\newtheorem{thm}{Theorem}[section]
\newtheorem{thm*}{Theorem}[]
\newtheorem{lem}[thm]{Lemma}
\newtheorem{prop}[thm]{Proposition}
\newtheorem{conj}[thm]{Conjecture}
\theoremstyle{definition}
\newtheorem{rem}[thm]{Remark}
\newtheorem{defi}[thm]{Definition}
\newtheorem{exa}[thm]{Example}
\newtheorem{Notation}[thm]{Notation}
\newcommand{\Irr}{\mathrm{Irr}}
\newcommand{\Res}{\mathrm{Res}}
\newcommand{\bG} {\mathbf G}
\newcommand{\bL} {\mathbf L}
\newcommand{\bH} {\mathbf H}
\newcommand{\bM} {\mathbf M}
\newcommand{\bT} {\mathbf T}
\newcommand{\bV} {\mathbf V}
\newcommand{\bN} {\mathbf N}
\newcommand{\cE} {\mathcal E}
\newcommand{\cO} {\mathcal O}
\newcommand{\la}{\la}
\let\la=\lambda
\newcommand{\FF}{{\mathbb{F}}}
\begin{document}

\title{Rationality of blocks of quasi-simple finite groups}
\date{\today}
\author{Niamh Farrell}
\address{Fachbereich Mathematik, TU Kaiserslautern, Postfach 3049, 67653 Kaiserslautern, Germany}
\email{farrell@mathematik.uni-kl.de}
\author{Radha Kessar}
\address{Department of Mathematics, City, University of London,
  Northampton Square, EC1V 0HB London UK}
\email{radha.kessar.1@city.ac.uk}

\begin{abstract}
Let $\ell $ be a prime number. We show that the  Morita Frobenius number of   an   $\ell$-block of a  quasi-simple finite group   is   at  most $4$  and that  the  strong  Frobenius number  is  at most   $4 |D|^2!$, where $D$ denotes a defect group of the block.  We deduce that  a  basic  algebra of any block  of   the group algebra  of a  quasi-simple finite  group over an algebraically closed field of characteristic $\ell$     is  defined over  a  field with  $\ell^a $  elements   for some  $ a \leq 4 $. We derive   consequences for Donovan's conjecture. In particular, we show that Donovan's conjecture holds for  $\ell$-blocks of   special linear groups.
\end{abstract}

\thanks{This article was  partly written while  the  authors were visiting the Mathematical Sciences Research Institute in Berkeley, 
California in Spring 2018 for  the  programme Group Representation Theory and Applications supported by the National Science Foundation under Grant No. DMS-1440140. It is a pleasure to thank the institute for its hospitality. \\
The first author also gratefully acknowledges financial support from the DFG project SFB-TRR 195} 

\maketitle

\section{Introduction and Main Results}
Let $\ell$ be a prime number  and  let $(K, \cO, k)$ be an $\ell$-modular system with $k \cong \overline \FF_\ell$  such that  $\cO  $  is absolutely  unramified. The following is  the  main  result of this paper (see Section~\ref{sec:prelims} for  notation and  definitions).

\begin{thm}\label{thm:newmaintheorem} 
Let $G$ be a quasi-simple finite group and let $B$ be a block  algebra of $\cO G$.  Let  $D$ be a defect group of $B$.
\begin{enumerate}[\rm(i)] 
	\item The Morita Frobenius numbers of $B$ and  of   $k\otimes_{\cO}B$ are at most $4$. 
	\item  The strong Frobenius number of $B$ is at most  $4|D|^2!$.
\end{enumerate} 
\end{thm}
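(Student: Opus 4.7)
The plan is to argue by reduction via the Classification of Finite Simple Groups. For alternating groups and their covers, the rationality of symmetric-group characters together with explicit analysis of the spin characters of $2.A_n$ controls the Morita Frobenius number. Sporadic simple groups and their covers contribute only finitely many cases, each handled by direct inspection of character tables and decomposition matrices. The substantive case is groups of Lie type.

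For a group of Lie type $G = \bG^F$ in defining characteristic, blocks of $\cO G$ have either maximal or zero defect, and their field of definition is immediate. In non-defining characteristic $\ell$, the strategy proceeds in three steps: (i) apply Bonnaf\'e--Rouquier Morita equivalences (and their $\ell$-modular refinements) to reduce an arbitrary block $B$ to a quasi-isolated block of a Levi subgroup; (ii) analyze the Galois action of $\mathrm{Gal}(\overline\FF_\ell/\FF_\ell)$ on the quasi-isolated block using Lusztig's Jordan decomposition of characters and the classification of quasi-isolated semisimple elements in the dual group $\bG^*$; (iii) conclude that four Frobenius twists suffice to return the block to its Morita equivalence class. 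The appearance of the constant $4$ is expected to stem from at worst biquadratic extensions of the prime field, arising from Frobenius eigenvalues on unipotent representations combined with the effect of outer (field and diagonal) automorphisms.

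For part (ii), once $B$ is known to be Morita equivalent to each of its first four Frobenius twists, a basic algebra $A$ of $B$ is classified, as an algebra over $\overline\FF_\ell$, by a Galois $1$-cocycle with values in $\mathrm{Aut}(A)$. Standard Cartan-matrix bounds yield $\dim_k A \leq |D|^2$ in the relevant range, whence $|\mathrm{Aut}(A)| \leq |D|^2!$ (viewing automorphisms as sitting inside the permutation group of a basis). A cohomological descent argument then shows that the cocycle trivializes over $\FF_{\ell^{4 \cdot |D|^2!}}$, giving the desired bound on the strong Frobenius number.

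The hardest part will be the case-by-case analysis of the Galois action on quasi-isolated blocks in exceptional groups of Lie type, namely types $E_6, E_7, E_8, F_4, G_2$ and their twisted analogues $\tw{2}E_6, \tw{2}F_4, \tw{2}B_2, \tw{2}G_2, \tw{3}D_4$. Here one must assemble explicit Frobenius-eigenvalue and Galois-action data for unipotent characters, appealing to the work of Lusztig and Geck. A secondary technical obstacle is ensuring that the Bonnaf\'e--Rouquier reductions carried out in step (i) are themselves compatible with (i.e.\ equivariant for) the Frobenius action, so that the reduction can be iterated while tracking the Galois structure.
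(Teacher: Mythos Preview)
Your overall CFSG strategy for part (i) is in line with the paper's approach, but your proposal misses a crucial strengthening that the paper establishes and then uses to deduce part (ii). The paper does not merely show that $B$ is Morita equivalent to its $\hat\sigma^r$-twist for some $r\le 4$; rather, it produces a finite group $N$ and a block $c$ of $\cO N$ such that $\cO Gb$ is Morita equivalent to $\cO Nc$ \emph{and} $\cO Nc \cong \cO N\,\hat\sigma^r(c)$ as $\cO$-algebras for some $r\le 4$. In other words, the Frobenius number $f(\cO Nc)$ (not just the Morita Frobenius number) is bounded by $4$. Part (ii) then follows immediately from the Eaton--Livesey inequality $\textit{sf}(\cO Nc)\le |D|^2!\cdot f(\cO Nc)$ together with the Morita invariance of $\textit{sf}$. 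Incidentally, the constant $4$ does not arise from Frobenius eigenvalues on unipotent characters as you suggest; it is the multiplicative order of $\ell$ modulo the order of an isolated semisimple element in $E_8^*$ (the worst case being an element of order $5$ with $\ell\equiv 2,3\pmod 5$).

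Your independent argument for part (ii) contains genuine errors. The claim $\dim_k A\le |D|^2$ for a basic algebra $A$ is not a ``standard Cartan-matrix bound'': Brauer--Feit gives $k(B)\le |D|^2$ for the number of ordinary irreducibles, and the Cartan entries satisfy $c_{ij}\le |D|$, but $\dim_k A=\sum_{i,j}c_{ij}$ can be considerably larger than $|D|^2$. Even granting the dimension bound, $k$-algebra automorphisms do not permute a fixed basis, so $|\mathrm{Aut}(A)|\le (\dim_k A)!$ is false in general (the automorphism group is typically an infinite algebraic group). Finally, the strong Frobenius number is a statement about an $\cO$-algebra isomorphism inducing the correct bijection on $\bar K$-characters; a cohomological descent argument over $k$ does not address this. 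The paper's route avoids all of this by working with actual $\cO$-algebra isomorphisms on the Bonnaf\'e--Dat--Rouquier side and invoking the Eaton--Livesey bound directly.
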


In many cases  in  the above theorem  we show that the  Morita Frobenius number  of $B$ is  in fact equal  to $1$ (see Theorem~\ref{thm:overallresult}),  and in no case do we show that the number is greater than $1$. We note that there are examples  of  blocks of $\ell$-solvable groups  with Morita Frobenius number equal to $2$ \cite{B/K}.    In \cite{F},  the  first author  calculated the Morita Frobenius  numbers of   $k \otimes_{\cO} B$    for  several  families of blocks  $B$   of  quasi-simple finite groups.   The main  outstanding  case,   which we treat   in the current paper,   was that  of non-unipotent blocks   of   quasi-simple finite  groups of Lie type  in non-describing characteristic.

The  motivation for  Theorem~\ref{thm:newmaintheorem}   comes from  Donovan's conjecture  whose statement we  recall. 

\begin{conj}[Donovan's Conjecture {\cite[Conjecture M]{Apaper}}]
Let $D$ be a finite $\ell$-group. There are finitely many Morita equivalence classes of blocks of finite group algebras over  $k$ with defect groups isomorphic to $D$.
\end{conj}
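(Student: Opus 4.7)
The plan is to derive Donovan's conjecture from Theorem~\ref{thm:newmaintheorem} by combining it with two further ingredients: (1) a reduction of the conjecture to blocks of quasi-simple finite groups, and (2) a uniform bound on the $k$-dimension of a basic algebra of such a block. The underlying principle is that over $k = \overline{\FF}_\ell$, the basic algebra of a block $B$ is determined by a finite-dimensional algebra defined over some finite subfield $\FF_{\ell^a}$; and for fixed $a$ and fixed dimension $d$, there are only finitely many isomorphism classes of $\FF_{\ell^a}$-algebras of dimension $d$. A bound on $a$ together with a bound on $d$ therefore forces finiteness of Morita equivalence classes of blocks with defect group $D$.

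First I would carry out the reduction to quasi-simple groups. Using Clifford-theoretic techniques (Fong--Reynolds, Külshammer--Puig covering of blocks) together with the known reduction theorems for Donovan's conjecture through normal subgroups and central extensions, any block $B$ of a finite group algebra with defect group $D$ is linked, up to Morita equivalence, to blocks of quasi-simple subquotients whose defect groups have order bounded in terms of $|D|$. I would verify that both the Morita Frobenius number and the dimension of a basic algebra behave well under these reductions, so that uniform bounds for quasi-simple blocks translate into uniform bounds in general.

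Second, I would apply Theorem~\ref{thm:newmaintheorem}(ii) to each of the resulting quasi-simple blocks: their strong Frobenius numbers are at most $4|D|^2!$, depending only on $|D|$. This controls the parameter $a$ above, and is precisely the content established in this paper.

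The main obstacle is the second ingredient: a uniform bound, depending only on $|D|$, on $\dim_k A$ for a basic algebra $A$ of a quasi-simple block with defect $D$. Equivalently, one needs uniform bounds on $l(B)$ and on the Cartan invariants of $B$, Brauer's estimate $k(B) \leq |D|$ already handling the number of ordinary characters. For quasi-simple blocks this requires a case-by-case analysis via the classification of finite simple groups: sporadic and alternating cases are accessible through explicit descriptions, groups of Lie type in defining characteristic can be approached through derived equivalences and the structure of unipotent blocks, and the non-defining characteristic case requires $e$-Harish-Chandra theory together with precise information on decomposition matrices. This last piece, in particular for groups of Lie type of unbounded rank, is where the principal obstruction lies and is not resolved by the present paper; the theorem proved here should rather be viewed as removing the rationality obstacle, reducing Donovan's conjecture to the remaining problem of bounding dimensions of basic algebras.
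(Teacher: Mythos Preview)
The statement you are attempting to prove is not a theorem of the paper; it is Donovan's Conjecture, which the paper states as an open problem and explicitly declares to be unresolved (``All three conjectures are open at present''). There is therefore no proof in the paper to compare your proposal against.

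Your proposal is also not a proof, as you yourself acknowledge in its final paragraph. Two essential ingredients are missing and are genuinely unknown. First, your reduction step (1) to quasi-simple groups is not available in the generality you need: the Eaton--Livesey reduction (Theorem~\ref{thm:ELred}) applies only to blocks with \emph{abelian} defect groups, and no analogous reduction to quasi-simple groups is known for arbitrary defect. The Clifford-theoretic machinery you invoke does not by itself give such a reduction for Morita equivalence classes. Second, the bound on the dimension of basic algebras (equivalently, the weak Donovan conjecture) for blocks of quasi-simple groups is not known in general; this is exactly the remaining open problem the paper isolates. Your proposal correctly identifies the contribution of Theorem~\ref{thm:newmaintheorem} as removing the rationality obstacle, but that is a reformulation of what the paper already says, not a proof of the conjecture.
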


A   weaker  version of Donovan's   conjecture states that the entries of the Cartan matrices of blocks with defect groups isomorphic to $D$ are bounded by some function which depends only on $|D|$.  The gap between  the weak and strong forms is precisely  a rationality question. This was first   observed  by Hiss \cite{H2}  and the theme was  developed in   \cite[Theorem 1.4]{K}. 

\begin{conj}[Rationality Conjecture {\cite[Conjecture 1.3]{K}}]\label{conj:rat} The Morita Frobenius numbers of blocks  of finite  group algebras  over $k$  with defect groups isomorphic to $D$ are bounded by a function which depends only on $|D|$.\end{conj}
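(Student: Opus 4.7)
The plan is to attack Conjecture~\ref{conj:rat} by using Theorem~\ref{thm:newmaintheorem}(i) as the base case and reducing the general statement to that of blocks of quasi-simple groups. Since the Morita Frobenius number is an invariant of the Morita equivalence class of the block algebra, every step of the reduction must be witnessed by a Morita equivalence (or a strictly finer relation that preserves the field of definition), which is more stringent than what is required for Donovan's conjecture itself.

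Concretely, I would proceed in two main steps. First, given a block $B$ of $kG$ with defect group $D$, apply a Fong--Reynolds type reduction with respect to $O_{\ell'}(G)$ to obtain a Morita equivalent block $\tilde B$ of some subgroup $\tilde G$ in which $O_{\ell'}(\tilde G)\leq Z(\tilde G)$ and whose defect group is still isomorphic to $D$. Second, let $b$ be the block of $k\,E(\tilde G)$ covered by $\tilde B$. Standard block-theoretic arguments should yield, up to Morita equivalence, a decomposition $b\sim b_1\stimes\cdots\stimes b_r$, where each $b_i$ is a block of a quasi-simple component $L_i$ of $E(\tilde G)$ and $r$ is bounded by a function of $|D|$ (each factor of non-trivial defect contributes an $\ell$-subgroup to $D$). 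By Theorem~\ref{thm:newmaintheorem}(i), every $b_i$ has Morita Frobenius number at most $4$, and hence $b$ has Morita Frobenius number at most $4^r$, a bound depending only on $|D|$.

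The principal obstacle is the final ascent from $b$ to $\tilde B$. Classical extensions of Clifford theory (K\"ulshammer, Puig) present $\tilde B$ as Morita equivalent to a twisted group algebra of the form $b\,\ast_\alpha\,(\tilde G/E(\tilde G)Z(\tilde G))$, where the quotient has order bounded in terms of $|D|$ and $\alpha$ is a certain $2$-cocycle. One must then show that the field of definition of such a twisted algebra exceeds that of $b$ by at most a function of $|D|$. This amounts to a rationality analysis of $\alpha$, and controlling $\alpha$ purely in terms of $|D|$ appears to be the genuine difficulty; it is precisely this cocycle obstruction that separates the weak form of Donovan's conjecture (on Cartan invariants) from the strong form, as highlighted by Hiss and in \cite{K}, and it is the reason why Theorem~\ref{thm:newmaintheorem}(i) does not immediately imply Conjecture~\ref{conj:rat} in full generality.
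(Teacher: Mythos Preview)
The statement you are addressing is Conjecture~\ref{conj:rat}, not a theorem, and the paper does not prove it. Immediately after stating it the authors write ``All three conjectures are open at present.'' Theorem~\ref{thm:newmaintheorem} establishes the conclusion of the conjecture only for blocks of quasi-simple groups; the paper does not claim, and does not contain, a reduction from arbitrary finite groups to quasi-simple ones for Morita Frobenius numbers. So there is no proof in the paper for you to be compared against.

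Your proposal is honest about this: you yourself identify the cocycle obstruction in the ascent from $b$ to $\tilde B$ as ``the genuine difficulty'' and note that it ``does not immediately imply Conjecture~\ref{conj:rat} in full generality.'' That is exactly the state of affairs. What you have written is not a proof but a plausible outline of where a reduction would have to go, together with a correct identification of the missing ingredient. In particular, the step presenting $\tilde B$ as a twisted group algebra over $b$ and then bounding the field of definition of the twist purely in terms of $|D|$ is not available in the literature and is not supplied here; without it the argument does not close. If you intend this as a proof, it has a gap at precisely the point you flagged; if you intend it as a discussion of strategy, it is reasonable but should not be labelled a proof.
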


All three  conjectures are open  at present.  The first part  of Theorem~\ref{thm:newmaintheorem} shows that the Rationality conjecture holds  for blocks of  quasi-simple  finite  groups.  

In \cite[Theorem 8.6]{H/K2}  it was shown that blocks of   finite special linear groups   satisfy  the weak Donovan conjecture.   Combining this result with the first part of Theorem~\ref{thm:newmaintheorem}  we obtain Donovan's conjecture    for  blocks of finite special linear groups. This result first appeared in {\cite[Theorem C]{Fthesis}}.

\begin{thm}  \label{thm:speciallinear} Let $D$ be a finite $\ell$-group. There are finitely many Morita equivalence classes of blocks of group algebras over $k$ of finite special linear groups with defect groups isomorphic to $D$.
\end{thm}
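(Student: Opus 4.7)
The plan is to derive Theorem~\ref{thm:speciallinear} as a direct consequence of Theorem~\ref{thm:newmaintheorem}(i) combined with the weak form of Donovan's conjecture for finite special linear groups proved in \cite[Theorem 8.6]{H/K2}. The reduction is an instance of the general principle, recorded in \cite[Theorem 1.4]{K}, that the weak Donovan conjecture together with the Rationality conjecture implies Donovan's conjecture for the family in question.

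First I would reduce to the case that $G = SL_n(q)$ is quasi-simple; the finitely many exceptional pairs $(n,q)$ for which $SL_n(q)$ fails to be quasi-simple contribute only finitely many Morita equivalence classes for any given $D$ and may simply be absorbed. For a block $B$ of $\cO G$ with defect group isomorphic to $D$, Theorem~\ref{thm:newmaintheorem}(i) ensures that a basic algebra $A$ of $k \otimes_{\cO} B$ is of the form $A \cong k \otimes_{\FF_{\ell^a}} A_0$ for some finite-dimensional $\FF_{\ell^a}$-algebra $A_0$ with $a \leq 4$. On the other hand, by \cite[Theorem 8.6]{H/K2} the entries of the Cartan matrix of $B$, and hence $\dim_k A$, are bounded by a function of $|D|$ alone.

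Since the prime $\ell$ is determined by the $\ell$-group $D$ and $a \leq 4$, the algebras $A_0$ range over finite-dimensional algebras of bounded dimension over a finite field of bounded size. There are only finitely many isomorphism classes of such $A_0$, hence only finitely many isomorphism classes of the corresponding basic algebras $A$ as $k$-algebras. Since Morita equivalence classes of block algebras of $k$-group algebras correspond bijectively to isomorphism classes of their basic algebras, one obtains only finitely many Morita equivalence classes of blocks of $kSL_n(q)$, ranging over all $n$ and $q$, with defect group isomorphic to $D$.

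The substantive content of the argument is packaged entirely into the two cited inputs: the weak Donovan conjecture for special linear groups in \cite{H/K2} and the Morita Frobenius bound of Theorem~\ref{thm:newmaintheorem}(i). Once both are in hand the formal counting argument above delivers Theorem~\ref{thm:speciallinear}; the only care needed is in dispatching the small non-quasi-simple exceptions and in noting that scalar extension $\FF_{\ell^a} \to k$ preserves isomorphism classes of finite-dimensional algebras, so the bound on the number of $A_0$ does transfer to a bound on the number of $A$.
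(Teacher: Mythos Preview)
Your proposal is correct and follows essentially the same route as the paper: the paper's proof is the one-line citation of Theorem~\ref{thm:newmaintheorem}(i), \cite[Theorem 8.6]{H/K2}, and \cite[Theorem~1.4]{K}, and what you have written simply unpacks the content of \cite[Theorem~1.4]{K} into the explicit basic-algebra counting argument, together with a harmless remark disposing of the non-quasi-simple $SL_n(q)$.
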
 

Motivation for the second part of Theorem~\ref{thm:newmaintheorem}  comes from a recent reduction of Donovan's conjecture for blocks with abelian defect, proved by Eaton and Livesey \cite{E/L}. 

\begin{thm} \label{thm:ELred} \cite[Theorem 1.4]{E/L}
Suppose that there exist functions $\epsilon, \gamma : \mathbb{N} \rightarrow \mathbb{N}$ such that for all block algebras $B$ of group algebras over $\cO $ of quasi-simple finite groups with abelian defect groups of order $\ell^d$, the strong Frobenius number of $B$ is bounded by $\epsilon (d)$ and all Cartan invariants of $ k \otimes_{\cO} B$  are at most $\gamma (d)$. Then Donovan's conjecture holds for   blocks with abelian defect groups.
\end{thm}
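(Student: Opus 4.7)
The plan is to prove this as a reduction theorem: assuming the hypothesized bounds on strong Frobenius numbers and Cartan invariants for blocks of quasi-simple finite groups with abelian defect of order $\ell^d$, deduce that there are only finitely many Morita equivalence classes of blocks of arbitrary finite group algebras with abelian defect group of this order. The strategy builds on existing reduction techniques for Donovan's conjecture (as developed by Düvel, Kessar, Puig, and others), with the strong Frobenius number playing the decisive role over $\cO$ that the ordinary Morita Frobenius number plays over $k$.

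First I would fix a finite $\ell$-group $D$ of order $\ell^d$ and consider any block $B$ of $\cO G$ with defect group isomorphic to $D$. Using standard Clifford-theoretic reductions --- Fong--Reynolds passage through normal subgroups of $\ell'$-index, elimination of central $\ell'$-subgroups, and K\"ulshammer--Puig source-algebra descent through normal subgroups --- I would reduce to the situation where $G$ has a characteristic subgroup $E$ which is a central product of quasi-simple groups and $B$ covers a single $G$-stable block $b$ of $E$. By Puig's theorem on blocks of central products, $b$ is source-algebra equivalent over $\cO$ to a tensor product of blocks of central extensions of quasi-simple groups, each with abelian defect of order dividing $\ell^d$, so the hypothesis applies to each tensor factor.

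Next I would apply the bounds. The Cartan invariant bound $\gamma(d)$ forces the dimension of a basic algebra of $k\otimes_\cO b$ to be bounded in terms of $d$, and hence the same holds for $k\otimes_\cO B$. The strong Frobenius number bound $\epsilon(d)$ forces a basic algebra of $b$ to be defined over a subring of $\cO$ whose residue field has at most $\ell^{\epsilon(d)}$ elements. Since there are only finitely many $\cO$-algebras of a given bounded dimension defined over a given bounded finite subring (up to $\cO$-algebra isomorphism), there are only finitely many source-algebra equivalence classes possible for $b$. A gluing/descent step along the extension $E \trianglelefteq G$ then upgrades this to finitely many Morita equivalence classes of $B$.

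The main obstacle is precisely this last gluing step, which is why the theorem requires bounds on the strong Frobenius number rather than merely the Morita Frobenius number. Lifting a Morita equivalence of the tensor-product block of the quasi-simple components to a Morita equivalence of $B$ itself involves a $2$-cocycle encoding the $G/E$-twist, and controlling this cocycle up to finite ambiguity requires tracking the Galois action on the block at the $\cO$-level. The strong Frobenius number governs exactly this Galois action; the ordinary Morita Frobenius number only gives control over $k$, which would at best yield the weak Donovan conjecture, not enough for the full finiteness statement.
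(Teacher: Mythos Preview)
The paper does not prove this theorem. Theorem~\ref{thm:ELred} is quoted verbatim as \cite[Theorem~1.4]{E/L} and is used as a black box; no argument for it appears anywhere in the paper. There is therefore nothing here to compare your proposal against.

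For what it is worth, your sketch does capture the broad shape of the Eaton--Livesey argument: a Clifford-theoretic reduction to a layer built from quasi-simple components, followed by a finiteness count in which the Cartan bound controls dimensions and the strong Frobenius bound controls the field of definition at the $\cO$-level. But several of the steps you describe are delicate in ways your outline glosses over. The reduction does not land directly on a central product of quasi-simple groups; one has to pass through the notion of a ``reduced pair'' and control how the block behaves under quotients by $O_\ell(G)$ and under passage to the generalised Fitting subgroup, and the abelian-defect hypothesis is used in an essential way to make that passage clean. The ``gluing/descent'' step you mention is also not a single cocycle argument but a chain of compatibility results about how strong Frobenius numbers behave under Morita equivalence, tensor products, and normal-subgroup extensions (this is the content of \cite[\S2--3]{E/L}). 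If you want to turn your outline into an actual proof you would need to supply those structural lemmas, not just invoke them; as written, the proposal is a plausible table of contents rather than a proof.
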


The second part of Theorem~\ref{thm:newmaintheorem}  in combination  with the  above reduction result  shows that  in order to prove Donovan's conjecture for  blocks with abelian defect groups, it  remains to show that the weak Donovan conjecture  holds for blocks of quasi-simple finite groups with abelian defect groups.  Note that unlike     Theorem \ref{thm:ELred}, the hypothesis of  \cite[Theorem 1.4]{E/L}    
does not  stipulate that $\cO$ is absolutely unramified.  We also note that   Eaton,  Eisele  and Livesey recently  strengthened the conclusion   of Theorem \ref{thm:ELred}  to  apply to  the  version of  Donovan's conjecture   dealing with Morita equivalence  classes of blocks  of finite group algebras over   ${\mathcal  \cO}$ \cite{E/E/L}.

Recall that a finite dimensional $k$-algebra $A$ is said to be defined over a subfield $F$ of $k$ if there exists an $F$-algebra $A_0$ such that $A \cong k \otimes_{F} A_0$. The first part of Theorem~\ref{thm:newmaintheorem} may be recast as the following rationality statement.

\begin{thm}\label{thm:sideshow}
Let $A$ be a basic algebra of a block algebra of $kG$ where $G$ is a quasi-simple finite group. Then $A$ is defined over $\mathbb{F}_{\ell^a}$  for some $a \leq 4 $. 
\end{thm}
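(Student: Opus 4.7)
The plan is to deduce Theorem~\ref{thm:sideshow} directly from Theorem~\ref{thm:newmaintheorem}(i) via a standard descent principle for basic algebras. Let $A$ be a basic algebra of a block $b$ of $kG$ for $G$ quasi-simple, and let $B$ be the block of $\cO G$ with $b \cong k\otimes_\cO B$. Since $A$ is Morita equivalent to $b$ and the Morita Frobenius number of a finite-dimensional $k$-algebra is a Morita invariant, Theorem~\ref{thm:newmaintheorem}(i) gives that the Morita Frobenius number $a$ of $A$ is at most $4$.

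By definition of the Morita Frobenius number, $A$ is Morita equivalent to its twist by the $a$-th power of the Frobenius automorphism of $k$. Because basic algebras that are Morita equivalent are in fact isomorphic as $k$-algebras, this Morita equivalence upgrades to an isomorphism of $A$ with its Frobenius twist, which is precisely the data of a ring automorphism of $A$ that is semilinear over $\mathbb{F}_{\ell^a}$.

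The final step invokes the standard Galois descent fact for basic finite-dimensional $k$-algebras: if such an algebra admits a ring automorphism that is semilinear over $\mathbb{F}_{\ell^a}$, then it is defined over $\mathbb{F}_{\ell^a}$. Equivalently, the Morita Frobenius number of a basic $k$-algebra equals the least $n$ for which the algebra is defined over $\mathbb{F}_{\ell^n}$. This descent statement is part of the foundational material on Morita Frobenius numbers of blocks (see e.g.\ \cite{F}, \cite{Fthesis}, \cite{K}), and applied to $A$ it yields that $A$ is defined over $\mathbb{F}_{\ell^a}$ with $a \leq 4$, as required.

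The substantive content of Theorem~\ref{thm:sideshow} is therefore contained entirely in Theorem~\ref{thm:newmaintheorem}(i); the deduction itself is formal, and no further obstacle arises. The only subtlety is to confirm that in translating from the Morita-invariant bound on the twist of $A$ to a genuine descent of $A$ to a subfield, one uses both features of the set-up, namely that $A$ is basic (so Morita equivalence is isomorphism) and that $k$ is the algebraic closure of $\mathbb{F}_\ell$ (so the fixed points of the semilinear automorphism give the desired $\mathbb{F}_{\ell^a}$-form).
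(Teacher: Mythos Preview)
Your proof is correct and follows essentially the same route as the paper: the paper's proof is the one-line ``This follows from the first part of Theorem~\ref{thm:newmaintheorem} and Lemma~\ref{lem:definedoverFl2}'', where Lemma~\ref{lem:definedoverFl2} is precisely the descent statement you spell out (Morita Frobenius number of a finite-dimensional $k$-algebra equals the least $m$ for which its basic algebra is defined over $\mathbb{F}_{\ell^m}$). Your argument simply unpacks that lemma into its constituent steps (basic $+$ Morita equivalent $\Rightarrow$ isomorphic, and isomorphic to Frobenius twist $\Rightarrow$ defined over subfield), which the paper records separately as Lemma~\ref{lem:definedoverFl}.
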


We begin in Section~\ref{sec:prelims} by recalling some preliminaries including background results on  rationality  and covering and dominating blocks. Section~\ref{sec:alggroups}  studies the effect of  $\ell$-adic  field automorphisms on characters  of finite reductive groups in  characteristic different from $\ell$. In Section~\ref{sec:BDR} we refine the results of the previous section  to the setting of the key reduction theorem of Bonnaf\'{e}-Dat-Rouquier \cite[Section 7]{B/D/R}. Theorems~\ref{prop:BDRresult} and \ref{prop:central} are then used to prove almost all of the results for  blocks of the finite groups of Lie type in non-defining characteristic  in  Section~\ref{sec:FGLT}; the  remaining  cases  in  type $E_8$ are dealt with on  an ad hoc basis  in Section~\ref{subsec:E8}. Sections~\ref{sec:reesuz}  and   \ref{sec:analogues}  cover  the case of  defining characteristic,  Ree and Suzuki groups, alternating and sporadic groups and exceptional covering groups. The proofs of  Theorem~\ref{thm:newmaintheorem},   Theorem~\ref{thm:speciallinear} and Theorem~\ref{thm:sideshow}  are given  in Section~\ref{sec:proof}.

\bigskip

\textbf{Acknowledgements.} We thank  Marc Cabanes and Fran\c{c}ois Digne for their help   in clearing up  a  point in the  proof of  Lemma~\ref{lem:duality}. We  also  thank  Gunter Malle  and the referee  for  their  careful reading  and corrections.

\section{Preliminaries} \label{sec:prelims}

\subsection{Twists through ring automorphisms}
Let $R$ be a commutative ring with identity and let $ \varphi: R \to R$ be a ring automorphism.  For an $R$-module $V$,  the $\varphi$-twist  $V^{\varphi} $ of $V$  is   the $R$-module  which equals $V$ as  group  and where scalar multiplication is given by $ \lambda\cdot v = \varphi^{-1}(\lambda) v $, for $ \lambda \in R$, $ v\in V$.  Denote by $R\textnormal{-Mod}$ the category of  $R$-modules   and by  $\varphi: R\textnormal{-Mod}  \to R\textnormal{-Mod} $ the functor which sends an object $V$ to $V^{\varphi}$ and is the identity on morphisms. Then $\varphi$ is an additive equivalence.

For  $A$ an $R$-algebra,  we denote by   $ A^{\varphi} $   the $R$-algebra   which is equal to  $ A^{\varphi}$ as  an $R$-module  and to $A$ as a ring.  The functor $\varphi: R\textnormal{-Mod}  \to  R\textnormal{-Mod} $  extends to   an  additive equivalence $ \varphi:  A\textnormal{-Mod}  \to A\textnormal{-Mod} $.  However,  the  $R$-algebras  $A$ and $A^{\varphi}$ are not necessarily Morita equivalent. This gives  rise to several  invariants, all of which can be thought of as a measure  of the rationality of $A$ with respect to $\varphi$.

\begin{defi} \label{defi:fnumberproto}  
	Let $R$ be a commutative unital ring and $ \varphi : R \to   R$ an automorphism. Let $A$ be an $R$-algebra.  \vspace{.5ex}
\begin{itemize}  
	\item  The \textit{Morita Frobenius number} \textit{mf}$(A)$  of $A$  with respect to  $\varphi $   is the least positive integer  $m$ (possibly infinity)    such that   $A^{\varphi^m} $ and $A$ are Morita equivalent as $R$-algebras.  \vspace{1ex}

	\item  The \textit{Frobenius number} $f(A)$ of $A$ with respect to $\varphi$  is the least positive integer  $m$ (possibly infinity)  such that $A^{\varphi^m} $ and $A$ are isomorphic as $R$-algebras. \vspace{1ex}

	\item  Let   $  R \subseteq    R'$   be an inclusion of unital  commutative rings and let $ \varphi': R' \to R' $ be  a ring automorphism  extending $\varphi$.  The   \textit{strong Frobenius number} \textit{sf}$(A)$ of $A$ with respect to $\varphi $ and $\varphi'$     is the least    positive integer $m$  (possibly infinity)  such that  there is an $R$-linear isomorphism  $\tau :   A \to   A^{\varphi^m}$  such that the equivalence of module categories induced by $\tau' :  R'\otimes_{R}  A \to  R'\otimes_{R}  A^{\varphi^m} $, the   unique  $R'$-linear extension of $\tau $, sends any simple $R' \otimes_R A$-module $V$  to   $ V^{\varphi'{^m}}\!$.
\end{itemize}
\end{defi} 
It is immediate from the  definitions that for any $\varphi$ and $\varphi'$, \textit{mf}$ (A) \leq f (A) \leq sf (A)   $.  The     use  of   ``Frobenius"  in the terminology goes back to \cite{B/K} where  Morita Frobenius and Frobenius numbers  were defined  for finite dimensional algebras  over fields of positive characteristic with $\varphi$ the standard Frobenius isomorphism (see the following subsections).   The concept of strong  Frobenius numbers   is due to Eaton and Livesey \cite{E/L1}; the related concept of $\varphi$-equivalence  appeared in \cite{C/K}. 

For finite group algebras  and their direct factors, $\varphi$-twists   have another convenient  interpretation. For $\varphi $ and $R$ as above    we denote   by $\varphi : RG \to RG$ the ring automorphism which sends an element $ \sum_{g \in G} \alpha_g g  $ of $RG$ to   $ \sum_{g \in G} \varphi (\alpha_g) g  $. Then  $\varphi^{-1} : RG \to  (RG)^{\varphi} $ is an  isomorphism  of $R$-algebras. Further, for any central idempotent $b$ of   $RG$,   $\varphi (b)$   is a central idempotent   of $RG$  and   $ \varphi^{-1}$  restricts to  an $R$-linear isomorphism    $RG \varphi(b) \cong  (RGb)^{\varphi}$.    

For any function $ \chi : G \to R$ we denote by $^{\varphi} \chi : G \to R$ the function defined by $^{\varphi} \chi(g) = \varphi(\chi(g))$, $ g\in G$.

\subsection{$\ell$-modular systems and Frobenius maps} 
Throughout this paper $\ell$ denotes a prime number. Let $(K, \cO, k)$  be   an $\ell$-modular system with $k \cong \overline \FF_\ell$ such that $\cO$ is absolutely  unramified, i.e., $ J(\cO)=\ell \cO$  (see \cite[Chapters 1,2]{Se} for generalities on complete discrete valuation rings).

Let $ \sigma : k \to k $   be  the   Frobenius automorphism  defined by $\lambda \mapsto \lambda^\ell$,   $\lambda \in k$.  Recall that  by the structure theory of  complete discrete valuation rings  there is a  unique   automorphism  $$ \hat \sigma:   \cO \to \cO $$    lifting the  Frobenius  automorphism $ \sigma $  of $k$.  The  map $\hat \sigma  $  extends uniquely to an automorphism of $K$. Let    $\bar  K $  be  a  fixed  algebraic  closure  of  $K$.  Since all $\ell'$-roots of unity in $\bar K$ belong to $K$, if   $\tau:  \bar  K  \to  \bar K$ is any automorphism of $\bar K$ extending  $\hat \sigma $  then  
$\tau (\zeta )= \zeta^{\ell} $   for every $\ell'$-root of unity  in  $\bar  K$.  The  following fact is well-known, we give a proof for the convenience of the reader.

\begin{lem} \label{lem:extendingfrob}   
	There exists a  field automorphism $\tau : \bar K \to \bar K$  extending  $\hat \sigma $ such that $\tau (\zeta) =\zeta $  for every root of unity $\zeta$ in $\bar K$  of $\ell$-power order. 
\end{lem}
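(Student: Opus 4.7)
The plan is to build $\tau$ in two stages: first construct compatible extensions $\tau_n$ of $\hat\sigma$ up the cyclotomic tower $K \subseteq K(\zeta_\ell) \subseteq K(\zeta_{\ell^2}) \subseteq \cdots$ with $\tau_n(\zeta_{\ell^n}) = \zeta_{\ell^n}$, and then invoke the standard result that any field automorphism lifts to an automorphism of a chosen algebraic closure.

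As a preliminary, $\hat\sigma$ fixes the prime subring $\ZZ \subseteq \cO$ pointwise (being a ring automorphism), so every cyclotomic polynomial $\Phi_{\ell^n}(x) \in \ZZ[x]$ is fixed coefficient-wise by $\hat\sigma$ and by its unique extension to $K$.

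The main obstacle I expect is showing that $\Phi_{\ell^n}$ remains irreducible over $K$. Because $\cO$ is absolutely unramified, $\ell$ is a uniformizer of $\cO$; a standard computation shows that $\Phi_{\ell^n}(x+1) \in \ZZ[x]$ is $\ell$-Eisenstein (leading coefficient $1$, all other coefficients divisible by $\ell$, constant term equal to $\ell$), hence irreducible in $K[x]$. So $[K(\zeta_{\ell^n}) : K] = \deg \Phi_{\ell^n} = \ell^{n-1}(\ell-1)$ and $K(\zeta_{\ell^n}) \cong K[x]/(\Phi_{\ell^n}(x))$. Applying $\hat\sigma$ to the $K$-coefficients while fixing the class of $x$ then defines an automorphism $\tau_n$ of $K(\zeta_{\ell^n})$ extending $\hat\sigma$ with $\tau_n(\zeta_{\ell^n}) = \zeta_{\ell^n}$.

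Choosing the $\zeta_{\ell^n}$ compatibly ($\zeta_{\ell^{n+1}}^\ell = \zeta_{\ell^n}$), the family $(\tau_n)$ is a directed system: $\tau_{n+1}(\zeta_{\ell^n}) = \tau_{n+1}(\zeta_{\ell^{n+1}})^\ell = \zeta_{\ell^n}$, so $\tau_{n+1}$ restricts to $\tau_n$ on $K(\zeta_{\ell^n})$. Its union is an automorphism $\tau_L$ of $L := \bigcup_n K(\zeta_{\ell^n})$ extending $\hat\sigma$ that fixes every $\ell$-power root of unity in $\bar K$ (all of which lie in $L$). Finally, a standard Zorn's lemma argument on pairs $(F, \sigma')$ with $L \subseteq F \subseteq \bar K$ and $\sigma'$ an automorphism of $F$ extending $\tau_L$ yields the desired $\tau \colon \bar K \to \bar K$.
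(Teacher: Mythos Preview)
Your argument is correct. Both your proof and the paper's follow the same overall architecture---construct the desired extension on the $\ell$-power cyclotomic tower $K_0 = \bigcup_n K(\zeta_{\ell^n})$, then extend to $\bar K$---but the key step is handled differently. You prove directly that $\Phi_{\ell^n}$ is Eisenstein at the uniformizer $\ell$ of $\cO$ (this is where the unramified hypothesis enters for you), so $K(\zeta_{\ell^n}) \cong K[x]/(\Phi_{\ell^n})$, and then define $\tau_n$ by acting with $\hat\sigma$ on coefficients; compatibility up the tower is checked by hand via $\zeta_{\ell^{n+1}}^\ell = \zeta_{\ell^n}$. The paper instead starts from an arbitrary extension of $\hat\sigma$ to $K[\zeta]$ and uses the fact (cited from \cite{K/L}) that $\mathrm{Gal}(K[\zeta]/K) \cong \mathrm{Aut}(\langle\zeta\rangle)$ when $\cO$ is unramified to correct this extension by a Galois element so that it fixes $\zeta$; uniqueness of the resulting extension then makes the Zorn's-lemma gluing automatic. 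The two key inputs are equivalent (irreducibility of $\Phi_{\ell^n}$ over $K$ is the same as the Galois group being all of $\mathrm{Aut}(\langle\zeta_{\ell^n}\rangle)$), but your Eisenstein route is more self-contained, while the paper's uniqueness-based gluing avoids the explicit compatibility check. One minor remark: in your final Zorn step it is cleaner to maximise over \emph{embeddings} $F \to \bar K$ rather than automorphisms of intermediate fields, and then observe that a maximal embedding $\bar K \to \bar K$ is surjective since its image is algebraically closed; the paper similarly leaves this step to the reader.
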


\begin{proof}
	Let  $\zeta \in \bar K$  be  a root of unity  of $\ell$-power order. We  claim that there is a  unique  extension $\tau: K[\zeta]  \to K[\zeta] $  of $\hat \sigma $ such that $\tau (\zeta)= \zeta $.  Indeed, let $\tau:   K[\zeta] \to K[\zeta] $ be any extension of $\hat \sigma $  and suppose that $\tau (\zeta)=  \zeta^i $. Since $\cO$ is unramified, $\textnormal{Gal}(K[\zeta]/K) \cong \textnormal{Aut}(\langle \zeta \rangle ) $ (see for instance \cite [Lemma 3.3]{K/L}).  Thus there exists  $\eta  \in   \textnormal{Gal} (K[\zeta]/K) $ such that $\eta(\zeta) = \zeta^i$. Replacing $\tau$  with  $\eta^{-1} \tau $ results in an  automorphism which  extends $\hat \sigma $ and sends $\zeta $ to itself.  The uniqueness assertion is obvious and the claim follows.  Now let $ K_0 \subset \bar K$ be the union of all subfields of the form   $K[\zeta]$, where $\zeta $ is  a root of unity  of $\ell$-power order.  Then by a standard Zorn's Lemma argument  we have that there is an extension  $\tau : K_0 \to K_0 $ of $\hat\sigma $ such that    $\tau(\zeta) =  \zeta $ for any root of unity of $\ell$-power order  in $K_0 $ and hence in  $\bar K$.  Now any extension of $\tau $ to $\bar K  $ has the desired property.    
\end{proof}

Henceforth,  we  fix an extension    
\[\hat \sigma : \bar K \to \bar K  \]
of $ \hat \sigma $ to $\bar K $  such that $\hat \sigma (\zeta) =\zeta $   for every   root of unity  $\zeta$  in $\bar K$  of $\ell$-power order.

\subsection{Blocks of finite groups and Frobenius numbers} Let  $G$ be a finite group.  By a block of $\cO G$ (respectively a block of $kG$) we mean a primitive idempotent in $Z(\cO G)$ (respectively $Z(kG)$). Recall that the canonical quotient map $\cO\to k$ extends to  a   surjective ring homomorphism   
\[\pi: \cO G \rightarrow k G\]
and $\pi $  induces a bijection  between the set of blocks of $\cO G$ and the set of blocks of $kG$.  Where there is no ambiguity, we will use the term block or $\ell$-block of $G$  to  refer to  either a  block of $\cO G$ or of $ kG$.

We denote by  $\textnormal{Irr}(G)$  the set of  irreducible $\bar  K$-valued characters of $G$  and for  $\chi \in \textnormal{Irr}(G)$, let $e_{\chi}$ denote the central primitive idempotent of $\bar KG$ corresponding to $\chi$.   For $b$ a block of $\cO G$, the set of characters belonging to $b$  or $\pi(b)$ is defined to be 
\[ \mbox{Irr}(b) = \mbox{Irr}(\pi(b))= \{ \chi \in \mbox{ Irr}(G)  \mid  b e_{\chi} = e_{\chi} \}.\]  

\begin{lem}[{\cite[Lemma 2.2]{F}}] \label{lem:charactergalois}
	Let $G$ be a finite group and let $b$  a block of $\cO G$. Then   $\pi(\hat{\sigma}(b)) = {\sigma(\pi(b))}$  and $\textnormal{Irr}(\hat \sigma(b)) = \{ ^{\hat\sigma}{\chi} \mid \chi \in \textnormal{Irr}(b)\}$.
\end{lem}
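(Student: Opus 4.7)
The plan is to establish the two equalities in turn, relying only on how $\hat\sigma$ lifts $\sigma$, on the explicit formula for the central primitive idempotent associated to an irreducible character, and on the elementary fact that $\hat\sigma$ fixes $\QQ$ (so all rational numbers appearing in these formulas are untouched).

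For the first equality I would expand $b = \sum_{g\in G}\alpha_g g$ with $\alpha_g \in \cO$. Applying $\hat\sigma$ coefficient-wise gives $\hat\sigma(b) = \sum_g \hat\sigma(\alpha_g) g$. The key compatibility is that $\hat\sigma$ lifts $\sigma$, so $\pi\circ\hat\sigma = \sigma\circ\pi$ on $\cO$; pushing through $\pi$ then yields
\[
\pi(\hat\sigma(b)) \;=\; \sum_{g\in G} \sigma(\pi(\alpha_g))\, g \;=\; \sigma(\pi(b)),
\]
which is the desired identity.

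For the second equality the crucial computation is the image under $\hat\sigma$ of the central primitive idempotent
\[
e_\chi \;=\; \frac{\chi(1)}{|G|}\sum_{g\in G}\chi(g^{-1})\,g
\]
of $\bar K G$. Since $\chi(1)/|G| \in \QQ$ is fixed by $\hat\sigma$, one obtains $\hat\sigma(e_\chi) = e_{{}^{\hat\sigma}\!\chi}$. In particular $\hat\sigma$ permutes the set $\{e_\chi : \chi \in \textnormal{Irr}(G)\}$, so ${}^{\hat\sigma}\!\chi$ is again an irreducible character of $G$. Applying $\hat\sigma$ to the defining relation $b e_\chi = e_\chi$ of $\chi \in \textnormal{Irr}(b)$ gives $\hat\sigma(b) e_{{}^{\hat\sigma}\!\chi} = e_{{}^{\hat\sigma}\!\chi}$, so ${}^{\hat\sigma}\!\chi \in \textnormal{Irr}(\hat\sigma(b))$; this proves the inclusion $\supseteq$. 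The reverse inclusion follows by applying the same argument to $\hat\sigma^{-1}$ and the block $\hat\sigma(b)$.

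There is no real obstacle here; the only point that needs a moment's care is to verify that $\chi(1)/|G|$ (and more generally, all rational prefactors of the idempotent formula) really are fixed by $\hat\sigma$, which holds because any ring automorphism of $\bar K$ fixes the prime subfield $\QQ$. Once that is noted, both assertions reduce to tracking coefficients through $\pi$ and through the idempotent formula, respectively.
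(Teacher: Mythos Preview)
Your argument is correct. The paper does not actually prove this lemma; it is quoted verbatim from \cite[Lemma 2.2]{F} without proof, so there is no ``paper's own proof'' to compare against. Your approach---checking $\pi\circ\hat\sigma=\sigma\circ\pi$ coefficient-wise for the first part, and tracking $\hat\sigma(e_\chi)=e_{{}^{\hat\sigma}\chi}$ via the explicit idempotent formula for the second---is the standard and expected one.
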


We  restate the definitions of the previous subsection  by making use of the   identification  of the  twist  of  a block algebra  with   another block    of   the same group algebra. The relevant  ring automorphisms are $\sigma $ and $\hat \sigma $.   Note that when we say that two block algebras  $\cO Gb $ and $\cO Nc$ are Morita equivalent (or isomorphic), we  mean     as $\cO$-algebras and similarly over $k$. 

\begin{defi}\label{defi:MF}
Let $b$ be a block of $\cO G$  and let $\cO Gb $  and $ kG\pi(b)= k\otimes_{\cO} \cO G b  $  be  the  corresponding block algebras.   

\begin{itemize} 
	
	\item The \textit{Morita Frobenius number} of $kG\pi(b)$, \textit{mf}$(kG\pi(b))$, is the minimal positive integer $m$ such that $k G \pi(b)$ is Morita equivalent to $k G \sigma^m (\pi(b))$.\vspace{.5ex}
	
	\item The \textit{Morita Frobenius number} of $\cO Gb $, \textit{mf}$(\cO  Gb )$, is the minimal positive integer $m$ such that $\cO G b$ is Morita equivalent to $\cO G\hat \sigma^m (b)$. \vspace{.5ex} 

	\item  The \textit{Frobenius number} of $kGb$, \textit{f}$( kGb )$,  is the minimal positive integer $m$ such that $k G \pi(b) \cong k G \sigma^m (\pi(b))$.  \vspace{.5ex}

	\item  The \textit{Frobenius number} of $\cO Gb $, \textit{f}$(\cO  Gb)$, is the minimal positive integer $m$ such that $\cO G b \cong \cO G \hat \sigma^m (b)$. \vspace{.5ex}

	\item  The \textit{strong Frobenius number} of $\cO  Gb$, \textit{sf}$(\cO  Gb )$, is the minimal positive integer $m$ such that there exists an $\cO$-algebra isomorphism from $\cO G b$ to $\cO G \hat \sigma^m (b)$ which, when extended to an isomorphism $\bar K G b \rightarrow \bar K G \hat \sigma^m(b)$, induces a bijection on characters given by $\chi \mapsto \,  ^{\hat \sigma{^m}}\!\chi $, for all $\chi \in $ Irr$(b)$.
\end{itemize}  
\end{defi}

The  first and third  definition above are equivalent to the definitions in the previous subsection with $R=k$, $\varphi=\sigma $ and $A=kG\pi(b)$. The second and fourth are equivalent to the definitions in the previous subsection with $R=\cO $, $\varphi=\hat \sigma $ and $A=\cO Gb$.   The  fifth definition  corresponds  to  the definition in the previous subsection with  $ R=\cO $, $R'=\bar K $,  $\varphi=\varphi'=\hat \sigma $, and $A=\cO Gb $. Note that the  isomorphism type of a simple $\bar  KG$-module  is determined  by   its   character  and  that  if $\chi $ is the character of the simple $\bar KG $-module $V$, then $^{\hat \sigma^m}\! \chi  $ is the  character  of  $V^{ \hat \sigma^m }  $ when regarded as  a  $\bar K G$-module   via pull  back  through the  map  $\hat\sigma^{m^{-1}}\!:  (\bar KG )^{\hat \sigma^m} \to  \bar KG  $.   

Since $\cO G$  has only  finitely many   blocks, $ \textit{f }\! (\cO  Gb) $  is   finite for any block $b$.  The first part of the following    proposition shows   that  the same is true for  the other numbers.

\begin{prop}\label{prop:ELresults}  
	Let $G$ and $N$  be finite groups. Let $b$ be a block of $\cO G$ and let $c$  be a block of  $\cO  N$.
	\begin{enumerate}[(i)]
		\item  $  \textit{mf }\! (kG\pi(b) ) \leq  \textit{mf }\! (\cO Gb) \leq  \textit{f }\!(\cO  Gb) \leq  \textit{sf }\!(\cO G b) \leq |D|^2! \, \textit{f }\!(\cO Gb)$. 
		\item  If $\cO Gb $ and $\cO  Nc $ are  Morita equivalent, then $\textit{mf }\!(\cO Gb) = \textit{mf }\!(\cO  Nc)  $. If  $kG\pi(b )$ and $kN\pi(c)  $ are  Morita equivalent, then $\textit{mf }\!( kG\pi(b) ) = \textit{mf }\!(kN \pi(c))  $.
		\item  If $\cO Gb $ and $\cO  Nc $ are  Morita equivalent,   then $\textit{sf }\!(\cO Gb) =  \textit{sf }\!(\cO Nc)$.
	\end{enumerate}
\end{prop}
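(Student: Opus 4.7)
The finiteness of $\textit{f}(\cO Gb)$ is clear since $\hat\sigma$ permutes the finite block set of $\cO G$, so some power of $\hat\sigma$ fixes $b$. The inequalities $\textit{mf}(\cO Gb) \leq \textit{f}(\cO Gb) \leq \textit{sf}(\cO Gb)$ are immediate from Definition~\ref{defi:MF}: an $\cO$-algebra isomorphism yields a Morita equivalence, and the strong Frobenius condition is an isomorphism together with an extra character-theoretic constraint. The bound $\textit{mf}(kG\pi(b)) \leq \textit{mf}(\cO Gb)$ follows by applying the base-change functor $k \otimes_{\cO} (-)$ to a Morita equivalence $\cO Gb \sim \cO G \hat\sigma^m(b)$ with $m = \textit{mf}(\cO Gb)$, together with the identity $\pi(\hat\sigma^m(b)) = \sigma^m(\pi(b))$ from Lemma~\ref{lem:charactergalois}.

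The substantive step in (i) is the bound $\textit{sf}(\cO Gb) \leq |D|^2! \cdot \textit{f}(\cO Gb)$. Set $F = \textit{f}(\cO Gb)$ and fix an $\cO$-algebra isomorphism $\tau : \cO Gb \to \cO G \hat\sigma^F(b)$. Define inductively $\tau^{(1)} = \tau$ and $\tau^{(j+1)} = \tau_j \circ \tau^{(j)}$, where $\tau_j : \cO G \hat\sigma^{jF}(b) \to \cO G \hat\sigma^{(j+1)F}(b)$ is the $\cO$-algebra isomorphism $y \mapsto \hat\sigma^{jF}(\tau(\hat\sigma^{-jF}(y)))$ ($\cO$-linearity is a direct check against the semilinearity of $\hat\sigma^{jF}$). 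Each $\tau^{(j)}$ induces a bijection $(\tau^{(j)})^{\ast} : \Irr(\hat\sigma^{jF}(b)) \to \Irr(b)$ by pullback of characters, and composing with the Galois bijection ${}^{\hat\sigma^{jF}} : \Irr(b) \to \Irr(\hat\sigma^{jF}(b))$ from Lemma~\ref{lem:charactergalois} yields a permutation $\pi_j$ of $\Irr(b)$; a direct computation shows $\pi_j = \pi_1^j$. The Brauer--Feit bound gives $|\Irr(b)| \leq \tfrac{1}{4}|D|^2 + 1 \leq |D|^2$, so $\pi_1$ has order at most $|D|^2!$ in the symmetric group on $\Irr(b)$. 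Hence $\pi_1^j = \mathrm{id}$ for some $j \leq |D|^2!$, and this $\tau^{(j)}$ realizes the strong Frobenius condition, giving the claim.

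For (ii) and (iii), the key observation is that the $\hat\sigma^m$-twist functor preserves Morita equivalences: if a bimodule $M$ gives a Morita equivalence $\cO Gb \sim \cO Nc$, then $M^{\hat\sigma^m}$ gives one between $(\cO Gb)^{\hat\sigma^m}$ and $(\cO Nc)^{\hat\sigma^m}$, which are in turn $\cO$-algebra isomorphic to $\cO G \hat\sigma^m(b)$ and $\cO N \hat\sigma^m(c)$ via the map $\hat\sigma^{-m}$ recalled in Section~\ref{sec:prelims}. Transitivity of Morita equivalence then gives (ii); the analogous argument with $k$ replacing $\cO$ handles the second assertion of (ii). For (iii) one additionally verifies that the character bijection $\Irr(c) \to \Irr(b)$ induced by $M \otimes_{\cO Nc} (-)$ intertwines the Galois action $\chi \mapsto {}^{\hat\sigma^m}\chi$, so that an isomorphism $\tau : \cO Gb \to \cO G \hat\sigma^m(b)$ realizing the strong Frobenius condition transports through $M$ to an isomorphism $\cO Nc \to \cO N \hat\sigma^m(c)$ with the analogous property.

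The main obstacles are the identity $\pi_j = \pi_1^j$ in (i) and the Galois--character compatibility in (iii); both amount to unwinding the definitions, and the latter is built into the Eaton--Livesey formulation of the strong Frobenius number in \cite{E/L1}.
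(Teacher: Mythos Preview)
Your approach matches the paper's: the easy inequalities in (i) and the argument for (ii) are handled identically (twisting the bimodule), while for the bound $\textit{sf}\leq |D|^2!\cdot\textit{f}$ in (i) and for (iii) the paper simply cites \cite[Proposition~2.3]{E/L}, whereas you sketch the underlying Eaton--Livesey arguments (the Brauer--Feit bound and symmetric-group order for (i), Galois-compatibility of the character bijection for (iii)).

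One caution about your sketch of (iii): a Morita equivalence does not literally transport an $\cO$-algebra \emph{isomorphism} $\tau:\cO Gb\to\cO G\hat\sigma^m(b)$ to an $\cO$-algebra isomorphism $\cO Nc\to\cO N\hat\sigma^m(c)$. Composing $M$, $\tau$ (viewed as a bimodule), and the inverse of $M^{\hat\sigma^m}$ yields only a Morita equivalence $\cO Nc\sim\cO N\hat\sigma^m(c)$ with the correct character bijection, not an isomorphism; and the definition of $\textit{sf}$ in this paper demands an isomorphism. Upgrading this to an honest isomorphism with the required character property is the substantive content of the Eaton--Livesey result you invoke at the end, so your deferral to \cite{E/L1} is essential rather than cosmetic. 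With that understood, your proof is correct and coincides with the paper's, just with the cited steps partly unpacked.
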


\begin{proof}
The first two inequalities in  part (i)  follow directly from the definitions and the last two inequalities are \cite[Proposition 2.3 (i)]{E/L}. If $M$  is  an $\cO  Gb \otimes _{\cO} (\cO Nc)^{op} $-module,  then $M^{\sigma^m}$ is  an $(\cO  Gb)^{\hat \sigma^m}  \otimes_{\cO}  ((\cO Nc)^{op})^{\hat \sigma^m}  \cong (\cO  Gb \otimes_{\cO}  (\cO Nc)^{op} )^{\hat \sigma^m}  $-module and  $M$ induces a Morita  equivalence between  $\cO Gb$  and $\cO N c $ if and only if  $M^{\hat \sigma^m}$  induces a Morita equivalence between $(\cO Gb)^{\hat \sigma^m} $  and $ (\cO N c)^{\hat \sigma^m}  $.  This proves part (ii) over $\cO$. The proof  over $k$ is identical. Part (iii) is \cite[Proposition 2.3 (ii)]{E/L}. 
\end{proof}

We will make repeated use of the following result of Linckelmann  \cite[Theorem 1 and Proposition 2]{Li}.

\begin{prop} \label{prop:cyclicklein} Let $ G$ be a finite group and let $ b$ be a block of $\cO G$. If the  defect groups of $\cO Gb $ are cyclic, or if $\ell=2 $ and the defect groups of $\cO Gb$ are  Klein four groups, then $\textit{sf }\!(\cO Gb ) =1 $.  
\end{prop}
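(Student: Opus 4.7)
The plan is to reduce to a basic algebra using the Morita invariance of the strong Frobenius number, and then to appeal directly to Linckelmann's explicit classification of the two families in question.

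First, by the Morita invariance of the strong Frobenius number (Proposition \ref{prop:ELresults}(iii), together with its extension to general $\cO$-algebras that follows from the same $\cO$-linear bimodule argument as for blocks), it suffices to prove $\textit{sf}(A) = 1$ for a basic algebra $A$ of $\cO Gb$.

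Next I would invoke Linckelmann's classification. In the cyclic defect case, \cite[Theorem 1]{Li} identifies $A$ with an explicit Brauer tree algebra determined by combinatorial data attached to $b$; in the Klein four case at $\ell = 2$, \cite[Proposition 2]{Li} provides a corresponding explicit description rooted in Erdmann's classification of tame blocks. What matters here is that in both situations $A$ admits a quiver-and-relations presentation whose structure constants lie in a subring $R \subseteq \cO$ fixed pointwise by $\hat\sigma$: concretely, $R$ can be taken to be the $\ZZ_\ell$-subalgebra generated by the relevant $\ell$-power roots of unity, which by Lemma \ref{lem:extendingfrob} is fixed by $\hat\sigma$. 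Hence $A \cong \cO \otimes_R A_0$ for an $R$-order $A_0$, and the map $\lambda \otimes a_0 \mapsto \hat\sigma^{-1}(\lambda) \otimes a_0$ furnishes an $\cO$-algebra isomorphism $\tau \colon A \to A^{\hat\sigma}$.

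The step I expect to require the most care is verifying that, after extending scalars to $\bar K$, the isomorphism $\tau$ sends every simple $\bar K \otimes_\cO A$-module $V$ to $V^{\hat\sigma}$. The permutation of simples induced by $\tau$ is governed by the Galois action of $\hat\sigma$ on coefficients relative to the $R$-structure of $A_0$; on the other hand, by Lemma \ref{lem:extendingfrob}, $\hat\sigma$ fixes $\ell$-power roots of unity and raises $\ell'$-roots of unity to the $\ell$-th power, which is precisely the action that controls the $\hat\sigma$-conjugate of an ordinary character. Matching the combinatorial labels of simples — edges of the Brauer tree together with the cycle of exceptional multiplicities in the cyclic case, respectively the explicit description of the four ordinary characters in the Klein four case — against $\hat\sigma$-conjugacy of characters is exactly the content of Linckelmann's detailed analysis in \cite{Li}, and this is where the proof rests.
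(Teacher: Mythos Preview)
The paper gives no proof at all: the proposition is simply stated as a direct citation of Linckelmann \cite[Theorem~1 and Proposition~2]{Li}. So there is nothing to compare your argument against on the paper's side beyond the attribution.

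Your sketch is in the right spirit, but it is somewhat circular and has a couple of soft spots. First, Proposition~\ref{prop:ELresults}(iii) is stated and proved only for blocks of finite group algebras, and you need it for a basic algebra $A$ that is not of this form; the extension is plausible but is not in the paper, and the definition of $\textit{sf}$ you are using (Definition~\ref{defi:MF}) is phrased specifically in terms of characters of $G$, so you would first have to reformulate it intrinsically before transporting it along a Morita equivalence. Second, and more seriously, the crucial step---that the isomorphism $\tau$ you build from an $R$-form of $A$ induces on simple $\bar K\otimes_{\cO}A$-modules exactly the permutation $V\mapsto V^{\hat\sigma}$---is precisely the content you defer back to \cite{Li}. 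Since that is the whole point of the strong Frobenius number, your argument in the end amounts to ``this is in Linckelmann,'' which is what the paper says in one line. If you want a self-contained account, you would need to actually carry out the matching of exceptional characters under $\hat\sigma$ in the cyclic case (and the analogous check for Klein four), rather than asserting that Linckelmann's analysis does it.
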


\subsection{Morita Frobenius numbers and basic algebras of blocks of $kG$.}

\begin{lem}[{\cite[Lemma 2.1]{K}}]\label{lem:definedoverFl}  
	Let $A$  be  a finite dimensional  $k$-algebra. Then $A$ is defined over $\FF_{\ell^m}$ if and only if $A \cong A^{\sigma^m}$.
\end{lem}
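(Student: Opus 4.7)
The plan is to treat the two implications separately. The forward direction is a direct tensor-product computation, while the converse rests on a Galois descent argument for $\GL_n$, which is the main technical step.

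For the $(\Rightarrow)$ direction, suppose there is an $\FF_{\ell^m}$-algebra $A_0$ with $A\cong k\otimes_{\FF_{\ell^m}}A_0$. Since $\sigma^m$ fixes $\FF_{\ell^m}$ pointwise, the map $\sigma^m\otimes\mathrm{id}_{A_0}$ is a well-defined ring automorphism of $k\otimes_{\FF_{\ell^m}}A_0$ which is $\sigma^m$-semi-linear over $k$. As explained in Section~\ref{sec:prelims}, such a semi-linear ring automorphism is the same datum as a $k$-algebra isomorphism $A\cong A^{\sigma^m}$.

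For the $(\Leftarrow)$ direction, a $k$-algebra isomorphism $A\cong A^{\sigma^m}$ translates, on underlying rings, into a ring automorphism $\psi$ of $A$ satisfying $\psi(\lambda a)=\sigma^m(\lambda)\psi(a)$ for all $\lambda\in k$ and $a\in A$ (possibly after replacing $\psi$ by its inverse to fix the sign convention). Let $A_0:=A^{\psi}$ denote the subring of $\psi$-fixed elements; since $\sigma^m$ fixes $\FF_{\ell^m}$, this is an $\FF_{\ell^m}$-subalgebra of $A$, and the multiplication map
\[
k\otimes_{\FF_{\ell^m}}A_0 \longrightarrow A,\qquad \lambda\otimes a\mapsto \lambda a,
\]
is a $k$-algebra homomorphism. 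The lemma therefore reduces to showing that $A^{\psi}$ has $\FF_{\ell^m}$-dimension equal to $n:=\dim_k A$ and $k$-spans $A$.

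To obtain this, I would fix a $k$-basis of $A$ and let $M\in \GL_n(k)$ be the matrix of $\psi$ in that basis, so that $\psi$ acts on coordinate columns by $c\mapsto M\sigma^m(c)$. Changing basis by $h\in \GL_n(k)$ replaces $M$ by $h^{-1}M\sigma^m(h)$, so it suffices to solve the equation $M=h\sigma^m(h)^{-1}$. This is precisely what the Lang--Steinberg theorem, applied to the connected algebraic group $\GL_n$ over $\FF_{\ell^m}$ with Frobenius $\sigma^m$, delivers; equivalently, since the entries of $M$ lie in some finite subfield of $k$, one may invoke the vanishing of $H^1(\mathrm{Gal}(\FF_{\ell^N}/\FF_{\ell^m}),\GL_n(\FF_{\ell^N}))$ for $N$ a sufficiently large multiple of $m$. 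In the resulting basis $\psi$ acts coordinate-wise as $\sigma^m$, so $A^{\psi}$ equals the $\FF_{\ell^m}$-span of this basis, of $\FF_{\ell^m}$-dimension $n$ and spanning $A$ over $k$, as required. The main obstacle is precisely this descent step: showing merely that $A^{\psi}\neq 0$ is easy, but the sharp statement that it attains the maximal $\FF_{\ell^m}$-dimension $n$ is where the Hilbert~90 vanishing for $\GL_n$ (equivalently, Lang--Steinberg) is essential.
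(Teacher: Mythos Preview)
The paper does not give its own proof of this lemma; it is stated with a citation to \cite[Lemma~2.1]{K} and used as a black box. Your argument is therefore being compared against the standard proof of this descent statement, and it is correct. The forward direction is routine, and for the converse your reduction to solving $M=h\,\sigma^m(h)^{-1}$ in $\GL_n(k)$ and appeal to Lang--Steinberg (connectedness of $\GL_n$, $k=\overline{\FF}_\ell$) is exactly the right mechanism; the conclusion that in the new basis $\psi$ acts as coordinate-wise $\sigma^m$, so that $A^{\psi}$ is an $\FF_{\ell^m}$-form, is clean.

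One minor remark: your parenthetical ``equivalently, \ldots\ vanishing of $H^1(\mathrm{Gal}(\FF_{\ell^N}/\FF_{\ell^m}),\GL_n(\FF_{\ell^N}))$'' is slightly loose. A bare $\sigma^m$-semilinear automorphism $\psi$ does not a priori satisfy the cocycle (norm) condition over any finite layer $\FF_{\ell^N}$, so Hilbert~90 for a finite cyclic extension does not apply directly. The Lang--Steinberg version you state first is the correct tool: it gives a solution $h\in\GL_n(\overline{\FF}_\ell)$ for \emph{any} $M$, with no cocycle hypothesis. Since your main line of argument uses Lang--Steinberg, this does not affect the validity of the proof.
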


Two finite dimensional   $k$-algebras are Morita equivalent if and only if their basic algebras are isomorphic (see  \cite[Section 4.9]{L2}  for generalities on basic algebras).   If   $A_0$ is a basic algebra of a finite dimensional $k$-algebra $A$, then $ A_0^{\sigma^m} $ is a basic algebra   of $A^{\sigma^m}$.   We thus obtain the following.

\begin{lem}\label{lem:definedoverFl2}  
	Let  $A$  be a finite  dimensional  $k$-algebra. The Morita Frobenius number of $A$ is the least positive integer $m$ such that the basic algebras of $A$ are defined over $\FF_{\ell^m}$.
\end{lem}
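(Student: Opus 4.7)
The plan is to chain together three equivalences, all of which are either already stated in the excerpt or stated as facts in the text just before the lemma. Write $m_0 = \textit{mf}(A)$ for the Morita Frobenius number, let $A_0$ be a basic algebra of $A$, and let $m_1$ denote the least positive integer such that $A_0$ is defined over $\FF_{\ell^{m_1}}$. The goal is to show $m_0 = m_1$ by showing that for every positive integer $m$, the conditions
\begin{enumerate}[\rm(a)]
\item $A$ is Morita equivalent to $A^{\sigma^m}$,
\item $A_0 \cong A_0^{\sigma^m}$ as $k$-algebras,
\item $A_0$ is defined over $\FF_{\ell^m}$,
\end{enumerate}
are equivalent. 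This immediately forces $m_0=m_1$ since the set of $m$ satisfying (a) and the set of $m$ satisfying (c) coincide.

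For (a)$\Leftrightarrow$(b) I would invoke the fact, recalled explicitly in the paragraph preceding the lemma, that two finite-dimensional $k$-algebras are Morita equivalent iff their basic algebras are isomorphic, together with the observation from the same paragraph that $A_0^{\sigma^m}$ is a basic algebra of $A^{\sigma^m}$. For (b)$\Leftrightarrow$(c) I would apply Lemma~\ref{lem:definedoverFl} to the algebra $A_0$ in place of $A$.

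The only thing to be a bit careful about is that the Morita Frobenius number and $m_1$ are both \emph{least} positive integers, so I should spell out that the equivalence (a)$\Leftrightarrow$(c) for every $m\geq 1$ makes the two sets of ``good'' integers equal, hence their minima coincide. There is no real obstacle; the proof is essentially a bookkeeping exercise that packages Lemma~\ref{lem:definedoverFl} together with the standard Morita-theoretic characterisation of basic algebras and the functoriality of $(-)^{\sigma^m}$ on $k$-algebras. I expect the write-up to be only a few lines.
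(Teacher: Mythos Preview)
Your proposal is correct and follows exactly the reasoning the paper intends: the paper gives no separate proof but simply records the two facts you use (basic algebras detect Morita equivalence, and $A_0^{\sigma^m}$ is basic for $A^{\sigma^m}$) in the paragraph preceding the lemma and then writes ``We thus obtain the following.'' Your chain (a)$\Leftrightarrow$(b)$\Leftrightarrow$(c) together with Lemma~\ref{lem:definedoverFl} is precisely that argument made explicit.
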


\subsection{Covering and dominating blocks.}   
For $\chi \in \textnormal{Irr}(G)$ we let $b(\chi)$ denote the block of $\cO G$ containing $\chi$.   If $N \lhd G$ then for any $\theta \in \textnormal{Irr}(N)$ and any $\chi \in \textnormal{Irr}(G)$ we use the following notation for the set of irreducible characters of $G$ covering $\theta$, and the set of irreducible characters of $N$ covered by $\chi$, respectively. 
\[ \textnormal{Irr}(G \,|\, \theta) = \{ \psi \in \textnormal{Irr}(G) :\theta \textnormal{ is an irreducible constituent of } \psi_{N} \} \]
\[ \textnormal{Irr}(N \, |\, \chi) = \{ \psi \in \textnormal{Irr}(N) :\psi \textnormal{ is an irreducible constituent of } \chi_{N} \} \]

\begin{lem}
	\label{lem:coveringsameblock}
	Suppose $N \lhd G$ are finite groups such that $G/N$ is abelian and let $
	\theta \in \textnormal{Irr}(N)$. Then for any pair of characters $\chi_1, \chi_2 \in \textnormal{Irr}(G \,|\, \theta)$, there exists a linear character $\eta \in $ \textnormal{Irr}$(G/N)$ such that $\chi_2 =  \chi_1\eta.$ Moreover, $\cO G b(\chi_1) \cong \cO G b(\chi_2)$ as $\cO$-algebras.
\end{lem}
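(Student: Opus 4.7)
The plan is to handle the two assertions in order. For the existence of $\eta$, I view the group $\textnormal{Irr}(G/N)$ of linear characters, inflated to $G$, as acting on $\textnormal{Irr}(G)$ by multiplication. Since every such $\eta$ is linear, $\chi_1\eta$ is irreducible, and because $(\chi_1\eta)|_N=\chi_1|_N$ it still contains $\theta$. So the orbit of $\chi_1$ lies inside $\textnormal{Irr}(G \,|\, \theta)$, and the task reduces to showing $\chi_2$ lies in this orbit.

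To establish transitivity I would compute
\[
\sum_{\eta \in \textnormal{Irr}(G/N)} \langle \chi_1\eta, \chi_2\rangle_G.
\]
Using that $\sum_{\eta \in \textnormal{Irr}(G/N)}\eta$, inflated to $G$, agrees with $\textnormal{Ind}_N^G(1_N)$, the sum equals $\langle \chi_1 \cdot \textnormal{Ind}_N^G(1_N),\, \chi_2\rangle_G$. The projection formula rewrites this as $\langle \textnormal{Ind}_N^G(\chi_1|_N),\, \chi_2\rangle_G$, and Frobenius reciprocity then gives $\langle \chi_1|_N, \chi_2|_N\rangle_N$. As both $\chi_i|_N$ contain $\theta$ as an irreducible constituent, this inner product is strictly positive. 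Because each summand on the left is $0$ or $1$, some $\eta\in\textnormal{Irr}(G/N)$ must satisfy $\chi_1\eta = \chi_2$.

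For the second assertion, I fix such an $\eta$ and define the $\cO$-linear extension of $g \mapsto \eta(g)g$ to a map $\phi : \cO G \to \cO G$. A direct calculation shows $\phi$ is an $\cO$-algebra automorphism. Extending scalars to $\bar K G$, comparison of the explicit formulas
\[
e_\chi \;=\; \frac{\chi(1)}{|G|}\sum_{g\in G}\chi(g^{-1})\,g
\]
for central primitive idempotents shows $\phi(e_\chi) = e_{\chi\bar\eta}$ for every $\chi \in \textnormal{Irr}(G)$; in particular $\phi(e_{\chi_2}) = e_{\chi_1}$. Because $\phi$ is a ring automorphism, $\phi(b(\chi_2))$ is again a primitive central idempotent of $\cO G$, and since it contains $e_{\chi_1}$ as a summand it must equal $b(\chi_1)$. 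Restricting $\phi$ to $\cO G\, b(\chi_2)$ then produces the desired $\cO$-algebra isomorphism onto $\cO G\, b(\chi_1)$.

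The main obstacle is the transitivity step: getting the Frobenius reciprocity / projection formula manipulation set up with the correct normalisations so that positivity follows from the common constituent $\theta$. Once $\eta$ is in hand, the construction of $\phi$ and the identification of $\phi(b(\chi_2))$ with $b(\chi_1)$ are essentially formal.
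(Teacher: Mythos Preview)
Your argument for the first assertion is correct and is in fact a standard direct proof of the result the paper simply cites.

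There is, however, a genuine gap in the second part. For the map $\phi:g\mapsto\eta(g)g$ to be an $\cO$-algebra endomorphism of $\cO G$, you need $\eta(g)\in\cO$ for every $g\in G$. Since $\cO$ is absolutely unramified, the roots of unity it contains are precisely those of $\ell'$-order (together with $-1$ when $\ell=2$); already a primitive $\ell$-th root of unity generates a ramified extension of $\QQ_\ell$. Nothing in the hypotheses forces $G/N$ to be an $\ell'$-group, so $\eta$ may well take values outside $\cO$, and then your $\phi$ does not send $\cO G$ to $\cO G$. Your idempotent calculation is still valid after extending scalars to $\bar K$, but that only yields a $\bar K$-algebra isomorphism, not the required $\cO$-algebra isomorphism.

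The paper handles exactly this point by replacing $\eta$ with its $\ell'$-part $\eta'$. Then $\eta'$ takes values in $\cO^\times$, so the twist $g\mapsto \eta'(g^{-1})g$ is a genuine $\cO$-algebra automorphism of $\cO G$, carrying $b(\chi_1)$ to $b(\chi_1\eta')$. The extra step is to show $b(\chi_1\eta')=b(\chi_1\eta)=b(\chi_2)$: this holds because $\eta$ and $\eta'$ agree on the $\ell'$-elements of $G$, and block membership of a character is determined by its values on $\ell'$-classes. Inserting this modification repairs your argument.
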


\begin{proof}
	The first part follows from \cite[Lemma 13.21]{D/M}. To show the second part, let $\eta \in \Irr(G/N)$ such that $\chi_2 = \chi_1 \eta$, and let $\eta'$ denote the $\ell'$-part  of $\eta $. Then $\eta' $ takes values in $\cO^{\times} $  and  the  map given by 
	\[\sum_{g \in G} \alpha_g g  \mapsto  \sum_{g \in G} \alpha_g g \eta\left(g^{-1}\right), \  \  \sum_{g \in G} \alpha_g g  \in \cO G  \] 
	is an $\cO$-algebra automorphism of $\cO G$ which restricts  to  an isomorphism between $\cO G b(\chi_1)$  and  $\cO G b(\chi_1 \eta')$.  Since $ \eta  $ and $\eta'$ agree on $\ell'$ elements of $G$, 
	\[ b (\chi_2) = b (\chi_1\eta) = b (\chi_1 \eta' ) \]  proving the second assertion.
\end{proof}

Recall that  if $ Z $ is  a normal subgroup  of a finite group  $G$,  then a   block   $\bar b$ of $\cO (G/Z) $  is said to be dominated by a  block $b$ of $ \cO G$ if  $  \bar b \mu (b) \ne 0 $, where  $\mu $ is the  $\cO$-algebra  surjection $ \cO G \to \cO (G/Z) $ induced by     the canonical surjection  $ G \to G/ Z$.    

\begin{lem}\label{lem:domblocks}
	Let $Z$  be a   normal   subgroup   of $G$    and  let  $\mu : \cO G \to \cO (G/Z)$  be the  $\cO$-algebra homomorphism  induced by the canonical surjection map from $G$ to $G/Z$. Then for any block $b$ of $\cO G$, either  $\mu(b) = 0 $ or $\mu (b)  $ is a sum of blocks of $\cO (G/Z)$. Moreover, we have the following.
	\begin{enumerate}[(i)] 
		\item For any block  $c$ of $\cO (G/Z) $ there is a unique block $b$ of $\cO G$   such that   $ c = \mu ( b )$.
		\item If $Z$ is an $\ell' $-group and $ \mu (b)  \ne 0 $, then $\mu$   restricts to an isomorphism $\cO G b \cong   \cO(G/Z) \mu (b)  $. 
		\item If $ Z$  is an $\ell$-group and $ Z \leq Z(G)$,  then $\mu$    induces  a bijection between the set of blocks of $\cO G $ and the set of blocks of $\cO (G/Z) $.  
		\item   The map $\mu $ commutes with the action of  $\hat \sigma $.
	\end{enumerate}
\end{lem}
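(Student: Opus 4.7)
My plan is to unravel the definitions and apply standard block-theoretic tools; none of the four parts requires substantially new ideas. Since $\mu$ is a surjective $\cO$-algebra homomorphism it carries $Z(\cO G)$ into $Z(\cO(G/Z))$, so for a block $b$ of $\cO G$ the image $\mu(b)$ is a (possibly zero) central idempotent, and any nonzero central idempotent of $\cO(G/Z)$ decomposes uniquely as an orthogonal sum of blocks of $\cO(G/Z)$. For part (i), I would expand $1 = \sum_{b} b$ over all blocks of $\cO G$ and apply $\mu$ to obtain an orthogonal decomposition $1 = \sum_b \mu(b)$ in $\cO(G/Z)$; for any block $c$ of $\cO(G/Z)$, the relation $c = c \cdot 1 = \sum_b c\mu(b)$ combined with primitivity of $c$ forces exactly one summand $c\mu(b)$ to equal $c$ while the rest vanish, giving existence and uniqueness of the block $b$ with $c$ appearing as a summand of $\mu(b)$.

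For part (ii), since $|Z|$ is invertible in $\cO$, I would form the idempotent $e = \frac{1}{|Z|}\sum_{z\in Z} z \in \cO G$, observe that normality of $Z$ makes $e$ central, and verify that $g \mapsto ge$ extends to an $\cO$-algebra isomorphism $\cO(G/Z) \cong \cO Ge$ under which $\mu$ is identified with the projection $x \mapsto xe$. Then $\mu(b) = be$, and primitivity of the central idempotent $b$ forces $be \in \{0, b\}$; when $\mu(b) \neq 0$ we have $be = b$ and $\mu$ restricts to the claimed isomorphism $\cO G b \cong \cO(G/Z)\mu(b)$.

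For part (iii), I would reduce modulo $\ell$ and show that the kernel of the induced map $\bar\mu : kG \to k(G/Z)$ is the nilpotent ideal $(kZ)^+\cdot kG$: here $(kZ)^+$ is the augmentation ideal of the local ring $kZ$, nilpotent because $Z$ is an $\ell$-group, and centrality of $Z$ lets one move all $(kZ)^+$ factors together so that powers of the extended ideal collapse. A surjective algebra map with kernel in the Jacobson radical induces a bijection between primitive central idempotents, since the block decomposition is detected in the quotient by the radical. This yields the bijection between blocks of $kG$ and blocks of $k(G/Z)$, which lifts uniquely to a bijection between blocks of $\cO G$ and $\cO(G/Z)$ via the reduction bijection $\pi$ recalled in the excerpt. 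Part (iv) is immediate: both $\mu$ and $\hat\sigma$ act as the identity on group elements and as $\hat\sigma$ on coefficients, so they manifestly commute.

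The most delicate step is (iii): one must be careful that $\cO Z$ is not in general local even when $Z$ is an $\ell$-group (since $\cO$ is only absolutely unramified, the cyclotomic factor of $\cO Z$ is a separate local summand), so the reduction to $kZ$ before invoking nilpotence is essential. Everything else in the lemma is essentially definitional bookkeeping.
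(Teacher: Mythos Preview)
Your treatment of parts (i), (ii) and (iv) is correct. For (ii) you take a different route from the paper: the paper argues that $\Irr(b)=\Irr(\mu(b))$ (via inflation) and compares $\cO$-ranks, whereas your use of the central idempotent $e=|Z|^{-1}\sum_{z\in Z}z$ and the identification $\cO(G/Z)\cong \cO Ge$ is an equally standard and perhaps cleaner alternative. Both approaches yield the same conclusion with the same hypotheses.

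Part (iii), however, has a genuine gap. The assertion that ``a surjective algebra map with kernel in the Jacobson radical induces a bijection between primitive central idempotents'' is false in general. Take $A$ to be the algebra of upper-triangular $2\times 2$ matrices over $k$ and $I$ the strictly upper-triangular ideal: then $I$ is nilpotent, $A$ is an indecomposable ring (its centre is $k\cdot 1$), but $A/I\cong k\times k$ has two blocks. What \emph{is} true in general is that $\bar\mu$ is injective on block idempotents (a nilpotent ideal contains no nonzero idempotent) and that the images $\bar\mu(b)$ give an orthogonal decomposition of $1$ into central idempotents of $k(G/Z)$; the missing point is that each $\bar\mu(b)$ is \emph{primitive}, and this uses features specific to group algebras rather than mere nilpotence of the kernel.

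The paper simply cites Nagao--Tsushima for (iii). One way to complete your argument is to use that block idempotents of $kG$ are supported on $\ell$-regular classes (Osima), together with the observation that for a central $\ell$-subgroup $Z$ the quotient $G\to G/Z$ induces a bijection on $\ell$-regular conjugacy classes which moreover preserves class sizes: if $g$ is $\ell$-regular and $[h,g]\in Z$, then $hgh^{-1}=[h,g]\,g$ has order $o([h,g])\cdot o(g)=o(g)$, forcing $[h,g]=1$. This lets one match up block idempotents on both sides directly. Your remark that one must pass to $k$ before invoking nilpotence is well taken, but by itself it does not close the gap.
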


\begin{proof}  The first claim and part (i)  follow from the fact that $\mu $ is a  surjective homomorphism of 
$ \cO$-algebras  whence the image of any  central idempotent is either zero or a central idempotent.   Suppose that  $Z$ is an $\ell' $-group  and  that $ \mu (b) \ne 0$. Then $\Irr(b)  = \Irr(\mu(b))$  where we regard $\Irr (G/Z) $  as  a subset of $\Irr(G)$ via inflation \cite[Chapter 5, Theorem 8.8]{N/T}.   Thus,  $\cO G b $ and     $\cO(G/Z) \mu (b) $ have the same $\cO$-rank and  the restriction of $\mu $ to $ \cO G b $ is injective.   Since this restriction is also surjective, we obtain (ii).     For a proof of (iii), see \cite[Chapter 5, Theorem 8.11]{N/T}.     Part (iv) is immediate from the definitions.
\end{proof}

\begin{lem}\label{lem:exttoC2xC2}
	Suppose $N \lhd G$ are finite groups such that $G/N \cong C_2 \times C_2$. Then for any $G$-stable linear character $\tau \in $ \textnormal{Irr }$(N)$, its square $\tau^2$ extends to a linear character of $G$. 
\end{lem}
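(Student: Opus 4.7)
The plan is to extend $\tau^{2}$ from $N$ to $G$ in two cyclic stages via an intermediate index-$2$ subgroup. Pick lifts $x,y \in G$ of generators of $G/N \cong C_{2}\times C_{2}$, and set $H := \langle N,x \rangle$; then $H\trianglelefteq G$ with both $H/N$ and $G/H$ cyclic of order $2$.

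First I would extend $\tau$ from $N$ to $H$ by the standard cyclic-extension argument for linear characters (available because $\tau$ is $H$-stable, being $G$-stable, and $H/N$ is cyclic); call this extension $\tilde\tau \in \Irr(H)$. The issue is that $\tilde\tau$ need not itself be $G$-stable, but I can control its $y$-conjugate: since $H\trianglelefteq G$ and $\tau^{y} = \tau$ by $G$-stability, the character $\tilde\tau^{y}$ is another extension of $\tau$ to $H$; any two extensions of $\tau$ to $H$ differ by an element of $\Irr(H/N)$, so $\tilde\tau^{y} = \tilde\tau \cdot \lambda^{\varepsilon}$ for some $\varepsilon \in \{0,1\}$, where $\lambda$ is the unique non-trivial character of $H/N$.

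The key observation is that $\lambda^{2} = 1$, which lets squaring absorb this ambiguity. If $\varepsilon = 0$, then $\tilde\tau$ is already $G$-stable, hence extends to a linear character of $G$ by a second application of cyclic extension across $G/H$; the square of that extension then extends $\tau^{2}$. If $\varepsilon = 1$, then
\[ (\tilde\tau^{2})^{y} \;=\; (\tilde\tau\lambda)^{2} \;=\; \tilde\tau^{2}\lambda^{2} \;=\; \tilde\tau^{2}, \]
so $\tilde\tau^{2} \in \Irr(H)$ is a $G$-stable extension of $\tau^{2}$ and therefore itself extends to a linear character of $G$ by cyclic extension across $G/H$. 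In either case $\tau^{2}$ extends to a linear character of $G$.

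I do not anticipate any significant obstacle. The entire argument rests on the cyclic-extension lemma for linear characters, applied twice, together with the fact that $\lambda$ has order $2$, which is precisely what cancels the one bit of ambiguity arising from $G/N$ being non-cyclic.
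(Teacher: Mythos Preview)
Your proposal is correct and follows essentially the same route as the paper: pass to an intermediate index-$2$ subgroup $H$, extend $\tau$ to $\tilde\tau$ on $H$ by cyclic extension, and use that any $G$-conjugate of $\tilde\tau$ differs from $\tilde\tau$ by a character of $H/N$ of order at most $2$, so that $\tilde\tau^{2}$ is $G$-stable and extends across $G/H$. The paper treats the two cases $\varepsilon=0,1$ uniformly (just writing $^{g}\hat\tau = \lambda\hat\tau$ with $\lambda^{2}=1$ and squaring), whereas you split them, but the argument is the same.
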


\begin{proof}
	Let $H \leq G$ be such that $G/H \cong C_2$ and $H/N \cong C_2$. Since $H/N$ is cyclic and $\tau$ is $G$-stable, and therefore $H$-stable, $\tau$ extends to a linear character $\hat \tau$ of $H$. 
	
	Suppose that $g \in G$. Then $^g \tau = \tau$ so both $\hat \tau$ and $^g \hat \tau$ are elements of $\textnormal{Irr}(H \,| \,\tau) = \textnormal{Irr}(H \,|\, ^g \tau)$. Thus by Lemma~\ref{lem:coveringsameblock}, there exists a linear character $\la \in $ Irr $( H/N) $ such that $^g \hat \tau = \la \hat \tau $. Since $H/N \cong C_2$, $\la^2 = 1$. 
	
	As $\tau$ is a linear character, $\hat \tau^2$ is an extension of $\tau^2$ to $H$. Therefore $^g (\hat \tau ^2) = (^g \hat \tau )^2 = (\la \hat \tau)^2 = \hat \tau ^2$. It follows that $\hat \tau ^2 $ is $G$-stable, and therefore $\hat \tau^2$ extends to $G$. Hence, $\tau^2$ extends to $G$. 	
\end{proof}

The following easy lemma will be used in the next section. 

\begin{lem} \label{lem:lingalois}     
	Let $G$ be a finite group and let $ H$ be a subgroup of $G$.    If $\theta : H \to  \bar K^{\times} $ is a linear character  of $\ell'$-order, then  $^{\hat\sigma}\theta = \theta^{\ell} $.
\end{lem}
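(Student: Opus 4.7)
The plan is to reduce everything to the behavior of the chosen extension $\hat\sigma: \bar K \to \bar K$ on roots of unity and then read off the identity pointwise. Since $\theta$ is a linear character of $\ell'$-order, its image lies in the subgroup $\mu_{\ell'}(\bar K)$ of roots of unity of order prime to $\ell$: if $\theta$ has order $n$ with $\gcd(n,\ell)=1$, then for each $h \in H$ the value $\theta(h)$ is an $n$-th root of unity, hence an $\ell'$-root of unity.

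The key input is the property highlighted in the paragraph just before Lemma~\ref{lem:extendingfrob}: because $\cO$ is absolutely unramified, every $\ell'$-root of unity in $\bar K$ already lies in $K$, and any automorphism of $\bar K$ extending $\hat\sigma|_{\cO}$ must therefore act on such a root $\zeta$ by $\zeta \mapsto \zeta^\ell$ (as this is how $\hat\sigma$ acts on the Teichm\"uller lifts, which already generate all $\ell'$-roots of unity). The fixed extension $\hat\sigma: \bar K \to \bar K$ used in the paper satisfies this, in addition to fixing $\ell$-power roots of unity by Lemma~\ref{lem:extendingfrob}.

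Combining these two observations, for every $h \in H$ we have
\[ {}^{\hat\sigma}\theta(h) \;=\; \hat\sigma(\theta(h)) \;=\; \theta(h)^{\ell} \;=\; \theta^{\ell}(h), \]
so ${}^{\hat\sigma}\theta = \theta^{\ell}$ as functions $H \to \bar K^\times$. There is essentially no obstacle here: the statement is a direct pointwise consequence of how $\hat\sigma$ was constructed, and the hypothesis that $\theta$ has $\ell'$-order is used exactly to place each value $\theta(h)$ in the subgroup on which $\hat\sigma$ acts by the $\ell$-th power map.
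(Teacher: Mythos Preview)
Your proof is correct and essentially identical to the paper's: both compute pointwise using that $\hat\sigma$ raises $\ell'$-roots of unity to the $\ell$-th power. The paper first passes to $H/\ker\theta$ to assume $H$ is an abelian $\ell'$-group, but this reduction is unnecessary since the $\ell'$-order hypothesis on $\theta$ already forces each $\theta(h)$ to be an $\ell'$-root of unity, exactly as you observe.
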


\begin{proof}   
	Since $\theta  $ and  $ ^{\hat \sigma} \theta $ have the same kernel,  we may assume that  $H$ is an abelian $\ell'$-group.  Then  for any    $x \in H$, we have 
	\[ ^{\hat \sigma} \theta (x) = \sigma (\theta(x) )  =  \theta(x)^{\ell} =\theta^{\ell} (x) . \qedhere \]
\end{proof}  

The following descent result is a consequence of Rickard's theorem   on lifting splendid Rickard equivalences from characteristic $\ell$ to characteristic $0$.

\begin{lem} \label{lem:BDRdescent} 
	Let $ (K', \cO', k') $ be an $\ell$-modular system and let $\cO_0 :=  W(k') \leq \cO' $ be the Witt vectors of $k'$ in $ \cO' $.   Let $b$ a central idempotent of $\cO_0 G $, $H$ a finite group and $c$ a central idempotent of $\cO_0 H$. Suppose that  $C$ is  a  splendid Rickard complex of $(\cO'Gb, \cO'Hc )$-bimodules.  Then there exists a splendid Rickard complex $C_0$ of $(\cO_0Gb, \cO_0Hc)$-bimodules such that the following holds.
	\begin{enumerate}[\rm (i)] 
		\item $ C \cong  \cO'\otimes _{\cO_0}  C_0   $  as a complex  of $(\cO'Gb, \cO'Hc )$-bimodules.
		\item For all $i$, $H^i(C) \cong \cO'\otimes_{\cO_0}   H^i\! (C_0)\!$ as  $ (\cO'Gb, \cO'Hc )$-bimodules. 
	\end{enumerate}
	If in addition, $H^d(C)$ induces a Morita equivalence between $\cO'Hc$ and $\cO'G b$ for some $d$, then $H^i(C_0)$ induces a Morita equivalence between $\cO_0 Hc\!$ and $\cO_0 Gb\!$ for \break all $i$. 
\end{lem}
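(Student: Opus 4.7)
The plan is to reduce the problem to characteristic $\ell$ and invoke Rickard's theorem on lifting splendid Rickard complexes from $k'$ to any complete discrete valuation ring with residue field $k'$ (here $\cO_0$ and $\cO'$). First I would form the reduction $\bar C := k' \otimes_{\cO'} C$, which is a splendid Rickard complex of $(k'Gb, k'Hc)$-bimodules. Since $\cO_0 = W(k')$ and $\cO'$ share the residue field $k'$ and the idempotents $b,c$ already lie in $\cO_0 G$, $\cO_0 H$, the complex $\bar C$ is the natural mod-$\ell$ reduction against either ring.

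Next I would apply Rickard's lifting theorem to the modular system $(\mathrm{Frac}(\cO_0), \cO_0, k')$ to obtain a splendid Rickard complex $C_0$ of $(\cO_0 Gb, \cO_0 Hc)$-bimodules with $k' \otimes_{\cO_0} C_0 \simeq \bar C$ in the homotopy category; the theorem also asserts uniqueness of such a lift up to homotopy equivalence. To prove (i), I would check that $\cO' \otimes_{\cO_0} C_0$ is again a splendid Rickard complex over $\cO'$ --- this is immediate because splendidness is preserved under extension of scalars, the terms being direct summands of permutation modules on sets of the form $G\times H^{\mathrm{op}}/\Delta P$ --- and that it also reduces to $\bar C$. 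The uniqueness statement over $\cO'$ then forces $C \cong \cO'\otimes_{\cO_0} C_0$. Part (ii) follows at once because $\cO'$ is flat over the discrete valuation ring $\cO_0$, so $\cO'\otimes_{\cO_0}-$ commutes with the taking of cohomology.

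For the final claim, I would argue that the Morita equivalence condition descends along the faithfully flat extension $\cO_0 \hookrightarrow \cO'$. The vanishing of $H^i(C)$ for $i\ne d$, forced by $C$ being quasi-isomorphic to a stalk complex in degree $d$, descends by faithful flatness to the vanishing of $H^i(C_0)$ for $i\ne d$. For the remaining degree, $H^d(C) \cong \cO'\otimes_{\cO_0} H^d(C_0)$ is a progenerator on both sides with the correct endomorphism rings over $\cO'$, and the progenerator and endomorphism-ring conditions characterising a Morita bimodule all descend along the faithfully flat map $\cO_0\to\cO'$. Hence $H^d(C_0)$ induces a Morita equivalence between $\cO_0 Hc$ and $\cO_0 Gb$.

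The main obstacle, and the only non-formal step in the argument, is the correct invocation of Rickard's lifting theorem: one needs both the existence of a lift over $\cO_0$ \emph{and} the uniqueness statement applied over $\cO'$ in order to identify $C$ with $\cO'\otimes_{\cO_0} C_0$. Once that two-step base-change reasoning is in hand, parts (ii) and the Morita descent are routine consequences of the flatness and faithful flatness of $\cO'$ over $\cO_0$.
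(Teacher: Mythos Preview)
Your argument is essentially the same as the paper's: reduce $C$ modulo $\ell$, lift the resulting complex to $\cO_0$ via Rickard's theorem, extend scalars to $\cO'$, and use the \emph{uniqueness} part of Rickard's theorem over $\cO'$ to identify this extension with $C$. Part (ii) then follows from flatness, exactly as you say. The paper adds one remark you omit: Rickard's theorem is stated under a splitting-field hypothesis on the fraction field, and the authors note explicitly that the proof only uses completeness and the lifting of $\ell$-permutation modules, so it applies to $\cO_0$.

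For the final Morita statement the paper simply invokes the Noether--Deuring theorem (in the form of \cite[Prop.~4.5]{K/L}), which is precisely your faithfully-flat descent of the progenerator and endomorphism-ring conditions. One point to flag: your assertion that ``$H^i(C)=0$ for $i\ne d$, forced by $C$ being quasi-isomorphic to a stalk complex'' is not justified by the hypothesis alone --- knowing that one cohomology group of a Rickard complex happens to be a Morita bimodule does not a priori kill the other cohomology groups. Fortunately you do not need this: the descent of the Morita property of $H^d$ along $\cO_0\hookrightarrow\cO'$ is all that is required, and your second paragraph already does that without reference to the vanishing claim. Drop the stalk-complex sentence and the argument is clean.
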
 

\begin{proof}  
	Since $\cO_0 $ and $ \cO$ have the same residue field the existence of $C_0$ and statement (i) are immediate from the    existence and uniqueness assertions of  \cite[Theorem~5.2]{Ri}. More specifically, the complex $ \bar C:=  k' \otimes_{\cO'} C_0  $ is a splendid Rickard complex of  $ ( kG \bar b, k H  \bar c  )$-bimodules  where $ \bar b $ and $\bar c$ denote the images    of  $b$ and $c$ in $kG$ and $kH$ respectively. By \cite[Theorem~5.2]{Ri}, there exists a splendid Rickard complex  $C_0$ of $(\cO_0Gb, \cO_0Hc)$-bimodules such that $k' \otimes_{\cO_0} C_0 \cong \bar C$. Note that Rickard's theorem is stated under the assumption that the fraction field of $\cO_0 $ be a splitting field for $G$ and $H$ but the proof does not use this -- the only ingredient  is the completeness of $\cO_0$  and the fact that $\ell$-permutation modules lift   (uniquely) from $k$ to $\cO_0 $. By extension of scalars, $ \cO' \otimes_{\cO_0} C_0$ is a splendid Rickard complex of $(\cO'Gb, \cO'Hc)$-bimodules. Further,
	\[ k \otimes _{\cO'} ( \cO' \otimes_{\cO_0}   C_0  ) \cong   k\otimes_{\cO_0} C_0  \]	
	as $(\cO'Gb, \cO'Hc )$-bimodules. Statement (i) now follows as by Theorem \cite[Theorem~5.2]{Ri}, $C$ is the unique splendid Rickard complex lifting $\bar C$.
 
	Statement (ii) follows from (i) since $\cO_0 \subseteq \cO'$ is a flat extension. The final statement  follows from  an application of  the Noether-Deuring theorem  (see for instance \cite[Prop.~4.5]{K/L}).
\end{proof} 

\section{Galois actions and  Lusztig series }\label{sec:alggroups}
We continue with the  notation of the previous section. There is a  canonical  embedding  (as valuation fields) of ${\mathbb Q}_{\ell}$ in $K$. Let $\bar {\mathbb Q}_{\ell}$ denote the algebraic closure of ${\mathbb Q}_{\ell}$ in $\bar K$. We fix a prime $p$ different from $\ell$, a group isomorphism  $\iota: ({\mathbb Q}/{\mathbb Z})_{p'} \to \overline{\FF}_p^{\times}$ and an injective group homomorphism $\jmath: ({\mathbb Q}/{\mathbb Z}) \to   \bar {\mathbb Q}_{\ell}^{\times}$.

\subsection{Characters of tori and duality}\label{subsec:alggroups}
Let $\bG$ be a connected reductive algebraic group defined over $\overline{\FF}_p$ with   Frobenius   endomorphism $F: \bG \rightarrow \bG$ (see  Remark~\ref{rem:reesuzuki}).   We fix an $F$-stable maximal torus $\bT_0 $ of $\bG $,  a  triple  $(\bG^*, \bT^*_0, F )$ dual to the triple $(\bG, \bT_0, F) $ and  an $F$-equivariant isomorphism $ X (\bT_0) \cong Y (\bT^*_0) $ which sends simple roots to simple coroots as in \cite[Definition 13.10]{D/M}, where $ X (\bT_0)$ denotes the set of characters of $\bT_0$ and $ Y (\bT^*_0) $ denotes the set of cocharacters of $\bT^*_0$. We let $\nabla(\bG, F)$ denote the set of pairs $(\bT, \theta) $ where $\bT$ is an $F$-stable maximal torus of $\bG$ and $\theta: \bT^F \to \bar{\mathbb Q}_{\ell}$ is a linear character. Dually, let  $\nabla^*(\bG^*, F)$ denote the set of pairs $(\bT^*, s) $ where $\bT^*$ is an $F$-stable maximal torus of $\bG^* $ and $s \in \bT^{*F}$.  

The choice  of $\iota$, $\jmath$ and the isomorphism $X(\bT_0) \cong Y(\bT_0^*)$ above determine a bijection \cite[11.15, Proposition 13.13]{D/M}
\begin{equation}\label{eq:nabladuality}  
\nabla(\bG, F)/\bG^F \rightarrow \nabla^*(\bG^*, F)/\bG^{*F}. \end{equation}  
If $(\bT, \theta)\in \nabla(\bG, F) $ and $(\bT^*, s) \in \nabla^*(\bG^*, F) $  then we write $(\bT, \theta) \stackrel{\bG} {\leftrightarrow} (\bT^*, s)$ if the classes of $(\bT, \theta)$  and $(\bT^*, s)$ correspond under  the above bijection. We write $\bT \stackrel{\bG} {\leftrightarrow} \bT^*$ if $\bT$ is an $F$-stable maximal torus of $\bG $ and $\bT^*$ is an $F$-stable maximal torus  of $\bG^*$ such that $(\bT, \theta)\stackrel{\bG} {\leftrightarrow}   (\bT^*, s)$ for some $\theta $ and some $s$. For $s$  a semisimple element of $\bG^{*F}$ we denote by $\nabla(\bG, F, s)$ the  set of all pairs $(\bT, \theta)$ such that $ (\bT, \theta) \stackrel{\bG} {\leftrightarrow}  (\bT^*, s)$ for some  $F$-stable maximal torus $\bT^* $ of $\bG^*$ containing $s$.  
 
The following lemma lists some well-known properties of bijection (\ref{eq:nabladuality}). We give a short indication of the proof for the convenience of the reader. Let $$ \tau: \bG _{sc} \to [\bG, \bG] $$ be  a simply-connected covering (see \cite[Section 8.1]{C/E3}).

\begin{lem} \label{lem:duality}  
	Let $s$ be a semisimple element of $\bG^{*F} $. 
	\begin{enumerate} [(i)]  
		 \item If $(\bT, \theta)\in \nabla(\bG, F,s)$, then for any $n \in {\mathbb N} $, $(\bT, \theta^n)\in \nabla(\bG, F,s^n ) $.	
		 \item There exists an isomorphism $Z(\bG^{*})^F \to  \Irr  (\bG^F/\tau(\bG_{sc}^F)), (t \mapsto \hat t)$ such that for all $t \in Z(\bG^{*})^F$, $ \nabla(\bG, F, t) = \{ (\bT, \hat t_{\bT^F}) \}$ where $\bT$ runs over all $F$-stable maximal tori of $\bG$.
	\end{enumerate}
\end{lem}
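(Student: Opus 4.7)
My plan is to make the bijection (\ref{eq:nabladuality}) explicit on each pair of dual $F$-stable maximal tori. First I would recall that (\ref{eq:nabladuality}) is glued together from a family of group isomorphisms
\[ \alpha_{\bT,\bT^*}\colon \Irr(\bT^F)\to \bT^{*F}, \]
one for each pair $\bT \stackrel{\bG}{\leftrightarrow} \bT^*$, built from the $F$-equivariant isomorphism $X(\bT_0)\cong Y(\bT_0^*)$ together with $\iota$ and $\jmath$, exactly as in \cite[11.15, Prop.~13.13]{D/M}; the class of $(\bT,\theta)$ then corresponds to the class of $(\bT^*,\alpha_{\bT,\bT^*}(\theta))$.

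Part (i) will then be immediate: each $\alpha_{\bT,\bT^*}$ is a homomorphism of abelian groups, so whenever $(\bT,\theta)\stackrel{\bG}{\leftrightarrow}(\bT^*, s)$ we have $\alpha_{\bT,\bT^*}(\theta^n)=\alpha_{\bT,\bT^*}(\theta)^n$, which is $\bG^{*F}$-conjugate to $s^n$. Hence $(\bT,\theta^n)\in\nabla(\bG,F,s^n)$.

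For part (ii), I would invoke the standard isomorphism $Z(\bG^*)^F\to \Irr(\bG^F/\tau(\bG_{sc}^F))$, $t\mapsto\hat t$, as in \cite[Prop.~13.30]{D/M}. Its characterising property is exactly that, for every $F$-stable maximal torus $\bT$ of $\bG$ with dual $\bT^*$,
\[ (\bT, \hat t|_{\bT^F})\stackrel{\bG}{\leftrightarrow}(\bT^*, t). \]
Since $Z(\bG^*)$ is contained in every maximal torus of $\bG^*$, the element $t$ lies in $\bT^{*F}$ for every $F$-stable maximal torus $\bT^*$ of $\bG^*$, so $\nabla^*(\bG^*,F,t)$ consists of the classes of the pairs $(\bT^*,t)$ as $\bT^*$ ranges over all such tori. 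Pulling back via (\ref{eq:nabladuality}) yields $\nabla(\bG,F,t)=\{(\bT,\hat t|_{\bT^F})\}$, proving (ii).

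The only genuine obstacle is a convention match for $\hat t$: if $\hat t$ is defined via the restriction formula above, then (ii) is essentially tautological, whereas if $\hat t$ is constructed abstractly from the quotient $\bG^F/\tau(\bG_{sc}^F)$ and only a posteriori identified with $Z(\bG^*)^F$, one must check that the two descriptions coincide. This compatibility is part of \cite[Prop.~13.30]{D/M}, so nothing additional is required.
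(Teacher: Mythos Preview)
Your argument is correct and follows essentially the same route as the paper: both use the torus-wise group isomorphism $\Irr(\bT^F)\cong \bT^{*F}$ for (i), and the standard central-character map $t\mapsto\hat t$ together with the fact that $t$ is its own $\bG^{*F}$-conjugacy class for (ii). The only step the paper makes more explicit is that, once ``pulling back'' shows every element of $\nabla(\bG,F,t)$ is $\bG^F$-conjugate to some $(\bT,\hat t_{\bT^F})$, the $\bG^F$-invariance of the linear character $\hat t$ forces $\theta=\hat t_{\bT^F}$ on the nose; you leave this implicit, but it is immediate.
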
   

\begin{proof}    
	Let  $\bT $ be an $F$-stable maximal torus of $\bG$. By   construction  of the bijection (\ref{eq:nabladuality}), if  $\bT^*$ is an $F$-stable maximal torus of $\bG^*$ such that $\bT  \stackrel {\bG}{ \leftrightarrow} \bT^* $, then there exists an  isomorphism  $\zeta:  \bT^{*F}  \to  \mbox{Irr}( \bT^F)$, such that $(\bT, \zeta(t))  \stackrel {\bG} {\leftrightarrow}  (\bT^*,  t )$ for all $t\in \bT^{*F} $ (see \cite[(8.15)]{C/E3}). Now suppose that $(\bT, \theta) \stackrel{\bG}{\leftrightarrow} (\bT^*,  s)$. Then $(\bT,  \theta) $ is $\bG^F$-conjugate to $(\bT,  \zeta(s)) $, hence $(\bT,  \theta^n) $ is $\bG^F$-conjugate to $(\bT, \zeta(s^n)) $ and $(\bT,  \zeta(s^n)) \stackrel{\bG}{\leftrightarrow}(\bT^*, s^n)$ proving (i).
   
	By \cite[(8.19)]{C/E3}, there exists an isomorphism   $Z(\bG^{*})^F  \to  \Irr  (\bG^F/\tau(\bG_{sc}^F)), ( t \mapsto  \hat t)$ such that for any $\bT $, $\bT^*$ and $\zeta $ as above, and any $t \in Z(\bG^{*})^F \leq \bT^{*F}$, we have that $\hat  t_{\bT^F}= \zeta (t)$ where we view $\hat t $ as a character of $\bG^F$ via pull back through $\bG^F \rightarrow \bG^F/\tau(\bG_{sc}^F)$ (see \cite[Proposition 5.11(ii)]{D/Lpaper} for the independence of the isomorphism). It follows that for any $t \in  Z(\bG^{*})^F$, $(\bT, \hat t_{\bT^F})=(\bT, \zeta(t)) \stackrel{\bG}{\leftrightarrow} (\bT^*,  t )$, and hence $(\bT, \hat t_{\bT^F} ) \in \nabla(\bG, F, t)$. 

	Now suppose that $(\bT, \theta) \in \nabla ( \bG, F, t) $, say $ (\bT, \theta)  \stackrel{\bG}{\leftrightarrow} (\bT^*,  t)  $.  Then   $(\bT,  \theta) $ is $\bG^F$-conjugate to $(\bT,  \zeta(t)) =  (\bT, \hat t_{\bT^F} )  $. But $\hat t$ is a linear character of $\bG^F$, so it is $\bG^F$-stable, hence   $(\bT,  \theta)  =  (\bT, \hat t_{\bT^F} )  $,  proving part (ii). \qedhere

\end{proof}

\begin{defi} \label{defi:rH}
Let $H$ be a finite group and let $h \in H$  be an $\ell'$-element. We denote by $a_{H}(h)$ the order of $h$ modulo $Z(H)$, and by $r_{H}(h)$ the multiplicative order of $\ell$ modulo $a_{H}(h)$. 
\end{defi}

\pagebreak
\begin{lem}\label{lem:thetatildeinG} 
	Let $s \in \bG^{*F}$ be a semisimple $\ell'$-element and let $r = r_{\bG^{*F}}(s)$. 
	\begin{enumerate}[(i)]  
		\item Let  $ (\bT,   \theta ) \in \nabla(\bG, F) $. Then $ (\bT,   \theta ) \in \nabla(\bG, F,s) $  if and only if $ (\bT, ^{\hat \sigma}\theta ) \in \nabla(\bG, F,s^\ell) $.
		\item  Let $ \tau$ denote the linear character of $ \bG^F$ corresponding to $s^{\ell^r-1} $  by the isomorphism in Lemma~\ref{lem:duality} (ii).  Then  $^{\hat \sigma ^r} \theta  = \theta  {\tau}_{ \bT^F}  $  for all $( \bT,  \theta) \in \nabla (\bG, F, s)$.  
	\end{enumerate}
\end{lem}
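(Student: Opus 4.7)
The plan rests on one key observation, which will be used in both parts. Fix a dual torus $\bT^*$ of $\bG^*$ with $\bT \stackrel{\bG}{\leftrightarrow} \bT^*$, and recall from the proof of Lemma~\ref{lem:duality} the group isomorphism $\zeta : \bT^{*F} \to \Irr(\bT^F)$ with the property that $(\bT,\zeta(u)) \stackrel{\bG}{\leftrightarrow} (\bT^*,u)$ for every $u \in \bT^{*F}$. Any $(\bT, \theta) \in \nabla(\bG, F, s)$ then has the form $\theta = \zeta(t)$ for some $t \in \bT^{*F}$ that is $\bG^{*F}$-conjugate to $s$; in particular $\theta$ is of $\ell'$-order because $s$ is. Lemma~\ref{lem:lingalois} thus supplies
\[ {}^{\hat\sigma^m}\theta = \theta^{\ell^m} \quad \text{for every } m \geq 1. \]

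For (i), the forward direction is obtained by taking $m=1$ in the displayed identity and appealing to Lemma~\ref{lem:duality}(i). For the converse, the plan is to use the elementary fact that the map $x \mapsto x^\ell$ is injective on the $\ell'$-elements of any group: if $a^\ell = b^\ell$ with $a,b$ of $\ell'$-order then $\langle a \rangle = \langle a^\ell \rangle = \langle b \rangle$, so $a,b$ commute, and within the finite abelian group they generate the $\ell$-power map is bijective, forcing $a=b$. Starting from $(\bT, {}^{\hat\sigma}\theta) \in \nabla(\bG, F, s^\ell)$, the $\ell'$-order of $s^\ell$ again forces $\theta$ to be of $\ell'$-order, and ${}^{\hat\sigma}\theta = \zeta(t)^\ell = \zeta(t^\ell)$; the orbit correspondence then yields $g \in \bG^{*F}$ with $g t^\ell g^{-1} = s^\ell$, and the injectivity just noted gives $gtg^{-1}=s$, so $(\bT,\theta)\in\nabla(\bG,F,s)$. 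For (ii), iterating part (i) one gets $(\bT, {}^{\hat\sigma^r}\theta) \in \nabla(\bG, F, s^{\ell^r})$ and the displayed identity rewrites ${}^{\hat\sigma^r}\theta = \theta\cdot\theta^{\ell^r-1}$. Setting $\bar s := s^{\ell^r-1}$, the definition of $r$ places $\bar s$ in $Z(\bG^*)^F$, so $\bar s$ is fixed by $\bG^{*F}$-conjugation; since $t$ is conjugate to $s$ this forces $t^{\ell^r - 1} = \bar s$, whence
\[ \theta^{\ell^r - 1} = \zeta(t^{\ell^r-1}) = \zeta(\bar s) = \hat{\bar s}|_{\bT^F} = \tau|_{\bT^F}, \]
the last two equalities coming directly from the proof of Lemma~\ref{lem:duality}(ii).

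The only genuinely delicate step will be the identification $\zeta(\bar s) = \hat{\bar s}|_{\bT^F}$; every other piece is routine bookkeeping with the bijection~(\ref{eq:nabladuality}) together with the injectivity of the $\ell$-power map on $\ell'$-elements. That identification is precisely what is extracted in the proof of Lemma~\ref{lem:duality}(ii), so the present lemma will reduce to combining that fact with Lemma~\ref{lem:lingalois}.
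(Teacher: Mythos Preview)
Your argument is correct and follows essentially the same route as the paper's: both parts hinge on Lemma~\ref{lem:lingalois} to rewrite ${}^{\hat\sigma^m}\theta$ as $\theta^{\ell^m}$, and then on Lemma~\ref{lem:duality} to pass from powers of $\theta$ to powers of $s$. The paper's proof of (ii) simply cites Lemma~\ref{lem:duality}(i) to get $(\bT,\theta^{\ell^r-1})\in\nabla(\bG,F,s^{\ell^r-1})$ and then Lemma~\ref{lem:duality}(ii) to conclude $\theta^{\ell^r-1}=\tau|_{\bT^F}$, whereas you unpack these steps by working directly with the isomorphism $\zeta$; the content is the same. Your explicit treatment of the converse in (i) via injectivity of the $\ell$-power map on $\ell'$-elements is a welcome elaboration of what the paper leaves implicit.
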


\begin{proof}     
	Part (i)  follows from Lemma~\ref{lem:lingalois} and   Lemma~\ref{lem:duality} (i). To prove (ii), note that by definition of $r$,  $s^{\ell^r-1} \in  Z(\bG^{*F})= Z(\bG^*)^F$, hence  $\tau$ is well-defined. Let  $ (\bT, \theta ) \in \nabla(\bG, F, s) $.  By Lemma~\ref{lem:duality} (i), $(\bT, \theta^{\ell^r-1} ) \in (\bG, F, s^{\ell^r-1})$.  Hence, by    Lemma~\ref{lem:duality} (ii), $\theta^{\ell^r-1} =\theta  {\tau}_{ \bT^F}$. Thus 
	\[^{\hat \sigma ^r} \theta = \theta^{\ell^r}   = \theta \theta^{\ell^r-1} = \theta  {\tau}_{ \bT^F}\]  
 where the first equality holds by Lemma~\ref{lem:lingalois}.
 \end{proof}

\subsection{ Regular embeddings.}   
We fix a regular embedding $i: \bG \rightarrow \widetilde \bG$  (see \cite[Section 15.1]{C/E3}) and let $F$ denote a Steinberg morphism on $\widetilde \bG$ compatible with the Steinberg morphism $F$ on $\bG$. The map $\bT \mapsto  Z(\widetilde \bG) \bT$ induces a bijection  between the set of ($F$-stable) maximal tori of $\bG$  and the  set  of  ($F$-stable)   maximal tori of $\widetilde \bG $. The inverse bijection  is given by  $\widetilde  \bT  \mapsto    \widetilde \bT \cap \bG$. Set $\widetilde \bT_0  =  Z(\widetilde \bG) \bT_0  $. Let $(\widetilde \bG^*,  \widetilde \bT_0^*,  F)$ be dual to $(\widetilde\bG, \widetilde\bT_0, F)$ and $i^*: \widetilde \bG^* \rightarrow \bG^*$   be a surjection dual to $i$ with $ i^*(\widetilde  \bT_0^*)= \bT_0^* $. Fix an isomorphism $ X(\widetilde \bT_0) \to Y(\widetilde \bT_0^*)$ lifting the isomorphism $ X(\bT_0) \to Y(\bT_0^*)$.

\begin{lem} \label{lem:torusembed} 
	Let $\tilde s $ be a semisimple element of $\widetilde \bG^{*F} $ and let $s = i^*(\tilde s)$. Let $\widetilde \bT$ be an $F$-stable maximal torus of $\widetilde \bG$, $\tilde \theta: \mathrm {Irr} (\widetilde \bT^F) \to \bar{\mathbb{Q}}_{\ell} $ a linear character of $\widetilde \bT^F $ and let $\theta$ denote the restriction of $\tilde \theta$ to $\bT^F$. If $(\widetilde \bT, \tilde \theta)  \in \nabla(\widetilde\bG, F,\tilde s)  $, then  $(\bT, \theta)\in \nabla(\bG, F,s)$.
\end{lem}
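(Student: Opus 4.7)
The plan is to unpack the duality bijection \eqref{eq:nabladuality} in parallel for $\bG$ and $\widetilde\bG$ and invoke its compatibility under the regular embedding $i$. By hypothesis I may pick an $F$-stable maximal torus $\widetilde\bT^*$ of $\widetilde\bG^*$ with $\tilde s \in \widetilde\bT^{*F}$ and $(\widetilde\bT, \tilde\theta) \stackrel{\widetilde\bG}{\leftrightarrow} (\widetilde\bT^*, \tilde s)$. I would then set $\bT^* := i^*(\widetilde\bT^*)$; since $i^*$ restricts to a surjection of $F$-stable maximal tori with central kernel, this is an $F$-stable maximal torus of $\bG^*$ containing $s = i^*(\tilde s)$, and the problem reduces to showing $(\bT, \theta) \stackrel{\bG}{\leftrightarrow} (\bT^*, s)$.

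The first sub-step is that $\bT \stackrel{\bG}{\leftrightarrow} \bT^*$. This holds because the chosen isomorphism $X(\widetilde\bT_0) \to Y(\widetilde\bT_0^*)$ was taken to lift $X(\bT_0) \to Y(\bT_0^*)$, so the $\bG$-duality and the $\widetilde\bG$-duality are compatible on tori under $\widetilde\bT \mapsto \widetilde\bT \cap \bG$ on the group side and $\widetilde\bT^* \mapsto i^*(\widetilde\bT^*)$ on the dual side. The second and main sub-step is to invoke the isomorphisms $\zeta \colon \bT^{*F} \to \Irr(\bT^F)$ and $\widetilde\zeta \colon \widetilde\bT^{*F} \to \Irr(\widetilde\bT^F)$ appearing in the proof of Lemma~\ref{lem:duality}, together with the compatibility
\[ \Res^{\widetilde\bT^F}_{\bT^F} \circ \widetilde\zeta \;=\; \zeta \circ i^*. \]
After replacing $(\widetilde\bT, \tilde\theta)$ by a $\widetilde\bG^F$-conjugate I may assume $\tilde\theta = \widetilde\zeta(\tilde s)$; the displayed equation then yields $\theta = \widetilde\zeta(\tilde s)|_{\bT^F} = \zeta(i^*(\tilde s)) = \zeta(s)$, so $(\bT, \theta) \stackrel{\bG}{\leftrightarrow} (\bT^*, s)$ as required.

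The main obstacle is establishing the displayed compatibility of $\zeta$ with restriction of characters and with $i^*$. This is essentially the functoriality of Deligne--Lusztig duality under regular embeddings: at the lattice level, the inclusion $\bT \hookrightarrow \widetilde\bT$ and the surjection $\widetilde\bT^* \twoheadrightarrow \bT^*$ are dual to each other via the compatible choice of $X \cong Y$ isomorphisms, and passing to the norm and character maps (through the fixed $\iota$ and $\jmath$) converts this into the required commutative square. I would either extract this directly from the construction of $\zeta$ in \cite[Chapter~8 and Section~15.1]{C/E3}, or cite it as a standard fact on the behaviour of duality under regular embeddings.
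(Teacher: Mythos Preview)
The paper does not give an argument at all: its proof is the single sentence ``See \cite[Lemme~9.3(a)]{B}.'' Your proposal, by contrast, sketches what is essentially the content of that reference---the functoriality of the torus duality under a regular embedding, expressed via the compatibility $\Res^{\widetilde\bT^F}_{\bT^F}\circ\widetilde\zeta=\zeta\circ i^*$. So you are not taking a different route; you are unpacking the citation.

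Your outline is correct. One point that deserves a sentence of justification is the replacement ``after replacing $(\widetilde\bT,\tilde\theta)$ by a $\widetilde\bG^F$-conjugate I may assume $\tilde\theta=\widetilde\zeta(\tilde s)$.'' As stated this changes $(\bT,\theta)$ too, and you need to know the new pair lies in the same $\bG^F$-class as the old one, not merely the same $\widetilde\bG^F$-class. This is fine because the conjugating element can be taken in $N_{\widetilde\bG^F}(\widetilde\bT)$, and $N_{\widetilde\bG^F}(\widetilde\bT)=N_{\bG^F}(\bT)\cdot\widetilde\bT^F$ (the Weyl groups of $\bG$ and $\widetilde\bG$ coincide, and Lang's theorem gives surjectivity onto $W^F$ on both sides); since $\widetilde\bT^F$ acts trivially, the conjugation descends to $N_{\bG^F}(\bT)$. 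With that clarification your argument goes through, and your identification of the displayed compatibility as the crux---extractable from \cite[\S8, \S15.1]{C/E3} or from \cite{B}---is exactly right.
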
 

\begin{proof}  
	See \cite[Lemme~9.3 (a)]{B}.
\end{proof}

We also  record for future use the following fact.
\begin{lem}\label{lem:rgood}  
	Let $\tilde s \in  \widetilde  \bG^{*F}$ be a semisimple element  and set $s=i^*(\tilde s)$. Then $a_{\widetilde \bG^{*F}}  (\tilde  s) =  a_{\bG^{*F}  }(s)  $  and $r_{\widetilde \bG^{*F}}(\tilde s) =  r_{\bG^{*F}}(s)$. 
\end{lem}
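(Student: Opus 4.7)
My plan is to reduce the lemma to proving $a_{\widetilde\bG^{*F}}(\tilde s) = a_{\bG^{*F}}(s)$; the equality $r_{\widetilde\bG^{*F}}(\tilde s) = r_{\bG^{*F}}(s)$ will then follow immediately, as $r_H(h)$ depends only on $a_H(h)$ and the prime $\ell$.

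For the $a$ equality, the key structural input is that the regular embedding $i\colon \bG \to \widetilde\bG$ dualizes to a surjective morphism $i^*\colon \widetilde\bG^* \to \bG^*$ of connected reductive groups whose kernel $\bK := \ker i^*$ is a connected central subtorus of $\widetilde\bG^*$; in particular $\bK \subseteq Z(\widetilde\bG^*)$ and $\bG^* \cong \widetilde\bG^*/\bK$. I would then establish the identity $(i^*)^{-1}(Z(\bG^*)) = Z(\widetilde\bG^*)$, equivalently $Z(\widetilde\bG^*/\bK) = Z(\widetilde\bG^*)/\bK$. For this, I would use the decomposition $\widetilde\bG^* = [\widetilde\bG^*, \widetilde\bG^*] \cdot Z(\widetilde\bG^*)^\circ$ and observe that if $g \in \widetilde\bG^*$ maps into $Z(\widetilde\bG^*/\bK)$, then the map $h \mapsto [g, h]$ is a group homomorphism $\widetilde\bG^* \to \bK$ (since $\bK$ is central, as is routinely verified), which is trivial on $Z(\widetilde\bG^*)^\circ$ and trivial on the semisimple (hence perfect) factor $[\widetilde\bG^*, \widetilde\bG^*]$, forcing $g \in Z(\widetilde\bG^*)$.

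Consequently, for every positive integer $k$ we have $\tilde s^k \in Z(\widetilde\bG^*) \iff s^k = i^*(\tilde s^k) \in Z(\bG^*)$, and since $\tilde s$ and $s$ are $F$-fixed, so are all their powers, so the biconditional lifts verbatim to $\tilde s^k \in Z(\widetilde\bG^*)^F \iff s^k \in Z(\bG^*)^F$. To reach the statement of the lemma I would then invoke the identifications $Z(\widetilde\bG^{*F}) = Z(\widetilde\bG^*)^F$ and $Z(\bG^{*F}) = Z(\bG^*)^F$, already used tacitly in the paper (cf.\ the proof of Lemma~\ref{lem:thetatildeinG}(ii)); taking the smallest $k$ on each side then gives $a_{\widetilde\bG^{*F}}(\tilde s) = a_{\bG^{*F}}(s)$.

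The only delicate point — and the one that most needs a pointer to the literature — is precisely this last identification of the center of the finite group of Lie type with the $F$-fixed points of the algebraic center. Once that is granted, the rest of the argument is a direct consequence of the structure of regular embeddings and the standard decomposition of connected reductive groups.
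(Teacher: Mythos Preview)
Your proof is correct and takes essentially the same approach as the paper: both rest on the identity $(i^*)^{-1}(Z(\bG^*)) = Z(\widetilde\bG^*)$, which the paper cites from \cite[Proposition~2.5]{B} rather than proving via the commutator argument you give. The paper obtains the reverse inequality $a_{\bG^{*F}}(s) \leq a_{\widetilde\bG^{*F}}(\tilde s)$ directly from the surjectivity of $i^*$ on $F$-fixed points (a Lang--Steinberg consequence, \cite[Corollaire~2.7]{B}), which sidesteps one of the two center identifications $Z(H^F) = Z(H)^F$ you flag as delicate, but otherwise the arguments coincide.
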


\begin{proof} 
	Let $a= a_{\bG^{*F}}(s)$. Then $i^*(\tilde s ^a)  = s^a \in   Z(\bG^{*F}) \leq Z(\bG^*)$. Now since $i^*: \widetilde \bG^* \to \bG^*$ is a surjective homomorphism with kernel a central torus of $\widetilde \bG^*$ (see \cite[Proposition~2.5]{B}), $i^{*-1}( Z(\bG^*)) = Z(\widetilde \bG^*)$. Hence $\tilde s^ a \in Z(\widetilde \bG^*) \cap \widetilde \bG^{*F} =Z(\widetilde \bG^{*F})$ showing that $a_{\widetilde \bG^{*F}}(\tilde s) \leq a_{\bG^{*F}  }(s)$. The reverse inequality is immediate from the fact that the restriction of $i^*$ to $\widetilde \bG^{*F}$ is surjective (see \cite[Corollaire~2.7]{B}).
 \end{proof}

\subsection{Lusztig series}
For  a  semisimple element $s$ of $ \bG^{*F}\!$, we  denote by   $\cE(\bG^F, s) \subseteq  \mbox{Irr} (\bG^F) $ the  rational  Lusztig series corresponding to the $\bG^{*F}$-conjugacy class of $s$. Recall that  $\cE(\bG^F, s)$ is the set consisting of the irreducible  characters for which $\langle \chi, R_{\bT}^{\bG}(\theta) \rangle \ne  0 $ for some $(\bT, \theta) \in \nabla (\bG, F, s) $. By results of Lusztig, the series $\cE(\bG^F, s) $ as $s$ runs over a set of representatives of $\bG^F $-conjugacy classes of semi-simple elements of $\bG^F$  partition $\Irr(\bG^F)$.

\begin{lem}\label{lem:lusztiggalois}  
	Let  $ \chi \in \Irr(\bG^F)$, $ (\bT, \theta) \in \nabla (\bG, F) $ and let  $s \in \bG^{*F}$ be a semisimple $\ell' $-element.
	\begin{enumerate} [(i)] 
		\item  For any $\varphi \in  \textnormal{Aut}(\bar K)$, $ \langle ^{\varphi} \chi  \, ,  \,  \!R_{\bT}^{\bG} (\,^{\varphi} \theta) \rangle =  \langle  \chi \, ,  \,  \!R_{\bT}^{\bG} (\theta) \rangle  $.
		\item $ \chi \in \cE (\bG^F, s) $ if and only if $ \,^{\hat \sigma} \chi \in \cE(\bG^F, s^{\ell} ) $. 
	\end{enumerate}
\end{lem}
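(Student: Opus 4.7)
For part~(i), the main step is the auxiliary equivariance
\[ {}^{\varphi}R_{\bT}^{\bG}(\theta) = R_{\bT}^{\bG}({}^{\varphi}\theta) \]
as virtual characters of $\bG^F$. To prove it I will invoke the explicit Deligne--Lusztig character formula (as in \cite[Prop.~12.2]{D/M}): for $g = su \in \bG^F$ with Jordan decomposition,
\[ R_{\bT}^{\bG}(\theta)(su) = \frac{1}{|C_{\bG}^{\circ}(s)^F|}\sum_{h \in \bG^F,\ h^{-1}sh \in \bT} Q_{h^{-1}\bT h}^{C_{\bG}^{\circ}(s)}(u)\,\theta(h^{-1}sh), \]
in which the Green functions $Q$ are integer-valued and the normalising factor is rational. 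Applying $\varphi$ to this expression therefore affects only the values $\theta(h^{-1}sh)$, yielding the asserted equality. From here (i) follows formally: one has $\langle {}^{\varphi}\alpha, {}^{\varphi}\beta\rangle = \varphi(\langle \alpha, \beta\rangle)$ for all class functions $\alpha,\beta$ on $\bG^F$, and the inner product of an irreducible character with a virtual character is an integer, hence fixed by $\varphi$.

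For the forward direction of part~(ii), assume $\chi \in \cE(\bG^F, s)$, so $\langle \chi, R_{\bT}^{\bG}(\theta)\rangle \neq 0$ for some $(\bT, \theta) \in \nabla(\bG, F, s)$. Since $s$ is $\ell'$ and the duality isomorphism $\bT^{*F} \cong \Irr(\bT^F)$ preserves orders, $\theta$ has $\ell'$-order, so Lemma~\ref{lem:lingalois} gives ${}^{\hat\sigma}\theta = \theta^{\ell}$, while Lemma~\ref{lem:duality}(i) (applied with $n = \ell$) gives $(\bT, \theta^{\ell}) \in \nabla(\bG, F, s^{\ell})$. Combining with part~(i),
\[ \langle {}^{\hat\sigma}\chi,\, R_{\bT}^{\bG}({}^{\hat\sigma}\theta)\rangle = \langle \chi,\, R_{\bT}^{\bG}(\theta)\rangle \neq 0 \]
with $(\bT, {}^{\hat\sigma}\theta) \in \nabla(\bG, F, s^{\ell})$, so ${}^{\hat\sigma}\chi \in \cE(\bG^F, s^{\ell})$.

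The converse is the same argument run backwards, exploiting that raising to the $\ell$-th power is a bijection on the $\ell'$-cyclic group $\langle s\rangle$. Given $(\bT, \psi) \in \nabla(\bG, F, s^{\ell})$ witnessing ${}^{\hat\sigma}\chi \in \cE(\bG^F, s^{\ell})$, the character $\psi$ has $\ell'$-order equal to $|s|$, so there exists an integer $m$ with $\ell m \equiv 1 \pmod{|s|}$. Then $\psi^{m}$ satisfies ${}^{\hat\sigma}(\psi^{m}) = \psi^{\ell m} = \psi$ by Lemma~\ref{lem:lingalois}, and $(\bT, \psi^{m}) \in \nabla(\bG, F, (s^{\ell})^{m}) = \nabla(\bG, F, s)$ by Lemma~\ref{lem:duality}(i). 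Part~(i) then gives $\langle \chi, R_{\bT}^{\bG}(\psi^{m})\rangle = \langle {}^{\hat\sigma}\chi, R_{\bT}^{\bG}(\psi)\rangle \neq 0$, so $\chi \in \cE(\bG^F, s)$. Beyond the appeal to the character formula and the integrality of Green functions, the proof is purely formal bookkeeping with the duality; I do not anticipate any serious obstacle.
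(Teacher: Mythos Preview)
Your proof is correct and follows essentially the same approach as the paper. Both arguments use the character formula \cite[Prop.~12.2]{D/M} to obtain ${}^{\varphi}R_{\bT}^{\bG}(\theta) = R_{\bT}^{\bG}({}^{\varphi}\theta)$ and deduce (i) from integrality of the inner product; for (ii) the paper simply invokes Lemma~\ref{lem:thetatildeinG}(i), whose content is exactly the combination of Lemmas~\ref{lem:lingalois} and~\ref{lem:duality}(i) that you spell out directly.
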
 

\begin{proof}   
	By the character formula \cite[Proposition 12.2]{D/M}, $\,^{\varphi} R_{\bT} ^{\bG} (\theta) =  R_{\bT} ^{\bG} (^{\varphi} \theta) $. Hence,
	 \[ \langle \chi \,, \, R_{\bT}^{\bG}( \theta) \rangle = \,^{\varphi} \langle \chi \, , R_{\bT}^{\bG}( \theta) \rangle=
	 \langle {^\varphi} \chi  \,,  \,^{\varphi} R_{\bT}^{\bG}(\!  \theta ) \rangle = \langle ^{\varphi} \chi \,   , \, R_{\bT}^{\bG}( ^{\varphi}\theta) \rangle 
	\]
	where the second equality holds since $R_{\bT}^{\bG} (\theta) $ is a generalised character. This proves (i). Part (ii) follows from (i) and Lemma~\ref{lem:thetatildeinG} (i).
\end{proof}

The next  lemma recalls   a consequence of the Jordan decomposition of characters. 
Recall that for a ${\bar {\mathbb Q}_{\ell}}$-valued class function  $\chi $ of $\bG^F$ the uniform projection of $\chi$ is the orthogonal projection (with respect to $\langle \, , \, \rangle$) of $\chi $ onto the subspace of class functions generated by  $R_{\bT} ^{\bG} (\theta)$, $ (\bT, \theta) \in \nabla (\bG, F)$.       

\begin{lem}\label{lem:uniformprojections} Suppose that $ Z(\bG)$ is connected   and let $s$ be  a semi-simple element of ${\bG^*}^F\!$.    Suppose that     $ C_{\bG^*}(s) $    has all   classical  components. Then each $\chi \in \cE(\bG^F, s) $ is uniquely determined by its uniform projection.   
\end{lem}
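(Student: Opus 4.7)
The plan is to invoke Lusztig's Jordan decomposition of characters to reduce to unipotent characters of a smaller reductive group, and then to apply the known structural properties of classical unipotent characters.

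Since $Z(\bG)$ is connected, $\bH := C_{\bG^*}(s)$ is a connected reductive $F$-stable subgroup of $\bG^*$, and Lusztig's Jordan decomposition yields a bijection $J_s \colon \cE(\bG^F, s) \to \cE(\bH^F, 1)$. Combining the Deligne--Lusztig character formula with the natural correspondence between $F$-stable maximal tori $\bT^*$ of $\bG^*$ containing $s$ and $F$-stable maximal tori of $\bH$, I would show that the scalar products $\langle \chi, R_\bT^\bG(\theta)\rangle$ for $(\bT,\theta)\in\nabla(\bG,F,s)$ correspond, via an explicit invertible linear transformation, to the scalar products $\langle J_s(\chi), R_{\bT'}^\bH(1)\rangle$ as $\bT'$ ranges over $F$-stable maximal tori of $\bH$. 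Consequently, the uniform projection of $\chi$ determines and is determined by that of $J_s(\chi)$, reducing the lemma to the analogous statement for unipotent characters of $\bH^F$.

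Since unipotent characters of $\bH^F$ factor as external tensor products over the simple components of $\bH$, compatibly with uniform projection, it suffices to handle the case of a simple group of classical type. For such a group, Lusztig's classification arranges unipotent characters into families, each equipped with an explicit Fourier matrix relating unipotent characters to almost characters. In classical types the finite groups attached to families are elementary abelian $2$-groups, and the submatrix of the Fourier matrix indexed by the uniform almost characters (those corresponding to elements of the Weyl group's dual) has pairwise distinct rows across each family; hence the uniform projection is injective on every family, and therefore on all of $\cE(\bH^F, 1)$.

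The main obstacle is the precise compatibility in the first step: tracking the signs and scalars arising from Lusztig induction between $\bG$ and a Levi of $\bG$ dual to the embedding $\bH \hookrightarrow \bG^*$. This compatibility is nonetheless well documented in the literature (Lusztig; Digne--Michel), and once it is in hand, the combinatorial injectivity of the uniform projection on classical unipotent families is a direct reading of Lusztig's explicit tables of Fourier matrices in types $A$, $B$, $C$, $D$.
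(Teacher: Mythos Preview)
Your proposal is correct and follows essentially the same route as the paper: reduce via Jordan decomposition to the unipotent case for $C_{\bG^*}(s)^F$, then use that unipotent characters of classical groups are determined by their uniform projections. The only difference is one of packaging: the paper dispatches both steps by direct citation (Lusztig \cite[4.23]{Lu} for the compatibility of uniform projections under Jordan decomposition, and \cite[8.1(a)]{Lu88} for the classical unipotent statement), whereas you spell out the mechanism of the first step and sketch a Fourier-matrix argument for the second---which is precisely the content of Lusztig's cited result.
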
  

\begin{proof}  The hypothesis   implies that   the unipotent characters of $C_{\bG^*}(s)^F$ are uniquely determined by their uniform projections  \cite[8.1(a)]{Lu88}.  Then   by Jordan decomposition   (\cite[4.23] {Lu}, see also  \cite[Theorem 15.8]{C/E3})  each $\chi \in \cE  (\bG^F, s )$ is uniquely determined by its uniform projection.
\end{proof} 

\begin{lem}\label{lem:conjofchi}   
 Let    	 $s \in \bG^{*F}$  be   a semisimple  element and $\varphi \in \mbox{Aut}(\bar K)$. Suppose that $C_{\bG^*}^{\circ} (s)$ has only classical components.	Suppose further that there exists $\tilde s \in \widetilde \bG^{*F}$ with $i^* (\tilde s) = s$ and  a linear character $\tilde \eta \in \textnormal{Irr}(\widetilde \bG^F)$ such that $^\varphi {\widetilde \theta } = \widetilde \theta \tilde \eta_{\widetilde \bT^F}$ for all $(\widetilde \bT, \widetilde \theta) \in \nabla (\widetilde \bG, F, \tilde s)$.  Then for any $\chi \in \cE(\bG^F, s)$, there exists an $x \in \widetilde \bG^F$ such that 
	\[ ^\varphi \chi = {^x{( \chi \tilde \eta_{\bG^F})}}.\]
\end{lem}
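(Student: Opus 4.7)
The plan is to lift $\chi$ to a character $\tilde\chi$ of $\widetilde{\bG}^F$, prove the analogous identity ${}^\varphi\tilde\chi = \tilde\chi\tilde\eta$ on $\widetilde{\bG}^F$ using uniqueness of uniform projections, and finally descend via Clifford theory.

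\textbf{Lifting.} Since $\widetilde\bG^F/\bG^F$ is abelian, Clifford theory furnishes some $\tilde\chi \in \Irr(\widetilde\bG^F)$ with $\chi$ a constituent of $\tilde\chi_{\bG^F}$. Such a $\tilde\chi$ lies in $\cE(\widetilde\bG^F, \tilde s')$ for some preimage $\tilde s'$ of $s$ under $i^*$. The distinct preimages of $s$ differ by elements of $\ker i^* \subseteq Z(\widetilde\bG^*)$, which, under Lemma~\ref{lem:duality}(ii), correspond to tensoring by linear characters of $\widetilde\bG^F$. Replacing $\tilde\chi$ by its product with a suitable linear character, I may assume $\tilde\chi \in \cE(\widetilde\bG^F, \tilde s)$.

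\textbf{Identity on $\widetilde\bG^F$.} Let $t \in Z(\widetilde\bG^*)^F$ correspond to $\tilde\eta$ via Lemma~\ref{lem:duality}(ii). For each $(\widetilde\bT, \widetilde\theta) \in \nabla(\widetilde\bG, F, \tilde s)$, Lemma~\ref{lem:lusztiggalois}(i) applied to $\widetilde\bG$, the hypothesis ${}^\varphi\widetilde\theta = \widetilde\theta\tilde\eta_{\widetilde\bT^F}$, and the identity $R_{\widetilde\bT}^{\widetilde\bG}(\widetilde\theta\tilde\eta_{\widetilde\bT^F}) = R_{\widetilde\bT}^{\widetilde\bG}(\widetilde\theta)\cdot\tilde\eta$ combine to give
\[
\langle {}^\varphi\tilde\chi,\, R_{\widetilde\bT}^{\widetilde\bG}(\widetilde\theta)\tilde\eta\rangle = \langle\tilde\chi,\, R_{\widetilde\bT}^{\widetilde\bG}(\widetilde\theta)\rangle = \langle\tilde\chi\tilde\eta,\, R_{\widetilde\bT}^{\widetilde\bG}(\widetilde\theta)\tilde\eta\rangle.
\]
The pairs $(\widetilde\bT, \widetilde\theta\tilde\eta_{\widetilde\bT^F})$ exhaust $\nabla(\widetilde\bG, F, \tilde s t)$ by the standard compatibility of the duality bijection with central elements, and both ${}^\varphi\tilde\chi$ and $\tilde\chi\tilde\eta$ belong to $\cE(\widetilde\bG^F, \tilde s t)$. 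Because $\widetilde\bG$ has connected center, $C_{\widetilde\bG^*}(\tilde s t) = C_{\widetilde\bG^*}(\tilde s)$ is connected and maps surjectively onto $C_{\bG^*}^\circ(s)$ with central-torus kernel, so the two centralizers share the same classical component types. Lemma~\ref{lem:uniformprojections} then forces ${}^\varphi\tilde\chi = \tilde\chi\tilde\eta$.

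\textbf{Descent.} Restricting to $\bG^F$ and using the previous identity gives $({}^\varphi\tilde\chi)_{\bG^F} = (\tilde\chi\tilde\eta)_{\bG^F} = \tilde\chi_{\bG^F}\cdot\tilde\eta_{\bG^F}$. Writing $\tilde\chi_{\bG^F} = e\sum_j {}^{x_j}\chi$ over a transversal $\{x_j\}$ of the inertia group of $\chi$ in $\widetilde\bG^F$, and noting that $\tilde\eta_{\bG^F}$ is invariant under $\widetilde\bG^F$-conjugation (as the restriction of a class function of $\widetilde\bG^F$), both sides expand as $e\sum_j {}^{x_j}({}^\varphi\chi)$ and $e\sum_j {}^{x_j}(\chi\tilde\eta_{\bG^F})$ respectively. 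Matching irreducible constituents yields ${}^\varphi\chi = {}^x(\chi\tilde\eta_{\bG^F})$ for some $x \in \widetilde\bG^F$.

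\textbf{Main obstacle.} The crux lies in the second step: correctly identifying the Lusztig series $\cE(\widetilde\bG^F, \tilde s t)$ containing both ${}^\varphi\tilde\chi$ and $\tilde\chi\tilde\eta$ and verifying that its associated centralizer in $\widetilde\bG^*$ retains the classical-components hypothesis, so that Lemma~\ref{lem:uniformprojections} can indeed be invoked on $\widetilde\bG^F$.
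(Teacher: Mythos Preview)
Your proof is correct and follows essentially the same three-step strategy as the paper: lift $\chi$ to $\tilde\chi \in \cE(\widetilde\bG^F,\tilde s)$, compare uniform projections of ${}^\varphi\tilde\chi$ and $\tilde\chi\tilde\eta$ to conclude they are equal via Lemma~\ref{lem:uniformprojections}, then descend by Clifford theory. The only cosmetic differences are that the paper obtains the lift directly by citing \cite[Proposition~11.7]{B} rather than adjusting by a linear character, and that the paper phrases the middle step as equality of full uniform projections (noting $\langle\tilde\chi,R_{\widetilde\bT}^{\widetilde\bG}(\tilde\theta)\rangle=0$ outside $\nabla(\widetilde\bG,F,\tilde s)$) rather than explicitly working inside $\cE(\widetilde\bG^F,\tilde s t)$.
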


\begin{proof}
	Let $\chi \in \cE(\bG^F, s)$ and let $\tilde \chi \in \textnormal{Irr}(\widetilde \bG^F  \,  | \,  \chi) \cap   \cE(\widetilde \bG^F, \tilde s)$ (for the existence of $\tilde \chi $ see \cite[Proposition 11.7]{B}). Let $(\widetilde \bT, \tilde{\theta}) \in \nabla (\widetilde \bG, F, \tilde {s})$. By assumption,
	\[ \langle \tilde \chi \, , \, R_{\widetilde \bT}^{\widetilde \bG}(\tilde \theta) \rangle 
	= \langle ^{^\varphi}\!\tilde \chi\, , \, \!R_{\widetilde \bT}^{\widetilde \bG}(^{^\varphi} \! \tilde \theta ) \rangle 
	= \langle ^{^\varphi}\!\tilde \chi\, , \, \!R_{\widetilde \bT}^{\widetilde \bG}(\tilde \theta \tilde \eta_{\widetilde \bT^F}) \rangle, \]
	where the first equality holds by Lemma~\ref{lem:lusztiggalois}. On the other hand, since $\tilde \eta \in \textnormal{Irr} (\widetilde \bG^F)$ is linear,  
	\[ \langle \tilde \chi\, ,\,  R_{\widetilde\bT}^{\widetilde\bG}(\tilde \theta) \rangle 
	= \langle \tilde\chi \tilde \eta\, ,\,  R_{\widetilde \bT}^{\widetilde \bG}(\tilde \theta) \tilde \eta \rangle 
	= \langle \tilde \chi \tilde \eta\, , \, R_{\widetilde \bT}^{\widetilde \bG}(\tilde \theta \tilde\eta_{\widetilde\bT^F}) \rangle,\]
	where now the second equality holds by the character formula \cite[Proposition 12.2]{D/M}. Since $ \langle \tilde \chi\,  , \, R_{\widetilde \bT}^{\widetilde \bG}(\tilde \theta) \rangle = 0$ for all $(\widetilde \bT, \tilde \theta) \in \nabla (\widetilde \bG, F) \setminus \nabla (\widetilde \bG, F, \tilde s)$, it follows that $^{\varphi} \tilde \chi$ and $\tilde \chi \tilde \eta$ have the same uniform projections. By definition  $Z(\widetilde \bG^*)$ is connected, so $C_{\widetilde \bG^*}(\tilde s)$ is connected. Since  $C_{\bG^*}^{\circ}(s)$  has all classical components the  same is true of  $C_{\widetilde \bG^*}(\tilde s)$. Hence by Lemma~\ref{lem:uniformprojections}, $\,^{\varphi}\tilde \chi =  \tilde \chi \tilde  \eta $. It follows that $^{\varphi}\chi$ and $\chi \tilde \eta_{\bG^F}$ are both elements of $ \textnormal{Irr}(\bG^F \, | \, ^{\varphi} \tilde \chi) = \textnormal{Irr}(\bG^F \,|\, \tilde \chi \tilde  \eta )$, proving the result. 
\end{proof}

\subsection{Blocks.} 
For $s \in \bG^{*F}$ a semisimple $\ell'$-element  we denote by $\cE_{\ell} (\bG^F, s) $ the  union of the rational Lusztig series $\cE(\bG^F,  t) $, where $ t $ runs over a set of representatives of $\bG^{*F}$-conjugacy classes of the semisimple elements of $\bG^{*F}$ whose $\ell'$-part is $\bG^{*F}$-conjugate to $s$. We recall that  by results of Brou\'{e}-Michel and Hiss \cite[Theorem 9.12]{C/E3}, $\cE_{\ell} (\bG^F, s)$ is the union of the irreducible characters in some subset of the set of blocks of $\cO \bG^F$ and if $b$ is an $\ell$-block of $\cO \bG^F $ such that $\mbox{Irr}(b) \subseteq \cE_{\ell}(\bG^F,  s)$, then  $\mbox{Irr}(b)\cap \cE(\bG^F, s) \ne \emptyset $. If $\mbox{Irr}(b) \subseteq \cE_{\ell}(\bG^F, s)$  we  say ``$b$ is in $ \cE_{\ell}(\bG^F, s)$" and write $b \in \cE_{\ell}(\bG^F,s)$.

Since the largest order of semisimple elements of the groups $\bG^{*F^n}$ become arbitrarily large as $ n \to \infty $,  it is easy to see from the above discussion that there is no finite subfield  $k_0$ of $k$ such that $ b \in k_0 \bG^{F^n}$ for  all $n$ and all blocks $b$ of $kG^{F^n} $. We give an explicit example below.

\begin{exa} \label{ex:idempotentcoefficients} 
	Let $\bG = \mathrm{PGL}_r (\overline \FF_p)$, $r\geq 2$,  $F:\bG \to \bG$ be the standard Frobenius over ${\mathbb F}_p$  with $\bG^{F^n} \cong {\rm PGL}_r(p^n) $, $\bG^* = \mathrm {SL}_r (\overline \FF_p)$, $\bG^{*F^n} \cong \mathrm {SL}_r (p^n)$ for $n \in {\mathbb N}$. Choose a strictly increasing sequence of positive integers  $(n_i)_{i \in {\mathbb N}} $ such that $|\bG^{*F^{n_i}}|_{\ell} = |\bG^{*F^{n_j}}|_{\ell}$ for all $i, j$ and set $G_i = \bG^{F^{n_i}}$, $G_i^* = \bG^{*F^{n_i}}$. For each $i$, let $\lambda_i \in \FF_{p^{n_i}}$ be an element of order $(p^{n_i}-1)_{\ell'}$ and let $ s_i \in G_i^*$ be   a diagonal matrix with diagonal entries $\lambda_i, \lambda_i ^{-1} , 1, \ldots, 1$. Let $d_i$ be the smallest positive integer such that $s_i^{\ell^{d_i} }$ is $\bG^*$-conjugate to $s_i$. Since $(p^{n_i}-1)_{\ell'}\to \infty$ as $i \to \infty $, we have that $ d_i \to \infty $ as $i\to \infty $.       

	For each $ i\in {\mathbb N} $, choose a  block $b_i$ of $ k G_i $   in $ \cE_{\ell}(G_i, s_i)$ and a character $\chi_i  \in  \cE(G_i, s_i)  \cap  \Irr(b_i)$. By Lemma~\ref{lem:lusztiggalois}, $\,^{\hat \sigma^m}\! \chi \in   \cE(G_i, s_i^{\ell^m})$ for $ m \in {\mathbb N}$. Hence by Lemma~\ref{lem:charactergalois}, ${ \sigma^m }(b_i) $ is in $   \cE_{\ell}(G_i, s_i^{\ell^m})$. It follows that if $b_i = { \sigma^m }(b_i)$ then $m$ is divisible by $d_i$. Hence the smallest subfield $k_0$ of $k$ such that $b_i \in k_0G$ contains at least $p^{d_i}$ elements. In particular, there exists no finite subfield $k_0$ of $k$ such that $b_i \in  k_0G $ for all $i$. Since the $\ell$-part of the order of the $G_i$'s is equal, this also shows  that there is no finite subfield $k_0$ of $k$ such that $b$ belongs to $k_0 G$ for all blocks $b$ of $kG$ with a given defect.
\end{exa} 

\section{Applications of the Bonnaf\'{e}-Dat-Rouquier theorem}\label{sec:BDR}

We keep all the notation of the sections before Example~\ref{ex:idempotentcoefficients}. In particular, $\bG $ is a connected reductive algebraic group defined over $\overline{\FF}_p$ with  Frobenius endomorphism $F: \bG \rightarrow \bG$.  We  fix a finite extension ${\mathbb Q}_{\ell} \subseteq   K'\subseteq \bar K$ of ${\mathbb Q}_{\ell}$ in $\bar K$ such that $K'$
is a splitting field for all sections of $\bG^F$ and of $\bG^{*F}$ and let $ \cO'$ be the ring of integers of $K'$ over ${\mathbb Z}_{\ell}$.   

The proof of our main theorem relies in a fundamental way on the reduction theorem of \cite[Section 7]{B/D/R} which we now describe.  Let $s \in \bG^{*F}$ be a semisimple $\ell'$-element. Let $\bL^* = C_{\bG^*}\left(Z^{\circ}\left( C^{\circ}_{\bG^*}(s) \right)\right)$ be the minimal Levi subgroup of $\bG^*$ containing $C^{\circ}_{\bG^*}(s)$ and let $\bL$ be a Levi subgroup  of $\bG$  dual to $\bL^*$ as in \cite[Chapter 13]{D/M}. Since $s \in \bG^{*F}$, $\bL^*$ and $\bL$ are $F$-stable. Further, the duality between $\bG$ and $\bG^*$ induces an $F$-equivariant isomorphism $N_{\bG} (\bL)  /\bL \to   N_{\bG^*} (\bL^*)/\bL^* $, ($w \mapsto w^*$) and a  bijection $\nabla( \bL, F)/\bL^F \to \nabla^*(\bL^*,  F)/\bL^{*F}  $ which is equivariant with respect to the action of $(N_{\bG} (\bL)  /\bL)^F $ on $\nabla( \bL, F)/\bL^F $ and  the action of $ (N_{\bG^*} (\bL^*)/\bL^*)^F $ on $\nabla^*(\bL^*,  F)/\bL^{*F} $: for all   $w\in (N_{\bG}(\bL)/\bL )^F $,  $(\bT, \theta)  \in  \nabla(\bL, F)$, and $  (\bT^*, s) \in \nabla^*(\bL^*, F) $, 
\begin{equation} \label{eq:dualLevi} (\bT, \theta) \stackrel{\bL}{\leftrightarrow}  (\bT^*,  s) \iff   \,^w(\bT, \theta) \stackrel{\bL}{\leftrightarrow}  \,^{w^*}\!(\bT^*,  s). \end{equation} 

Note that we identify $ \left( N_{\bG}(\bL)/\bL \right)^F $ and $N_{\bG^F}(\bL)/ \bL^F $. Let $N^* = C_{\bG^*}(s)^F. \bL^*$ and define $N$ to be the subgroup of $N_{\bG}(\bL)$ containing $\bL$ such that $N / \bL$ corresponds to $N^* / \bL^*$ via the   above  isomorphism between $N_{\bG^*}(\bL^*)/\bL^*$ and $N_{\bG}(\bL)/\bL$. As discussed in \cite[Section 7A]{B/D/R}, $N^F / \bL^F \cong N^{*F}/ \bL^{*F} \subseteq C_{\bG^*}(s)^F/ C_{\bG^*}^{\circ}(s)^F$. 
Let $e_s ^{\bG^F}$ (respectively $e_S^{\bL^F} $) denote the sum of all blocks  of $\cO'\bG^F$ (respectively $\cO' \bL^F $) in $\cE_{\ell}(\bG^F, s) $ (respectively $\cE_{\ell}(\bL^F, s)) $. 

 Let $\bV $ be the unipotent radical of a  parabolic subgroup of $\bG$ containing $\bL$ as a Levi subgroup and let $ {\bf Y}_{\bV} =  \{ g \bV \in  \bG/ \bV \mid g^{-1} F(g) \in \bV. F(\bf{V})  \}  $  be the corresponding Deligne-Lusztig $(\bG, \bL)$-variety    \cite[2.E]{B/D/R}. Let $G\Gamma_c(Y_{\bV},\cO')^{\textnormal{red}}$  be the complex of $\ell$-permutation $(\cO' \bG^F, \cO' \bL^F )$-bimodules defined in \cite[2.A, 2.C]{B/D/R}.

\begin{thm} \label{thm: BDR} \cite[Theorem 7.7]{B/D/R}  
	Suppose that $N^F/\bL^F $ is a cyclic group. The complex $G\Gamma_c(Y_{\bV},\cO')^{\textnormal{red}}e_s^{\bL^F}$ extends to a splendid Rickard complex $C$ of \break $(\cO' \bG^F e_s^{\bG^F},\cO'N^F e_s^{\bL^F})$-bimodules. 

	There is a bijection $b\mapsto b'$ between  blocks  of $\cO' \bG^F$ in $\cE_{\ell}(\bG^F, s)$ and blocks of $\cO' N^F $ covering blocks of  $\cO'\bL^F $ in $\cE_{\ell}(\bL^F,  s)$ determined by the rule $ bC = Cb'$. Given a block $b$ of $\cO' \bG^F$ in $\cE_{\ell}(\bG^F, s)$ the complex  $bCb' $ is a splendid Rickard complex of $(\cO'\bG^Fb, \cO' N^Fb' )$-bimodules  and  such that  $ H^{\dim ({\bf Y}_{\bV})}(bCb')$ induces a Morita equivalence  between $\cO' \bG^Fb $ and $\cO' N^F b'$.
\end{thm}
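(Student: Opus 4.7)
The plan is to recast the statement as an extension and lifting problem for the Deligne-Lusztig complex $G\Gamma_c(Y_{\bV},\cO')^{\textnormal{red}}$. I would first recall that $Y_{\bV}$ carries a left action of $\bG^F$ and a right action of $\bL^F$, and that $\bL$ normalises $\bV$, so the $(\bG^F,\bL^F)$-bimodule structure on compactly-supported cohomology is built into the Deligne-Lusztig formalism. The first task is to enlarge the right action to an action of $N^F$ on the relevant cohomological object, at least after multiplying by the central idempotent $e_s^{\bL^F}$. The key geometric ingredient here is that $C^{\circ}_{\bG^*}(s)\subseteq \bL^*$ forces $\bL^{*F}$ and $C_{\bG^*}(s)^F$ to interact in a controlled way, so that conjugation by elements of $N^F$ permutes the set $\nabla(\bL,F,s)$ only by elements of $\bL^F$; this is what allows the right $\bL^F$-action on $G\Gamma_c(Y_{\bV},\cO')^{\textnormal{red}}e_s^{\bL^F}$ to be extended to a right $N^F$-action. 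The cyclicity hypothesis on $N^F/\bL^F$ is what eliminates an obstruction cocycle to such an extension: cyclic extensions are classified by a single element of a character group, and after modifying by a suitable linear character one checks this class vanishes.

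Once the extension $C$ is in hand, I would verify the three properties packaged in ``splendid Rickard complex of $(\cO'\bG^Fe_s^{\bG^F},\cO'N^Fe_s^{\bL^F})$-bimodules'': the $\ell$-permutation property is inherited from $G\Gamma_c(Y_{\bV},\cO')^{\textnormal{red}}$ because $Y_{\bV}$ is smooth quasi-projective and the $\ell'$-stabilisers are tame; the vanishing of $\mathrm{End}$ of $C$ off degree zero reduces, by standard Deligne-Lusztig arguments (Mackey formula, orthogonality of $R_\bT^\bG$, and comparison with ordinary characters), to showing that the restriction of the Lusztig functor $R_\bL^\bG$ to $\cE_\ell(\bL^F,s)$ is a perfect isometry onto $\cE_\ell(\bG^F,s)$, which is \cite[Theorem 11.8]{B/D/R}; and the identification of $\mathrm{End}(C)$ with the source algebra yields that $C\otimes^{\mathbb L}_{\cO'N^Fe_s^{\bL^F}} C^\vee \simeq \cO'\bG^Fe_s^{\bG^F}$ and similarly on the other side.

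From the splendid Rickard equivalence between $\cO'\bG^Fe_s^{\bG^F}$ and $\cO'N^Fe_s^{\bL^F}$, the block bijection $b\mapsto b'$ is immediate: tensoring with $C$ sends central primitive idempotents to central primitive idempotents, so the rule $bC=Cb'$ defines a bijection between block idempotents on the two sides, and $bCb'$ is then a splendid Rickard complex of $(\cO'\bG^Fb,\cO'N^Fb')$-bimodules. The covering condition for $b'$ follows by tracking characters: by construction, the right $\bL^F$-module structure on $bC$ is supported on $\cE_\ell(\bL^F,s)$, hence $b'$ covers blocks of $\cO'\bL^F$ in that series. Finally, the statement that $H^{\dim(Y_{\bV})}(bCb')$ alone induces a Morita equivalence is the ``concentration in top degree'' phenomenon; I would deduce it from the observation that, after passing to $k$, the complex $\bar C$ is isomorphic in the homotopy category to a bounded complex whose cohomology is concentrated in a single degree provided $Y_{\bV}$ is affine (Broué-Michel), and then the Noether-Deuring argument (as in Lemma \ref{lem:BDRdescent}) transfers the Morita equivalence back to $\cO'$. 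The main obstacle I anticipate is the construction and independence (up to chosen sign/character twist) of the $N^F$-equivariant structure on $C$: this is precisely the point at which the cyclicity assumption on $N^F/\bL^F$ is unavoidable, and the careful bookkeeping of the extension via the isomorphism $Z(\bG^*)^F\to\Irr(\bG^F/\tau(\bG_{sc}^F))$ of Lemma \ref{lem:duality}(ii) is what ultimately rigidifies the construction.
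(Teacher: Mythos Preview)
The paper does not give a proof of this statement at all: Theorem~\ref{thm: BDR} is simply quoted from \cite[Theorem 7.7]{B/D/R} and used as a black box. There is therefore nothing in the present paper to compare your proposal against; the authors' contribution begins with Proposition~\ref{prop:modcentralBDR}, which applies Lemma~\ref{lem:BDRdescent} to descend the Morita equivalence produced by \cite{B/D/R} to $\cO_0$.

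As a sketch of what \cite{B/D/R} actually does, your outline is in the right spirit but glosses over the hardest parts. The extension of the $\bL^F$-action to an $N^F$-action on the complex is indeed the crux, and cyclicity of $N^F/\bL^F$ is used to kill an $H^2$-obstruction; however, in \cite{B/D/R} this is not done by a direct cocycle computation on $G\Gamma_c(Y_{\bV},\cO')^{\textnormal{red}}e_s^{\bL^F}$ but rather by first establishing a Rickard equivalence at the level of $\bL^F$ (the Bonnaf\'e--Rouquier theorem) and then invoking their general machinery on extending equivalences through $\ell'$-quotients. Your appeal to affineness and ``Brou\'e--Michel'' for concentration in top degree is not how the Morita statement is obtained there: the Morita equivalence in degree $\dim(Y_{\bV})$ comes from the fact that the Deligne--Lusztig induction functor truncated to $\cE_\ell(\bL^F,s)$ is already an equivalence of abelian categories (Bonnaf\'e--Rouquier), and the extension argument preserves this. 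Your citation of ``\cite[Theorem 11.8]{B/D/R}'' does not match the structure of that paper. If you wish to supply an independent proof, you would need to reconstruct a substantial portion of \cite{B/D/R}; for the purposes of the present paper, the correct move is simply to cite the result.
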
 

	The above Morita equivalences descend to  unramified      discrete valuation  rings and  are compatible   with taking quotients by central $\ell$-subgroups.  Let   $k'$ be the residue field of $\cO'$ and let $\cO_0 := W(k') \subseteq \cO'$ be the ring of Witt vectors of $k'$ in $\cO'$.  
    
\begin{prop} \label{prop:modcentralBDR}   
	Keep the notation  and hypothesis of Theorem~\ref{thm: BDR}.  Let $b$ be a block of $\cO' \bG^F$ in $\cE_{\ell}(\bG^F, s)$ and  let $b'$ be the block of $\cO' N^F$ in bijection with $b$. 

	Let $Z \leq Z(\bG^F)$ be an $\ell$-group and  let  $\bar b $  (respectively $\bar b' $) be the block of $\cO'(\bG^F/Z) $  (respectively  $\cO' (N^F/Z) $) dominated by $b$ (respectively $\bar b'$). Then $\cO_0(\bG^F/Z) \bar b$ and $\cO_0 (N^F/Z) \bar b' $ are  Morita equivalent. Consequently, $ \cO(\bG^F/Z)  \bar b$ and $\cO (N^F/Z) \bar b'$ are Morita equivalent.
\end{prop}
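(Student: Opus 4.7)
My approach is to first descend the splendid Rickard equivalence of Theorem~\ref{thm: BDR} from $\cO'$ to $\cO_0 := W(k')$ via Lemma~\ref{lem:BDRdescent}, and then pass to the $Z$-quotient by exploiting that $Z$ acts trivially via the diagonal on the resulting Morita bimodule.

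For the descent step, since $\cO_0$ and $\cO'$ share the residue field $k'$, the blocks $b$ of $\cO' \bG^F$ and $b'$ of $\cO' N^F$ correspond uniquely to blocks $b_0$ of $\cO_0 \bG^F$ and $b'_0$ of $\cO_0 N^F$. I would apply Lemma~\ref{lem:BDRdescent} to the splendid Rickard complex $bCb'$ of $(\cO' \bG^F b, \cO' N^F b')$-bimodules furnished by Theorem~\ref{thm: BDR}. This yields a splendid Rickard complex $C_0$ of $(\cO_0 \bG^F b_0, \cO_0 N^F b'_0)$-bimodules, and by the final assertion of Lemma~\ref{lem:BDRdescent}, the cohomology $M_0 := H^d(C_0)$ in degree $d = \dim Y_\bV$ induces a Morita equivalence between $\cO_0 \bG^F b_0$ and $\cO_0 N^F b'_0$.

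For the quotient step, the heart of the argument is to show that $Z$, which sits inside both $\bG^F$ and $N^F$ as a central $\ell$-subgroup, acts trivially on $M_0$ through the diagonal, i.e.\ $z \cdot m = m \cdot z$ for all $z \in Z$, $m \in M_0$. Geometrically, this reflects the fact that for $z \in Z(\bG) \cap \bL$, left translation by $z$ on the Deligne-Lusztig variety $Y_\bV$ agrees with right translation by $z$; this equality descends to the reduced cohomology complex $G\Gamma_c(Y_\bV,\cO')^{\textnormal{red}} e_s^{\bL^F}$ and is preserved by the extended $N^F$-action, since $Z$ is central in $N^F$. Given this diagonal triviality, applying the exact functor $\cO_0(\bG^F/Z) \otimes_{\cO_0 \bG^F} (-) \otimes_{\cO_0 N^F} \cO_0(N^F/Z)$ to $M_0$ produces a bimodule that realises a Morita equivalence between $\cO_0(\bG^F/Z) \bar b_0$ and $\cO_0(N^F/Z) \bar b'_0$. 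The statement over $\cO$ then follows by extension of scalars along the inclusion $\cO_0 \hookrightarrow \cO = W(\overline \FF_\ell)$.

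The main obstacle is verifying the diagonal triviality of the $Z$-action on $M_0$. While the underlying equality at the level of the variety $Y_\bV$ and the $\bL^F$-action is classical, one must carefully track this compatibility through both the passage to the reduced cohomology complex and the enlargement from the $\bL^F$-action to the $N^F$-action in the BDR construction, and deal with the subtlety that $Z \leq Z(\bG^F)$ need not coincide with $Z(\bG)^F$. Once this property is secured, the descent to $Z$-quotients is purely formal.
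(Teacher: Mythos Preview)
Your proposal is correct and follows essentially the same route as the paper: descend the splendid Rickard complex to $\cO_0$ via Lemma~\ref{lem:BDRdescent}, verify that the left and right actions of $Z$ on the Morita bimodule agree using the geometric identity $z\cdot g\bV = g\bV\cdot z$ on $Y_{\bV}$ and functoriality of $\ell$-adic cohomology, then pass to $Z$-quotients (the paper cites \cite[Lemma~2.7]{C/K/K/S} for this formal step) and finally extend scalars to $\cO$. Your worry about the enlargement from the $\bL^F$-action to the $N^F$-action is unnecessary: since $Z \leq \bL^F \leq N^F$, the right $Z$-action is already determined at the $\bL^F$-level, so the restriction argument suffices.
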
 

\begin{proof}   
	Note that $ Z \leq Z(\bL^F)$. By Theorem~\ref{thm: BDR}  and Lemma~\ref{lem:BDRdescent}, there exists an $(\cO_0\bG^Fb,  \cO_0N^Fb' )$-bimodule $M$ such that $M$ induces a Morita equivalence between $\cO_0\bG^Fb $ and  $\cO_0N^Fb' $  and such that $\cO'\otimes_{\cO}  M \cong H^{\dim ({\bf Y}_{\bV})}(bCb')$ as $(\cO'\bG^Fb, \cO'N^Fb')$-bimodules.

	For any  integer $i$, $H^i(bCb')$ is isomorphic to a summand of $ H^i(C)$ and 
	\[\Res^{\bG^F\times N^F}_{\bG^F \times \bL^F} H^i(C) =  H^i(G\Gamma_c(Y_{\bV},\cO')^{\textnormal{red}} e_s^{\bL^F})\]
	is isomorphic to a summand  of  $H^{i} (G\Gamma_c(Y_{\bV},\cO')^{\textnormal{red}})$. By   \cite[2.C]{B/D/R} 
	\[H^{i} (G\Gamma_c(Y_{\bV},\cO')^{\textnormal{red}})  \cong   H^{i} (G\Gamma_c(Y_{\bV},\cO'))   \cong H^i (R\Gamma_c (Y_{\bV},\cO')). \] 
	For any $ z \in Z$ and any $ g {\bf V} \in   \bf {Y}_{\bV} $, we have  $z.g {\bf V} = g {\bf V}. z $.  Hence, by the functoriality of $\ell$-adic cohomology with respect to finite morphisms, $ z u =uz $ for all $ u \in  H^i (R\Gamma_c (Y_{\bV},\cO')) $  and $z \in  Z$. By the above discussion it follows that $ z u =uz $ for  all  $ u \in  M$  and $z \in  Z$.   
	Now the first assertion result follows from \cite[Lemma~2.7]{C/K/K/S}. The second is immediate from the first as $ \cO_0  \subseteq \cO $.
\end{proof}

Thus the problem of bounding the Morita Frobenius number of $b$  is reduced to  the problem of  bounding the Morita Frobenius number of  $b'$. This in turn can be bounded by appealing to the rationality of unipotent character values. The two theorems below give instances of this  philosophy in action.

Let   $i : \bG \hookrightarrow \widetilde \bG$  be  a regular embedding as in Section ~\ref{subsec:alggroups} with dual $i^*:  \widetilde \bG^* \to  \bG^*$. Let $\widetilde \bL = Z(\widetilde \bG) \bL$ be a Levi subgroup of $\widetilde \bG$ and let $\widetilde \bL^*$ be  the  full inverse image  of $\bL^*$  under $i^*$. Then $Z(\widetilde \bL)$ is connected by \cite[Corollaire 4.4]{B} and $[\widetilde \bL , \widetilde \bL] = [\bL, \bL]$, hence the restriction of $i $ to $\bL$ is a regular embedding of $\bL $ into $\widetilde \bL$ with dual  $i^*: \widetilde \bL^* \rightarrow \bL^*$.
  
\begin{lem}\label{lem:tildeLactsonN}   
	The group $\widetilde \bL^F$ normalises $N^F$.
\end{lem}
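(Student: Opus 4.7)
The plan is to reduce the normalization claim for $\widetilde{\bL}^F$ acting on $N^F$ to the obvious fact that $\bL$ normalizes $N$, using that the extra factor $Z(\widetilde{\bG})$ in $\widetilde{\bL}$ acts trivially on $\bG$.

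First, I would recall the definitions: $\widetilde{\bL}=Z(\widetilde{\bG})\bL$ (stated just above the lemma) and $N$ is by construction a subgroup of $N_{\bG}(\bL)$ containing $\bL$, with $\bL\lhd N$. In particular $N\leq \bG\leq \widetilde{\bG}$, so any element of $Z(\widetilde{\bG})$ commutes with every element of $N$.

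Next, given $x\in\widetilde{\bL}^F$ and $n\in N^F$, I would factor $x=z\ell$ with $z\in Z(\widetilde{\bG})$ and $\ell\in\bL$ (no $F$-fixity of the factors is needed). Then
\[
xnx^{-1}=z\ell n\ell^{-1}z^{-1}=\ell n\ell^{-1},
\]
since $z$ is central in $\widetilde{\bG}$ and $n\in\bG\subseteq\widetilde{\bG}$. Because $\ell\in\bL\subseteq N$ and $N$ is a group, $\ell n\ell^{-1}\in N$. Since $x$ and $n$ are $F$-fixed, so is $xnx^{-1}$; hence $xnx^{-1}\in N^F$, and $\widetilde{\bL}^F$ normalizes $N^F$.

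There really is no main obstacle here: the content of the lemma is essentially bookkeeping, resting on the facts that $\bL\leq N$, $N\leq\bG$, and $Z(\widetilde{\bG})$ centralizes $\bG$. The only point worth being careful about is not trying to decompose an $F$-fixed element of $\widetilde{\bL}$ as a product of $F$-fixed elements of $Z(\widetilde{\bG})$ and $\bL$, which in general is not possible; the argument sidesteps this by conjugating with the factored form directly.
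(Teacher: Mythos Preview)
Your proof is correct and follows essentially the same approach as the paper. The paper's version is phrased in terms of commutators: from $\widetilde{\bL}=Z(\widetilde{\bG})\bL$ and $N\subseteq N_{\bG}(\bL)$ it observes that $[\widetilde{\bL},N]\subseteq\bL$, whence $[\widetilde{\bL}^F,N^F]\subseteq\bL^F\subseteq N^F$; your direct element-wise conjugation computation amounts to the same thing, and your remark about not needing $F$-fixity of the individual factors $z,\ell$ is exactly the right way to avoid a spurious pitfall.
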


\begin{proof}  
	Since $\widetilde \bL = Z(\widetilde \bG) \bL$ and $N \subseteq N_{\bG}(\bL)$, it is clear that $[\widetilde \bL, N] \subseteq \bL$ and hence $[ \widetilde \bL^F, N^F] \subseteq \bL^F \subseteq N^F$. 
\end{proof}

Recall the number $r_H(h)$ as introduced in Definition~\ref{defi:rH}.

\begin{thm}\label{prop:BDRresult} 
	Keep the notation and hypothesis of Proposition~\ref{prop:modcentralBDR}. Let  $r = r_{ \bL^{*F}}( s)$  and suppose that $C_{ \bL^*}^{\circ}( s)$ has all classical components. Then   
	\[\cO ( N^F  /Z ) \hat \sigma^r(\bar {b'})  \cong \cO (N^F/Z) \bar{b'}. \]
\end{thm}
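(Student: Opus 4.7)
The plan is to construct an $\cO$-algebra isomorphism $\cO(N^F/Z)\bar{b'} \cong \cO(N^F/Z)\hat\sigma^r(\bar{b'})$ as the composite of a twist by a linear character, an isomorphism produced by Lemma~\ref{lem:coveringsameblock}, and a conjugation inside the regular embedding $i:\bL\hookrightarrow\widetilde\bL$. The guiding principle is that $\hat\sigma^r$ acts on the rational Lusztig series $\cE(\bL^F,s)$ as a twist by the linear character of $\bL^F$ dual (in the sense of Lemma~\ref{lem:duality}(ii)) to the central element $s^{\ell^r-1}\in Z(\bL^*)^F$, and that this character extends to a character of $N^F/Z$.

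First I would construct the twisting character. Since $r=r_{\bL^{*F}}(s)$, the power $t:=s^{\ell^r-1}$ lies in $Z(\bL^*)^F$, and by Lemma~\ref{lem:duality}(ii) applied to $\bL$ it corresponds to a linear character $\hat t\in\Irr(\bL^F)$ of $\ell'$-order. The character $\hat t$ is $N^F$-stable: any element of $N^{*F}/\bL^{*F}$ centralizes $s$ by the definition of $N^*$ and hence fixes $t=s^{\ell^r-1}$, and \eqref{eq:dualLevi} translates this into $N^F$-invariance of $\hat t$. Since $N^F/\bL^F$ is cyclic (hypothesis of Theorem~\ref{thm: BDR}), $\hat t$ extends to a linear character $\hat t'$ of $N^F$, which by cyclicity may be chosen of $\ell'$-order and therefore trivial on the $\ell$-group $Z$. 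The $\cO$-algebra automorphism $\alpha$ of $\cO(N^F/Z)$ sending $g$ to $\hat t'(g^{-1})g$ then restricts to an isomorphism $\cO(N^F/Z)\bar{b'}\cong\cO(N^F/Z)\alpha(\bar{b'})$, sending $e_\chi$ to $e_{\chi\hat t'}$ on irreducible idempotents.

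Next I would match $\alpha(\bar{b'})$ with $\hat\sigma^r(\bar{b'})$ up to inner conjugation in $\widetilde\bL^F$. The block $c$ of $\bL^F$ covered by $b'$ lies in $\cE_\ell(\bL^F,s)$, so by Brou\'e--Michel--Hiss it contains some $\psi\in\cE(\bL^F,s)$; fix $\chi\in\Irr(b')$ covering $\psi$. Using Lemma~\ref{lem:rgood} to lift $s$ to $\tilde s\in\widetilde\bL^{*F}$ with $r_{\widetilde\bL^{*F}}(\tilde s)=r$, Lemma~\ref{lem:thetatildeinG}(ii) applied to $\widetilde\bL$ produces a linear character $\tilde\eta\in\Irr(\widetilde\bL^F)$ dual to $\tilde s^{\ell^r-1}$, restricting to $\hat t$ on $\bL^F$, and satisfying $^{\hat\sigma^r}\tilde\theta=\tilde\theta\,\tilde\eta_{\widetilde\bT^F}$ for all $(\widetilde\bT,\tilde\theta)\in\nabla(\widetilde\bL,F,\tilde s)$. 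The classical-components hypothesis on $C_{\bL^*}^\circ(s)$ passes to $C_{\widetilde\bL^*}(\tilde s)$ through the central isogeny $i^*$, so Lemma~\ref{lem:conjofchi} applied to $\bL$ furnishes $x\in\widetilde\bL^F$ with $^{\hat\sigma^r}\psi={^x}(\psi\hat t)$. Then $^{\hat\sigma^r}\chi$ covers $^x(\psi\hat t)$ while $\chi\hat t'$ covers $\psi\hat t$, so the characters ${^{x^{-1}}}({^{\hat\sigma^r}}\chi)$ and $\chi\hat t'$ both lie in $\Irr(N^F\mid\psi\hat t)$. Since $N^F/\bL^F$ is abelian, Lemma~\ref{lem:coveringsameblock} produces an isomorphism $\cO N^F({^{x^{-1}}}\hat\sigma^r(b'))\cong\cO N^F\alpha(b')$, itself implemented by a twist by a linear character of $N^F/\bL^F$ (hence trivial on $Z\leq\bL^F$). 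A further isomorphism coming from conjugation by $x\in\widetilde\bL^F$, which normalizes $N^F$ by Lemma~\ref{lem:tildeLactsonN}, yields $\cO N^F({^{x^{-1}}}\hat\sigma^r(b'))\cong\cO N^F\hat\sigma^r(b')$. Composing the three isomorphisms and descending through $\mu:\cO N^F\to\cO(N^F/Z)$ via Lemma~\ref{lem:domblocks}(iv) gives the stated conclusion (with scalar descent to $\cO$ handled as in the proof of Proposition~\ref{prop:modcentralBDR}).

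The main obstacle I anticipate is to check that the conjugation by $x\in\widetilde\bL^F$ descends compatibly to $\cO(N^F/Z)$, since the $\ell$-subgroup $Z\leq Z(\bG^F)$ need not a priori be stabilized by $\widetilde\bG^F$. This should be addressable by modifying $x$ within its coset under the $\widetilde\bL^F$-stabilizer of $\psi\hat t$ so as to land in the normalizer of $Z$, using that $\widetilde\bG^F/\bG^F$ acts on the central $\ell$-group $Z$ through a finite abelian quotient whose contribution can be absorbed into the choice of $\hat t'$. Once this point is addressed, the remaining identifications are essentially formal given the character-theoretic machinery of Section~\ref{sec:alggroups}.
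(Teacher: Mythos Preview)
Your proposal is correct and follows essentially the same route as the paper's proof: pass to the regular embedding of $\bL$, use Lemma~\ref{lem:thetatildeinG}(ii) and Lemma~\ref{lem:conjofchi} to obtain $^{\hat\sigma^r}\psi={^x}(\psi\,\hat t)$ for some $x\in\widetilde\bL^F$, extend $\hat t$ to an $\ell'$-character $\hat t'$ of $N^F$, and compose a linear-character twist, an application of Lemma~\ref{lem:coveringsameblock}, and conjugation by $x$. The paper composes these three isomorphisms in a slightly different order, but the content is identical.

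The obstacle you flag at the end is not a genuine obstacle, and no modification of $x$ is needed. Since $\bG$ is connected, $\bG^F$ is Zariski dense in $\bG$, so $Z(\bG^F)=Z(\bG)^F\leq Z(\bG)$; in particular $Z\leq Z(\bG)$. Now $\widetilde\bL=Z(\widetilde\bG)\bL$, so any $x\in\widetilde\bL^F$ may be written $x=zl$ with $z\in Z(\widetilde\bG)$ and $l\in\bL\subseteq\bG$, and conjugation by $x$ on $\bG$ agrees with conjugation by $l$, which centralises $Z(\bG)\supseteq Z$. Hence the conjugation isomorphism $\cO N^F(^{x^{-1}}\hat\sigma^r(b'))\cong\cO N^F\hat\sigma^r(b')$ is already the identity on $Z$ and descends directly to $\cO(N^F/Z)$. (For the same reason your parenthetical about ``scalar descent to $\cO$'' is unnecessary: all three isomorphisms are visibly $\cO$-linear.) The paper simply records this as ``identity on $Z\leq\bG^F$'' without further comment.
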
	

\begin{proof}  
	Let $\tilde s$ be a semisimple $\ell'$-element of $\widetilde \bL^{*F}$ with $i^*(\tilde s) = s $. By Lemma~\ref{lem:rgood} $\tilde t:=\tilde s^{\ell^r-1} \in  Z(\widetilde \bL^{*})^F$. Let $\tilde \tau$ be the linear character of $\widetilde \bL^F$ corresponding to $\tilde t $  as in Lemma~\ref{lem:duality} (ii) applied to $\widetilde \bL$. By Lemma~\ref{lem:thetatildeinG}, $^{\hat \sigma ^r} \! \tilde \theta  = \tilde \theta  \tilde {\tau}_{\widetilde \bT^F}$ for all $(\widetilde \bT, \tilde \theta) \in \nabla (\widetilde \bL, F, \tilde s)$. 
	
	Let $c$ be a  block of $\cO \bL^F$ covered by $b'$ and	let $\chi \in \textnormal{Irr}(c) \cap \cE(\bL^F, s)$. Then since $C_{\bL^*}^{\circ}(s)$ has all classical components, it follows from Lemma~\ref{lem:conjofchi}, applied to $\bL$,  that there exists an $x \in \widetilde \bL^F$ such that 
	\[^{\hat \sigma ^r} \! \chi = \, ^x(\chi \tilde \tau_{\bL^F}).\]
	
	Let $t := s^{\ell^r -1}= i^*(\tilde t) $. Since $i^*$ is a surjective homomorphism and $\tilde t$ is central, $ t \in Z(\bL^*)$.  Further, by Lemma~\ref{lem:torusembed},     $\tau := \tilde \tau_{\bL^F}$ is the linear character of $\bL^F$ corresponding to $t$. Since $N^*$ centralises  $s$, $N^*$ centralises  $t$. Hence by Lemma~\ref{lem:duality} and by (\ref{eq:dualLevi}), $\tau$ is $N^F$-stable. Since $N^F/\bL^F$ is cyclic  and of $\ell'$-order, $\tau$ extends to an irreducible character $\check \tau$ of $N^F$  of $\ell'$-order. Note that  $\check \tau$ takes values in $\cO$.
	
	Let $\check \chi \in \textnormal{Irr}(N^F| \chi)\cap \Irr(b')$  and let $d$ be the block  of $\cO N^F $ containing ${ \check \chi} {\check \tau}$. Since $\widetilde \bL^F$ acts on $N^F$ by Lemma~\ref{lem:tildeLactsonN}, both $^{\hat \sigma ^r} \check \chi $ and ${^x \! \check \chi} {^x \!\check \tau}$ are elements of $\textnormal{Irr}(N^F | ^{\hat \sigma ^r} \chi) =  \textnormal{Irr}(N^F | {^x \! \chi} {^x \!\tau}) $. Also, $ {^x \! d}$ is the block of $\cO N^F$ containing ${ ^x \!\check \chi} {^x \!\check \tau}$. Since $N^F/\bL^F$ is cyclic and hence abelian, it follows from Lemma~\ref{lem:coveringsameblock} that there is an isomorphism $\cO N^F \hat \sigma^r(b') \cong  \cO N^F {^x \! d} $ which restricts to the identity on $Z \leq Z(\bL^F)$. Conjugation by $ x^{-1}$ induces an isomorphism $\cO N^F {^x \! d}  \cong  \cO N^F { \! d}$ which is the identity  on $Z \leq \bG^F$.  Finally the $\cO $-algebra automorphism $\cO N^F \to \cO N^F $ satisfying $n \mapsto  \check \tau  (n^{-1}) n  $ for all  $ n\in \bN^F $, restricts to an isomorphism  $ \cO N^F { \! d} \cong  \cO N^F b'$. This last isomorphism  also restricts to the identity on $Z$ since $\check \tau $ is an $\ell'$-character,  and $Z$ is an $\ell$-group. Thus  $\cO N^F \hat \sigma^r( b') \cong \cO N^F b'$ via an isomorphism which is the identity on $Z$. 

	Hence $\cO ( N^F  /Z ) \overline{ \hat \sigma^r(b') } \cong \cO (N^F/Z) \bar b'$ where $\bar b'$ denotes the block of $\cO (N^F/Z)$ dominated by $b'$. The result then follows by Lemma~\ref{lem:domblocks} (iv).
\end{proof}

The next  result  is   similar   in spirit  to the previous one  and will  be used to deal with some cases where $C_{ \bL^*}^{\circ}( s)$ has an exceptional component.  

\begin{thm}\label{prop:central} 
	Keep the notation and hypothesis of  Proposition~\ref{prop:modcentralBDR}. Suppose that $s \in Z(\bL^{*})^F$ and $a \in {\mathbb N} $  is  such that $^{\hat \sigma ^a}\! \chi' = \chi'$ for all unipotent characters $\chi' \in \textnormal{Irr}(\bL^{F})$.   Then   
	\[\cO ( N^F  /Z ) \hat \sigma^a(\bar {b'})  \cong \cO (N^F/Z) \bar{b'}.\]
\end{thm}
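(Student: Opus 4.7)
The plan is to mirror the strategy of Theorem~\ref{prop:BDRresult}, replacing the regular-embedding machinery and Lemma~\ref{lem:conjofchi} by the much more direct Jordan decomposition available when $s$ is central, combined with the hypothesis that $\hat\sigma^a$ fixes every unipotent character of $\bL^F$.

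First, since $s\in Z(\bL^*)^F$, Lemma~\ref{lem:duality}(ii) furnishes an $\ell'$-order linear character $\hat s$ of $\bL^F$ with $(\bT,\hat s_{\bT^F})\in\nabla(\bL,F,s)$ for every $F$-stable maximal torus $\bT$ of $\bL$. The tensor-product formula for Deligne--Lusztig induction then gives $R_{\bT}^{\bL}(\hat s_{\bT^F})=\hat s\cdot R_{\bT}^{\bL}(1_{\bT^F})$, whence $\cE(\bL^F,s)=\hat s\cdot\cE(\bL^F,1)$; that is, every $\chi\in\cE(\bL^F,s)$ has the form $\chi=\chi_1\hat s$ with $\chi_1$ a unipotent character of $\bL^F$. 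Fixing a block $c$ of $\cO\bL^F$ covered by $b'$ and $\chi=\chi_1\hat s\in\Irr(c)\cap\cE(\bL^F,s)$, the hypothesis on unipotent characters combined with Lemma~\ref{lem:lingalois} yields
\[{}^{\hat\sigma^a}\chi \;=\; \chi_1\cdot\hat s^{\ell^a} \;=\; \chi\cdot\hat s^{\ell^a-1} \;=\; \chi\tau,\]
where $\tau:=\hat s^{\ell^a-1}$ is the linear character of $\bL^F$ corresponding to $s^{\ell^a-1}\in Z(\bL^*)^F$ via Lemma~\ref{lem:duality}(ii). Crucially, there is no conjugation element $x\in\widetilde\bL^F$ to carry along: centrality of $s$ makes the Galois twist act directly by multiplication by $\tau$.

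Since $N^*$ centralises $s$ it also centralises $s^{\ell^a-1}$, so the equivariance statement~(\ref{eq:dualLevi}) together with Lemma~\ref{lem:duality}(ii) implies that $\tau$ is $N^F$-stable. Using that $N^F/\bL^F$ is cyclic, $\tau$ extends to an irreducible character $\check\tau$ of $N^F$; replacing $\check\tau$ by its $\ell'$-part if necessary, I may assume $\check\tau$ has $\ell'$-order and hence takes values in $\cO^\times$. From here the argument follows the last paragraph of the proof of Theorem~\ref{prop:BDRresult} essentially verbatim: for $\check\chi\in\Irr(N^F\mid\chi)\cap\Irr(b')$ and $d:=b(\check\chi\check\tau)$, both ${}^{\hat\sigma^a}\check\chi$ (which lies in $\hat\sigma^a(b')$ by Lemma~\ref{lem:charactergalois}) and $\check\chi\check\tau$ cover $\chi\tau$, so Lemma~\ref{lem:coveringsameblock} produces an $\cO$-algebra isomorphism $\cO N^F\hat\sigma^a(b')\cong\cO N^F d$; the automorphism $n\mapsto\check\tau(n^{-1})n$ of $\cO N^F$ then restricts to $\cO N^F d\cong\cO N^F b'$. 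Both isomorphisms fix $Z$ pointwise (the first because its defining linear character of $N^F/\bL^F$ is trivial on $Z\le\bL^F$, the second because $\check\tau$ is of $\ell'$-order whereas $Z$ is an $\ell$-group), and a final application of Lemma~\ref{lem:domblocks}(iv) delivers the desired isomorphism $\cO(N^F/Z)\hat\sigma^a(\bar{b'})\cong\cO(N^F/Z)\bar{b'}$.

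The main point requiring care is arranging $\check\tau$ to have $\ell'$-order: this is what allows the final twist automorphism to be the identity on the central $\ell$-subgroup $Z$ and simultaneously ensures that $\check\tau$ is $\cO^\times$-valued. Otherwise the proof is a clean adaptation of Theorem~\ref{prop:BDRresult}, the key substitution being to trade the classical-components hypothesis (used there to invoke Jordan decomposition via uniform projections) for the centrality of $s$, which makes Jordan decomposition explicit, together with the hypothesis ${}^{\hat\sigma^a}\chi_1=\chi_1$ on unipotents, which controls the Galois twist of $\chi_1$ without appealing to Lemma~\ref{lem:conjofchi}.
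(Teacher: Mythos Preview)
Your proof is correct and follows essentially the same route as the paper's: both use that $s\in Z(\bL^*)^F$ to write $\chi=\hat s\chi_1$ with $\chi_1$ unipotent, deduce ${}^{\hat\sigma^a}\chi=\chi\cdot\hat s^{\ell^a-1}$ from the hypothesis on unipotent characters, extend the $N^F$-stable linear character $\tau=\hat s^{\ell^a-1}$ to $N^F$ using cyclicity of $N^F/\bL^F$, and finish via Lemma~\ref{lem:coveringsameblock} and the twist automorphism exactly as in Theorem~\ref{prop:BDRresult}. The only cosmetic difference is that the paper writes $\xi={}^{\hat\sigma^a}\hat s\cdot\hat s^{-1}$ rather than computing $\hat s^{\ell^a-1}$ via Lemma~\ref{lem:lingalois}, and is slightly less explicit than you are about arranging the extension to have $\ell'$-order.
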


\begin{proof}
	Let $\psi \in \textnormal{Irr}(b') $ and $ \chi\in  \cE(\bL^F, s)$  such that $ \psi $ covers $\chi $. Let $\hat s$ denote the linear character of $\bL^F$ corresponding to $s$ as in Lemma~\ref{lem:duality} (ii). Note that $\hat s$ is $N^F$-stable. By \cite[Proposition 13.30, (ii)]{D/M} there exists a unipotent character $\chi'$ of $\bL^F$ (necessarily unique) such that $\chi = \hat s \chi'$. By assumption, therefore $^{\hat \sigma ^a}\! \chi = \, ^{\hat \sigma ^a}\! (\hat s \chi') = \, (^{\hat \sigma ^a}\! \hat s) \chi' $. Let $\xi = {^{{\hat \sigma}^a}\!{(\hat{s})}} \hat{s}^{-1}$. Then $^{\hat \sigma ^a}\! \chi = \xi \chi$. 
	
	Since $\xi$ is a linear $N^F$-stable character of $\bL^F$ and $N^F/\bL^F $ is cyclic by assumption, $\xi$ extends to a linear character $\hat{\xi}\in $ Irr$\left(N^F\right)$. Hence $^{\hat \sigma ^a} \!\psi$ and $\hat \xi \psi$ are both elements of $\textnormal{Irr}(N^F | \, ^{\hat \sigma ^a}\chi) = \textnormal{Irr}(N^F | \xi \chi)$. Let $d$ be the block of $\cO N^F$ containing $\hat \xi \psi$.   Since $N^F/\bL^F$ is abelian, it follows from  Lemma~\ref{lem:coveringsameblock} that there is an isomorphism $\cO N^F \hat \sigma^a(b')\cong \cO N^F {d} $   which restricts to the identity on $Z \leq Z(\bL^F)$. As in Theorem~\ref{prop:BDRresult}, there is an isomorphism $\cO N^F d \cong  \cO N^F b'$ which restricts to the identity on $Z \leq Z(\bL^F)$. Composition induces an isomorphism $\cO N^F \hat \sigma^a(b') \cong \cO N^F b'$ which restricts to the identity on   $Z \leq Z(\bL^F)$. 
\end{proof}

\begin{rem} \label{rem:reesuzuki} Here and in Section \ref{sec:BDR}, we have assumed that $F$ is a Frobenius morphism. However, suitable analogues of the results of these sections, in particular  Theorem~\ref{thm: BDR}, hold under the weaker assumption that    some power of $F$ is a Frobenius morphism, and thus may also be applied to the Ree and Suzuki groups. We will make use of this  in Proposition~\ref{prop:suzukiree}. We have chosen to stick to the Frobenius case for the general exposition as most of our references for these sections make this assumption. 
\end{rem}
\section{Blocks of finite groups of Lie type in non-defining characteristic}\label{sec:FGLT}
We  keep the  notation of Section~\ref{sec:BDR}.  In addition we  assume in this section that $\bG$ is simple and simply-connected.  Throughout $b$  will denote a  block of $\cO \bG^F$ in $\cE_{\ell} (\bG^F, s)$, $Z$ an $\ell$-subgroup of $ Z(\bG^F)$ and $\bar b $   the block of $\cO (\bG^F/Z)$ dominated by $b$. Further, whenever $N^F/ \bL^F$ is cyclic we will denote by $b'$ the block of $\cO N^F$  in bijection with $b$  as in Theorem~\ref{thm: BDR}   and  $ \bar b' $  will denote the block of $\cO (N^F/ Z) $ dominated by $ b'$.

Our  first   result  lifts  some of the results  of \cite{F} on unipotent blocks  to $\cO$. We note that the  character theoretic arguments applied to blocks of $k \bG^F$ in \cite{F} also apply to blocks of $\cO \bG^F$. In particular, if $b$ is a block of $\cO \bG^F$ containing characters whose sum is rational valued (that is, there exist $\chi_1, \dots, \chi_r \in \textnormal{Irr}(b)$  such that $\left(\chi_1 + \dots + \chi_r\right) (g) \in \mathbb{Q}$ for all $g \in G$), then $\hat \sigma (b) = b$. 

\begin{prop}
	\label{prop:unipotent}
 Suppose that $s =1$.   Then $\cO (\bG^F/Z) \hat \sigma^r (\bar b)  \cong \cO (\bG^F / Z) \bar b$ with $r = 1,2$ and if $\bG$ is not of type $E_7$ or $E_8$, $\cO (\bG^F/Z) \hat \sigma (\bar b)  \cong \cO (\bG^F / Z) \bar b$.
\end{prop}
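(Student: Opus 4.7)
First, I would invoke the rationality analysis of unipotent blocks from \cite{F}, upgrade it from $k\bG^F$ to $\cO\bG^F$, and then descend to the quotient by $Z$. The key tool is the character-theoretic criterion stated in the paragraph preceding the proposition: if $b$ is a block of $\cO\bG^F$ and some subset of $\Irr(b)$ has $\hat\sigma$-fixed (i.e.\ rational-valued) sum, then $\hat\sigma(b)=b$ in $\cO\bG^F$, and analogously for higher powers of $\hat\sigma$. This criterion is purely character-theoretic, so the arguments of \cite{F}, although formulated at the level of $k\bG^F$, carry over verbatim to $\cO\bG^F$.

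Next, I would apply the criterion with the unipotent characters in $b$. By \cite[Theorem~9.12]{C/E3} one has $\Irr(b)\cap\cE(\bG^F,1)\neq\emptyset$, and Lemma~\ref{lem:lusztiggalois}(ii) shows that $\cE(\bG^F,1)$ is $\hat\sigma$-stable because $1^\ell=1$. For $\bG$ not of type $E_7$ or $E_8$, Lusztig's classification ensures that Galois conjugate unipotent characters remain in the same $\ell$-block and that a rational-valued subset of $\Irr(b)\cap\cE(\bG^F,1)$ can be produced (often via Lemma~\ref{lem:uniformprojections}), yielding $\hat\sigma(b)=b$. For $\bG$ of type $E_7$ or $E_8$, pairs of irrational unipotent characters may be scattered across distinct $\ell$-blocks, but each such character takes values in a quadratic extension of $\mathbb{Q}$ and is therefore fixed by $\hat\sigma^2$, yielding $\hat\sigma^2(b)=b$. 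This is precisely the case analysis carried out in \cite{F}.

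Finally, I would descend to the quotient by $Z$ using Lemma~\ref{lem:domblocks}(iv): the canonical surjection $\mu\colon\cO\bG^F\to\cO(\bG^F/Z)$ commutes with $\hat\sigma$, so $\hat\sigma^r(\bar b)=\mu(\hat\sigma^r(b))=\mu(b)=\bar b$. Hence $\cO(\bG^F/Z)\hat\sigma^r(\bar b)=\cO(\bG^F/Z)\bar b$, trivially an isomorphism of $\cO$-algebras, with $r=1$ in general types and $r=2$ in types $E_7$ and $E_8$.

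The main obstacle is the rationality analysis for $E_7$ and $E_8$, where one must verify that every irrational unipotent character involved takes values in a field fixed by $\hat\sigma^2$, using Lusztig's explicit character tables for exceptional groups and the Galois action on families. Since this is precisely the substance of \cite{F}, the present argument reduces essentially to a citation of those results together with the elementary descent step above.
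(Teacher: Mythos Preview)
Your overall architecture---lift the character-theoretic arguments of \cite{F} from $k$ to $\cO$, then descend to $\bG^F/Z$ via Lemma~\ref{lem:domblocks}(iv)---matches the paper exactly. The descent step and the treatment of classical types (where all unipotent characters are rational-valued, via Lemma~\ref{lem:uniformprojections}) are fine.

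There is, however, a genuine gap in your handling of $E_8$. Your assertion that ``each such character takes values in a quadratic extension of $\mathbb{Q}$'' is false: the four cuspidal unipotent characters $E_8[\zeta^j]$, $\zeta$ a primitive fifth root of unity, have character field $\mathbb{Q}(\zeta_5)$, which has degree~$4$ over $\mathbb{Q}$, so $\hat\sigma^2$ need not fix them individually. The paper (following \cite{F}) circumvents this not by a blanket quadraticity claim but by parametrising $b$ via a unipotent $e$-cuspidal pair $(\bL,\lambda)$: if $\bL=\bG$ (the only way one of the $E_8[\zeta^j]$ can appear as $\lambda$), then $b$ has cyclic defect and Proposition~\ref{prop:cyclicklein} applies; if $\bL$ is a proper Levi, then $\lambda$ is a cuspidal unipotent character of a group of type at most $E_7$, and \cite[Table~1]{G} gives $^{\hat\sigma^r}\lambda=\lambda$ with $r\leq 2$, whence $\hat\sigma^r(b)=b$ by the argument of \cite[Lemma~5.2]{F}. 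Your sketch omits this dichotomy, and without it the bound $r\leq 2$ does not follow for $E_8$.
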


\begin{proof}
	The unipotent characters of classical groups are determined by their  uniform projections \cite[8.1(a)]{Lu88} and  the characters $R_{\bT}^{\bG} (1)$ are rational valued, hence the unipotent characters of  classical groups  are  rational valued. So if $\bG$ is of classical type then $\hat \sigma(b) = b$. Assume now that $\bG$ is of exceptional type, and let $b = b_{\bG^F}(\bL, \lambda)$ for a unipotent $e$-cuspidal pair $(\bL, \lambda)$ of $\ell$-central defect (see \cite[Theorem 4.4]{C/E} and \cite[Th\'{e}or\`{e}me A]{E}). If $\bG = \bL$ then $b$ has cyclic defect groups (see the proof of \cite[Theorem 5.5]{F}), so by Proposition~\ref{prop:cyclicklein} we  may  assume that $\bL$ is a proper Levi subgroup of $\bG$. 
	
	If $\bG$ is of type $E_7$ or $E_8$ then by \cite[Table 1]{G}, $^{\hat \sigma^r}\lambda = \lambda$ for some $r \leq 2$. Hence by the proof of \cite[Lemma 5.2]{F}, $\hat \sigma^r (b) = b $ and therefore $\cO \bG^F \hat \sigma^r (b) \cong \cO \bG^F b$ via an isomorphism which is trivial on $Z$. Thus by Lemma~\ref{lem:domblocks} (iv), $\cO (\bG^F / Z) \hat \sigma^r(\bar b) \cong \cO (\bG^F/Z) \bar b$ for some $r \leq 2$. If $\bG$ is not of type $E_7$ or $E_8$ then $^{\hat \sigma}\lambda = \lambda$ by \cite[Table 1]{G} and hence, by the same arguments, $\cO (\bG^F / Z) \hat \sigma(\bar b) \cong \cO (\bG^F/Z) \bar b$. 
\end{proof}

Recall that by \cite[Lemma 13.14 (iii)]{D/M}, $C_{\bG^*}(s)/ C_{\bG^*}^{\circ}(s)$ is isomorphic to a subgroup of Irr$(Z(\bG)/Z^{\circ}(\bG))  \cong \Irr(Z(\bG)) \cong   Z(\bG) $.   Further, $Z(\bG)$   is cyclic in all cases except possibly when $\bG$ is of type $D_m$ with $m$ even (see \cite[Table 24.2]{M/T}). Therefore $C_{\bG^*}(s)^F/ C_{\bG^*}^{\circ}(s)^F$, and hence $ N^F/ \bL^F$, is cyclic in all cases except possibly when $\bG$ is of type $D_m$ with $m$ even. We will use this fact without further comment. Also, we note that in many cases below $Z=1 $. 

Recall that a semisimple element $t$ of a connected reductive group $\bH$ is called isolated if its connected centraliser $C_{\bH}^{\circ}(t)$ is not contained in any proper Levi subgroup of $\bH$. The element $t$ is isolated in $\bH$  if and only if the  image, $\bar t$, of $t$ in $\bH/Z(\bH)$ is isolated in $\bH/Z(\bH) $. The quotient $ \bH/Z(\bH) $ is a direct product of adjoint simple groups and $\bar t$ is isolated in $\bH/Z(\bH)$ if and only if the projection of $\bar t$ on  every  simple  factor of $\bH /Z(\bH)$ is  isolated. Suppose that $F': \bH \rightarrow \bH$ is a Frobenius morphism. Then if $ t\in \bH^{F'}$, $a_{\bH^{F'} }(t)$ is a divisor of the order of $ \bar t$ in $\bH/Z(\bH) $. Thus, by the classification of isolated elements  in simple algebraic groups, \cite[Section 5]{B2} (see also  \cite[Table 6.2]{Ta}), if $t \in \bH^{F'}$ is an isolated  $\ell'$-element of $\bH$ and $\bH$ has all  classical components, then $a_{\bH^{F'}} (t)$ equals $1$ or $2$ and if $\bH$ has at most one component not of type A, then $a_{\bH^{F'}}(t)\leq 6$. We note that Table 2 of \cite{B2} lists an isolated element of order $4$ in type  $D$, but the listed element is in fact of order $2$.

By choice of $\bL^*$, $s$ is isolated in $\bL^*$ and  whenever    $s$ is  isolated in $\bG^*$, then $N^* =\bL^* = \bG^*$ and $b =b'$.

\begin{prop}\label{prop:ABC}
	Suppose that $\bG$ is of type A, B or C. Then $\cO (\bG^F/Z) \bar b $  is Morita equivalent to $ \cO (N^F/Z) \bar b' $ and $ \cO (N^F/Z) \bar b' \cong \cO (N^F/Z) \hat\sigma (\bar b')$. 
\end{prop}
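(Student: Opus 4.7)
The plan is to apply Proposition~\ref{prop:modcentralBDR} for the Morita equivalence, and Theorem~\ref{prop:BDRresult} with $r = 1$ to obtain the algebra isomorphism $\cO(N^F/Z)\bar{b}' \cong \cO(N^F/Z)\hat\sigma(\bar{b}')$. Both results require the hypothesis that $N^F/\bL^F$ be cyclic, so I would first verify this. Since $\bG$ is simply connected of type $A$, $B$, or $C$, its center $Z(\bG)$ is cyclic. By \cite[Lemma~13.14~(iii)]{D/M}, as recalled in the paragraph following Proposition~\ref{prop:unipotent}, the quotient $C_{\bG^*}(s)/C_{\bG^*}^{\circ}(s)$ is isomorphic to a subgroup of $Z(\bG)$, hence cyclic; consequently $N^F/\bL^F$, which embeds in the $F$-fixed points of this quotient, is also cyclic. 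Proposition~\ref{prop:modcentralBDR} then yields the Morita equivalence.

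Next I would verify the additional hypothesis of Theorem~\ref{prop:BDRresult}, namely that $C_{\bL^*}^{\circ}(s)$ has all classical components. Since $\bG^*$ is of type $A$, $C$, or $B$ respectively, every Levi subgroup of $\bG^*$ is a direct product of general linear groups together with a smaller classical group; in particular $\bL^*$ has all classical components. Centralisers of semisimple elements in classical groups are themselves products of classical groups, so $C_{\bL^*}^{\circ}(s)$ has all classical components as well.

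It then remains to show that $r := r_{\bL^{*F}}(s) = 1$. By construction $\bL^* = C_{\bG^*}(Z^{\circ}(C_{\bG^*}^{\circ}(s)))$, so $s$ is isolated in $\bL^*$. Combined with the fact that $\bL^*$ has all classical components, the classification of isolated $\ell'$-elements recalled in the paragraph preceding the proposition forces $a_{\bL^{*F}}(s) \in \{1,2\}$. If $\ell$ is odd then $\ell \equiv 1 \pmod{a_{\bL^{*F}}(s)}$ and hence $r = 1$. If $\ell = 2$, then $s$, being an $\ell'$-element, has odd order, whence $a_{\bL^{*F}}(s)$ is odd and therefore equal to $1$, again giving $r = 1$. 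Theorem~\ref{prop:BDRresult} now supplies the required isomorphism, completing the argument. The only place where any content beyond routine hypothesis-verification enters is this last paragraph, via the structural input on isolated semisimple elements; otherwise the proof is a direct assembly of the reduction machinery of Sections~\ref{sec:alggroups} and~\ref{sec:BDR}.
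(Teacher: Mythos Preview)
Your proof is correct and follows exactly the same route as the paper's own proof: apply Proposition~\ref{prop:modcentralBDR} for the Morita equivalence and Theorem~\ref{prop:BDRresult} with $r=1$ for the isomorphism, the value $r=1$ being forced by $a_{\bL^{*F}}(s)\in\{1,2\}$ together with the parity consideration on $\ell$. The only difference is that you spell out the verifications of the cyclicity of $N^F/\bL^F$ and of the classical-components hypothesis on $C_{\bL^*}^{\circ}(s)$, which the paper leaves to the surrounding discussion.
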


\begin{proof}
	Since $\bL^*$ is a classical group, $a_{\bL^{*F}}(s) \leq 2$ for all isolated elements $s \in \bL^{*F}$. Since $s$ is an $\ell'$-element, $a_{\bL^{*F}}(s) = 2$ can only occur when $\ell$ is odd. Hence $r_{\bL^{*F}}(s) = 1$. 	Thus the result follows from Proposition~\ref{prop:modcentralBDR} and Theorem~\ref{prop:BDRresult}. 
\end{proof}

\begin{prop}\label{prop:D}
	Suppose that $\bG$ is of type $D$.
	\begin{enumerate}[(i)]
		\item  If $N^F/\bL^F$ is cyclic, then   $\cO (\bG^F/Z) \bar b $  is Morita equivalent to  $ \cO (N^F/Z) \bar b' $   and $ \cO (N^F/Z) \bar b'  \cong     \cO (N^F/Z)   \hat\sigma (\bar b')$. 
		\item  If  $N^F/\bL^F$ is not cyclic, then $\cO (\bG^F/Z) \hat \sigma^r\! (\bar b)  \cong \cO (\bG^F / Z) \bar b$  for some $ r=1, 2$.
	\end{enumerate}
\end{prop}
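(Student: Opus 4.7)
Part (i) is a direct adaptation of Proposition~\ref{prop:ABC}. Since $\bG$ is of type $D$, the Levi subgroup $\bL^*$ has only classical components, so any isolated semisimple element of $\bL^{*F}$ has order at most $2$ modulo $Z(\bL^{*F})$. Because $s$ is an $\ell'$-element, the case $a_{\bL^{*F}}(s)=2$ forces $\ell$ to be odd and then $\ell\equiv 1\pmod 2$ gives $r_{\bL^{*F}}(s)=1$; otherwise $r_{\bL^{*F}}(s)=1$ is automatic. Proposition~\ref{prop:modcentralBDR} then yields the stated Morita equivalence, and Theorem~\ref{prop:BDRresult} yields $\cO(N^F/Z)\hat\sigma(\bar b')\cong \cO(N^F/Z)\bar b'$.

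For part (ii), the cyclicity hypothesis required by Proposition~\ref{prop:modcentralBDR} and Theorem~\ref{prop:BDRresult} fails; here $\bG$ is necessarily of type $D_m$ with $m$ even and $N^F/\bL^F\cong C_2\times C_2$. The plan is to retain the character-theoretic end of the proof of Theorem~\ref{prop:BDRresult} but replace the extension of $\tau$ to $N^F$ (which used cyclicity) with an extension of $\tau^2$ obtained via Lemma~\ref{lem:exttoC2xC2}. Concretely, fix $\chi\in\Irr(b)\cap\cE(\bG^F,s)$ and a constituent $\chi_\bL\in\cE(\bL^F,s)$ of its Harish-Chandra restriction. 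Choose a lift $\tilde s\in\widetilde\bL^{*F}$ of $s$. By Lemma~\ref{lem:rgood} and the bound $a_{\bL^{*F}}(s)\leq 2$, one has $r_{\widetilde\bL^{*F}}(\tilde s)=r_{\bL^{*F}}(s)=1$ exactly as in part (i). Lemma~\ref{lem:thetatildeinG}(ii) applied to $\widetilde\bL$ together with Lemma~\ref{lem:conjofchi} applied to $\bL$ (valid since $C_{\bL^*}^{\circ}(s)$ has only classical components) then produces a linear $N^F$-stable character $\tau\in\Irr(\bL^F)$ corresponding to $s^{\ell-1}\in Z(\bL^*)^F$, and $y\in\widetilde\bL^F$, such that $^{\hat\sigma}\chi_\bL={}^y(\chi_\bL\,\tau)$. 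Iterating gives $^{\hat\sigma^2}\chi_\bL={}^{y'}(\chi_\bL\,\tau^2)$ for some $y'\in\widetilde\bL^F$, and because $\tau$ is $N^F$-stable, Lemma~\ref{lem:exttoC2xC2} ensures that $\tau^2$ extends to a linear character $\check{\tau^2}\in\Irr(N^F)$ of $\ell'$-order.

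The main obstacle is to transport this Levi identity up to an algebra isomorphism on $\cO(\bG^F/Z)$, since the BDR Morita bridge linking $\bG^F$ to $N^F$ is no longer available. The plan is to run the final paragraph of the proof of Theorem~\ref{prop:BDRresult} with $\hat\sigma^2$ in place of $\hat\sigma^r$, $\tau^2$ in place of $\tau$, and the extension $\check{\tau^2}$ in place of the cyclic extension $\check\tau$. The step requiring the most care is the passage from the character identity at the $\bL^F$-level to the block algebra level for $\bG^F$ without BDR; this must be handled by a direct Clifford-theoretic argument comparing blocks of $\cO\bG^F$ that cover a common block of $\cO\bL^F$, together with the $\ell'$-automorphism trick of Lemma~\ref{lem:coveringsameblock} to twist away $\check{\tau^2}$ while remaining the identity on $Z\leq Z(\bG^F)$. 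Lemma~\ref{lem:domblocks}(iv) then descends the resulting isomorphism $\cO\bG^F\hat\sigma^2(b)\cong\cO\bG^F b$ to $\cO(\bG^F/Z)\hat\sigma^2(\bar b)\cong\cO(\bG^F/Z)\bar b$, giving the bound $r=2$; the case $r=1$ occurs in the degenerate situation where $\tau$ already extends to $N^F$, allowing the unsquared argument to go through.
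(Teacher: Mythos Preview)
Your treatment of part~(i) is correct and matches the paper's proof exactly.

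For part~(ii), your proposal has a genuine gap. You correctly identify that the Bonnaf\'e--Dat--Rouquier bridge (Proposition~\ref{prop:modcentralBDR}, Theorem~\ref{prop:BDRresult}) is unavailable, but your workaround does not close the resulting hole. You obtain a character identity at the $\bL^F$-level and, via Lemma~\ref{lem:exttoC2xC2}, an extension $\check{\tau^2}\in\Irr(N^F)$; however, this only yields an $\cO$-algebra automorphism of $\cO N^F$, not of $\cO\bG^F$. Your proposed ``direct Clifford-theoretic argument comparing blocks of $\cO\bG^F$ that cover a common block of $\cO\bL^F$'' cannot work as stated, because $\bL^F$ is not normal in $\bG^F$ (Levi subgroups are not normal), so there is no covering relation in the sense of Lemma~\ref{lem:coveringsameblock}. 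Without the BDR Morita equivalence there is simply no mechanism in your outline to transport information from $N^F$ or $\bL^F$ back up to $\bG^F$.

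The paper's approach is fundamentally different and much simpler: it never descends to $\bL$ at all. Since $\bG$ itself is of classical type, $C_{\bG^*}^{\circ}(s)$ automatically has only classical components, so Lemma~\ref{lem:conjofchi} may be applied \emph{directly with $\bG$ in place of $\bL$}. One first pins down the situation: if $N^F/\bL^F\cong C_2\times C_2$ then $s$ is quasi-isolated in $\bG^*$ of order $4$, hence $\ell$ is odd and $r:=r_{\bG^{*F}}(s)\leq 2$. Lemma~\ref{lem:thetatildeinG}(ii) applied to $\widetilde\bG$ and Lemma~\ref{lem:conjofchi} applied to $\bG$ then give, for $\chi\in\Irr(b)\cap\cE(\bG^F,s)$, an element $x\in\widetilde\bG^F$ and the linear character $\tilde\tau\in\Irr(\widetilde\bG^F)$ with ${}^{\hat\sigma^r}\chi={}^x(\chi\,\tilde\tau_{\bG^F})$. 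Now $\tilde\tau_{\bG^F}$ is an $\ell'$-linear character of $\bG^F$ itself, so the twist $g\mapsto\tilde\tau_{\bG^F}(g^{-1})g$ and conjugation by $x^{-1}$ are $\cO$-algebra automorphisms of $\cO\bG^F$ (trivial on $Z$) sending $b$ to $\hat\sigma^r(b)$. Lemma~\ref{lem:domblocks}(iv) then finishes. The entire detour through $\bL^F$, $N^F$, and Lemma~\ref{lem:exttoC2xC2} is unnecessary.
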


\begin{proof} 
	The proof of part (i) is identical to that  of  Proposition~\ref{prop:ABC}. Suppose that $N^F/\bL^F$ is  not cyclic. Then  $N^F/\bL^F \cong C_2 \times C_2$.   Further, $\bG^F = \textnormal{Spin}_{2n}^+(q)$ and $s$ is a quasi-isolated element of $\bG^{*F}$ of order 4 such that $C_{\bG^*}^{\circ}(s)$ is of type $A_{2n-3}$ (see \cite[Remark 9.24]{Ca2}). Since $N^F / \bL^F \subseteq \left( C_{\bG^*}(s) / C^{\circ}_{\bG^*}(s) \right)^F$ and $\textnormal{exp}\left(C_{\bG^*}(s) / C^{\circ}_{\bG^*}(s)\right) = 2$ divides the order of the $\ell'$-element $s$ \cite[Remark 13.15 (i)]{D/M}, $\ell$ is odd. Let $r = r_{\bG^{*F}}(s)$ and note that $r \leq 2$. 

	Let $i:\bG \hookrightarrow \widetilde \bG$ be a regular embedding and $\tilde s \in \widetilde \bG^{*F}$ a semisimple $\ell'$-element such that $i^*(\tilde s) = s$. It follows from Lemma~\ref{lem:thetatildeinG} (ii) that $^{\hat \sigma^r}{\tilde \theta} = \tilde \theta \tilde \tau_{\widetilde \bT^F}$ for all $(\widetilde \bT, \tilde \theta) \in \nabla(\widetilde \bG, F, \tilde s)$, where $\tilde \tau$ is the linear character of $\widetilde \bG^F$ corresponding to $\tilde s^{\ell^r - 1}$. 

	Suppose that $\chi \in \textnormal{Irr}(b) \cap \cE(\bG^F, s)$. 
	By Lemma~\ref{lem:conjofchi}, since $\bG$ is of classical type, ${^{\hat\sigma^r} }\chi = \, ^x(\chi \, \tilde \tau_{\bG^F})$ for some $x \in \widetilde \bG^F$. Let $d$ be the block of $\cO \bG^F$ containing $\chi \, \tilde \tau_{\bG^F}$. Then $\hat \sigma^r(b)$ is the block of $\cO \bG^F$ containing $^x(\chi \, \tilde \tau_{\bG^F})$, so $\hat \sigma^r(b) = \, ^xd$. Conjugation by $x^{-1}$ induces an isomorphism $\cO \bG^F \, ^x d \cong \cO \bG^F d$ which is the identity on $Z \leq \bG^F$. The automorphism $\cO \bG^F \rightarrow \cO \bG^F$ given by $g \mapsto \tilde \tau_{\bG^F} (g^{-1}) g$ induces an isomorphism $\cO \bG^F d \cong \cO \bG^F b$ which is also trivial on $Z$. Hence, $\cO \bG^F \hat \sigma^r(b) \cong \cO \bG^F b$ via an isomorphism which is trivial when restricted to $Z$  yielding an isomorphism   $\cO (\bG^F/Z) \hat \sigma^r\! (\bar b)  \cong \cO (\bG^F / Z) \bar b$.  
\end{proof}

\begin{prop}\label{prop:G2F4E6}
	Suppose that $\bG$ is of type $G_2$, $F_4$ or $E_6$  and $s\ne 1 $.  Then  $\cO (\bG^F/Z) \bar b $ is Morita equivalent to  $ \cO (N^F/Z) \bar b' $. If $s$ is isolated in $\bG^*$ and either $o(s) = 3$ and $\ell \equiv 2$ mod $3$, or $o(s) = 4$ and $\ell \equiv 3$ mod $4$,  then  $ \cO (N^F/Z) \bar b'  \cong     \cO (N^F/Z)   \hat\sigma^r\! (\bar b')$  for some $ r=1, 2$.  Otherwise,  $ \cO (N^F/Z) \bar b'  \cong     \cO (N^F/Z)   \hat\sigma\! (\bar b')$.   
\end{prop}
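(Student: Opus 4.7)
The plan is to follow the two-step template already used in Propositions~\ref{prop:ABC} and~\ref{prop:D}(i): apply Proposition~\ref{prop:modcentralBDR} to obtain the Morita equivalence between $\cO(\bG^F/Z)\bar b$ and $\cO(N^F/Z)\bar b'$, and then apply Theorem~\ref{prop:BDRresult} to obtain the isomorphism $\cO(N^F/Z)\hat\sigma^r(\bar b') \cong \cO(N^F/Z)\bar b'$ for an appropriate $r$. Both tools require hypotheses that must first be checked.

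First I would verify that $N^F/\bL^F$ is cyclic. Since $\bG$ is simply connected of type $G_2$, $F_4$, or $E_6$, the fundamental group $Z(\bG)$ is trivial or cyclic of order $3$. As noted in the paragraph preceding Proposition~\ref{prop:ABC}, the quotient $C_{\bG^*}(s)/C^{\circ}_{\bG^*}(s)$ embeds into $\Irr(Z(\bG))$, which is cyclic in all three cases, so $N^F/\bL^F$ is cyclic. Next I would check that $C^{\circ}_{\bL^*}(s)$ has only classical components. If $s$ is non-isolated in $\bG^*$, then $\bL^*$ is a proper $F$-stable Levi subgroup of $\bG^*$, and every proper Levi subgroup in types $G_2$, $F_4$, $E_6$ has only type $A$ components, so the same is true of $C^{\circ}_{\bL^*}(s)$. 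If $s$ is isolated in $\bG^*$, then $\bL^* = \bG^*$ and I would invoke the classification of isolated semisimple elements in simply connected exceptional groups (for example \cite[Section~5]{B2} or \cite[Table~6.2]{Ta}) to observe that for these three types the connected centraliser of any such $s$ is a product of classical components. With both hypotheses satisfied, Proposition~\ref{prop:modcentralBDR} yields the asserted Morita equivalence and Theorem~\ref{prop:BDRresult} produces $\cO(N^F/Z)\hat\sigma^r(\bar b') \cong \cO(N^F/Z)\bar b'$ with $r = r_{\bL^{*F}}(s)$.

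Finally I would compute $r$. Because $\bG$ is simply connected, $\bG^*$ is adjoint of the same type with $Z(\bG^*) = 1$, so $a_{\bG^{*F}}(s) = o(s)$. When $s$ is non-isolated in $\bG^*$, the element $s$ is isolated in the classical Levi $\bL^*$ by construction, and the general bound for isolated $\ell'$-elements in groups with classical components (recalled immediately before Proposition~\ref{prop:ABC}) gives $a_{\bL^{*F}}(s) \leq 2$; combined with $s$ being an $\ell'$-element this forces $r = 1$. When $s$ is isolated in $\bG^*$, the possible orders $o(s)$ coming from the classification are $\{2,3\}$ for $G_2$ and $E_6$ and $\{2,3,4\}$ for $F_4$, and $r$ is the multiplicative order of $\ell$ modulo $o(s)$. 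This equals $1$ unless $o(s) = 3$ with $\ell \equiv 2 \pmod 3$ or $o(s) = 4$ with $\ell \equiv 3 \pmod 4$, in which cases $r = 2$. The main point of care is verifying the classical-components property for the isolated cases; once the relevant centralisers are read off the classification, everything else follows formally from the two cited results.
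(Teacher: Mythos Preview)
Your proposal is correct and follows essentially the same route as the paper: invoke Proposition~\ref{prop:modcentralBDR} for the Morita equivalence, observe that $C^{\circ}_{\bL^*}(s)$ has only classical components (via \cite[Section~5]{B2}), apply Theorem~\ref{prop:BDRresult}, and compute $r_{\bL^{*F}}(s)$ case by case. One small slip: your claim that every proper Levi subgroup in types $G_2$, $F_4$, $E_6$ has only type~$A$ components is false (e.g.\ $F_4$ has Levis of type $B_3$ and $C_3$, and $E_6$ has Levis of type $D_5$), but all that is needed is that the components are \emph{classical}, which is true and suffices for the argument.
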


\begin{proof} 
	The first assertion is Proposition~\ref{prop:modcentralBDR}. If $s$ is isolated in $\bG^*$ and either $o(s) = 3$ and $\ell \equiv 2$ mod $3$, or $o(s) = 4$ and $\ell \equiv 3$ mod $4$, then $r_{\bL^{*F}}(s) = 2$. Otherwise  $r_{\bL^{*F}}(s) = 1$. Since for all non-trivial semisimple $\ell'$-elements $s \in \bG^{*F}$, $C^{\circ}_{\bL^*}( s)$ has all classical components by \cite[Section 5]{B2}, 	the result follows from Theorem~\ref{prop:BDRresult}. 
\end{proof}

\begin{prop}\label{prop:E7}
	Suppose that $\bG$ is of type $E_7$ and $s\ne 1 $. Then  $\cO (\bG^F/Z) \bar b $  is Morita equivalent to  $ \cO (N^F/Z) \bar b' $. If $\ell \equiv 2$ mod $3$ and one of the following holds, 
	\begin{itemize}
		\item $s$ is isolated in $\bG^{*}$ and $o(s) = 3$
		\item $s$ is not isolated in $\bG^{*}$, $s \in Z(\bL^*)^F$ and $\bL^*$ has a component of type $E_6$
		\item $s$ is not isolated in $\bG^{*}$, $\bL^*$ has a component of type $E_6$ and $a_{\bL^*}(s) = 3$,
	\end{itemize}
	or if $\ell \equiv 3 $ mod $4$ and $s$ is isolated in $\bG^{*}$ with $o(s) = 4$, then  $ \cO (N^F/Z) \bar b'  \cong     \cO (N^F/Z)   \hat\sigma^r\! (\bar b')$  for some $ r=1,2 $.  Otherwise, $ \cO (N^F/Z) \bar b'  \cong     \cO (N^F/Z)   \hat\sigma\! (\bar b')$.     
\end{prop}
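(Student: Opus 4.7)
The plan is to mirror Propositions~\ref{prop:D} and \ref{prop:G2F4E6}, splitting the argument by whether $C_{\bL^*}^{\circ}(s)$ has only classical components or has a component of exceptional type; in type $E_7$ the only non-classical proper subdiagram is $E_6$, and a Levi of $E_7$ containing $E_6$ as a component must be the Levi of type $E_6$ itself. Since $|Z(\bG)|\le 2$ for $\bG$ of type $E_7$, the quotient $C_{\bG^*}(s)/C_{\bG^*}^{\circ}(s)$, and hence $N^F/\bL^F$, is cyclic, so Proposition~\ref{prop:modcentralBDR} yields the Morita equivalence between $\cO(\bG^F/Z)\bar b$ and $\cO(N^F/Z)\bar b'$. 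Note that $\bG^*$ is adjoint, so $Z(\bG^*)=1$ and ``$s$ isolated in $\bG^*$'' is equivalent to $\bL^*=\bG^*$, in which case $a_{\bG^{*F}}(s)=o(s)$.

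Suppose first that $C_{\bL^*}^{\circ}(s)$ has only classical components. Applying Theorem~\ref{prop:BDRresult} reduces the problem to computing $r=r_{\bL^{*F}}(s)$. If $s$ is isolated in $\bG^*$, the classification of isolated $\ell'$-classes of adjoint $E_7$ (see \cite[Section~5]{B2}) shows that $o(s)\in\{2,3,4\}$ with $C_{\bG^*}^{\circ}(s)$ always of classical type, whence $r=1$ for $o(s)=2$ (forcing $\ell$ odd), $r\in\{1,2\}$ for $o(s)=3$ according to $\ell\bmod 3$, and $r\in\{1,2\}$ for $o(s)=4$ according to $\ell\bmod 4$; these recover the first and fourth bulleted conditions. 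If $s$ is not isolated and $\bL^*$ has only classical components, isolatedness of $s$ in $\bL^*$ forces $a_{\bL^{*F}}(s)\le 2$, so $r=1$. If $\bL^*$ is the Levi of type $E_6$ and $s\notin Z(\bL^*)^F$, then the projection of $s$ to the adjoint quotient $E_6^{\mathrm{ad}}$ is a non-trivial isolated element, which must have order $3$ with centraliser of type $3A_2$; hence $C_{\bL^*}^{\circ}(s)$ is still classical, $a_{\bL^{*F}}(s)=3$, and $r\in\{1,2\}$ according to $\ell\bmod 3$, matching the third bulleted condition.

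The remaining situation is $\bL^*$ of type $E_6$ with $s\in Z(\bL^*)^F$, so that $C_{\bL^*}^{\circ}(s)=\bL^*$ contains the $E_6$ factor. Here I would invoke Theorem~\ref{prop:central} with $a$ the smallest positive integer such that $\hat\sigma^a$ fixes every unipotent character of $\bL^F$. Since the central torus contributes only the trivial unipotent character and there are no other simple factors, only the $E_6$ component matters; its unipotent characters are all rational apart from the two cuspidal characters $E_6[\theta]$, $E_6[\theta^2]$, whose values lie in $\QQ(\theta)$ for $\theta$ a primitive cube root of unity. Since $\hat\sigma(\theta)=\theta^\ell$, these characters are $\hat\sigma$-fixed precisely when $\ell\equiv 1\pmod 3$, and are always $\hat\sigma^2$-fixed; thus $a\in\{1,2\}$ according to $\ell\bmod 3$, matching the second bulleted condition. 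The main obstacle is the careful bookkeeping of isolated $\ell'$-semisimple classes and their centralisers in $E_7^{\mathrm{ad}}$ and $E_6^{\mathrm{ad}}$, together with the explicit rationality properties of the cuspidal unipotent characters of $E_6$ needed to pin down $a$.
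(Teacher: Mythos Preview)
Your proposal is correct and follows essentially the same approach as the paper: apply Proposition~\ref{prop:modcentralBDR} for the Morita equivalence, then case-split and invoke Theorem~\ref{prop:BDRresult} when $C_{\bL^*}^{\circ}(s)$ has only classical components (computing $r_{\bL^{*F}}(s)$ from the orders of isolated elements), and Theorem~\ref{prop:central} when $s\in Z(\bL^*)^F$ with an $E_6$ component (using the rationality of the unipotent characters of $E_6$ apart from $E_6[\theta],E_6[\theta^2]$). One small correction: when $\bL^*$ has an $E_6$ component and $s\notin Z(\bL^*)^F$, the projection of $s$ to adjoint $E_6$ need not have order $3$---it can also have order $2$ (centraliser of type $A_5A_1$); in that case $a_{\bL^{*F}}(s)=2$, the centraliser is still classical, and $r=1$, so this case is absorbed into the ``Otherwise'' clause and nothing in your argument breaks.
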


\begin{proof} 
	The first assertion is just Proposition~\ref{prop:modcentralBDR}. Suppose that $s$ is isolated in $\bG^*$. Then if $o(s) = 3$ and $\ell \equiv 2 $ mod $3$ or if $o(s) = 4$ and $\ell \equiv 3 $ mod $4$, $r_{\bG^{*F}}(s) = 2$. Otherwise $r_{\bG^{*F}}(s) = 1$. Since $C_{\bG^*}^{\circ}(s)$ has all classical components by \cite[Section 5]{B2}, the result follows by applying Theorem~\ref{prop:BDRresult} with $\bL^* = N^* = \bG^*$. 

	Now suppose that $s$ is not isolated in $\bG^*$ and $s \in Z(\bL^*)^F$. If $\ell \equiv 2 $ mod $3$ and $\bL^*$ has a component of type $E_6$, then the minimal $a$ such that $^{\hat \sigma ^a} \chi' = \chi'$ for all unipotent characters $\chi'$ of $\textnormal{Irr}(\bL^F)$ is 2. Otherwise the minimal such $a$ is 1, so the result follows from Theorem~\ref{prop:central}.   

	Finally, suppose that $s$ is not isolated in $\bG^*$ and $s \notin Z(\bL^*)^F$. If $\ell \equiv 2 $ mod $3$, $\bL^*$ has a component of type $E_6$ and $a_{\bL^*}(s) = 3$, then $r_{\bL^{*F}}(s) = 2$. Otherwise, $r_{\bL^{*F}}(s) = 1$. Again, since $C_{\bL^*}^{\circ}(s)$ has all classical components by \cite[Section 5]{B2}, the result follows from Theorem~\ref{prop:BDRresult}. 
\end{proof}

There are three non-trivial isolated elements in $E_8$ whose centralisers have an exceptional component. These cases are dealt with separately in Section~\ref{subsec:E8}. 

\begin{prop}\label{prop:E81}
	Suppose that $\bG$ is of type $E_8$, $ s\ne 1 $ and that if $s$ is isolated in $\bG^*$,  then  $C_{\bG^*}(s)$ has all classical components. The block $\cO (\bG^F/Z) \bar b $  is Morita equivalent to $ \cO (N^F/Z) \bar b' $.    
	\begin{itemize} 
		\item If $s$ is isolated in $\bG^*$, $o(s) = 5$ and $\ell \equiv 2 $ or $ 3 $ mod $5$ then   $ \cO (N^F/Z) \bar b'  \cong     \cO (N^F/Z)   \hat\sigma^r\! (\bar b')$  for some $ r\leq 4 $. \vspace{1ex}  
		\item If one of the following holds,
		\begin{itemize}
			\item $s$ is isolated in $\bG^*$, $o(s) = 3$ and $\ell \equiv 2 $ mod $3$; or $o(s) = 4$ and $\ell \equiv 3 $ mod $4$; or $o(s) = 5$ and $\ell \equiv 4 $ mod $5$; or $o(s) = 6$ and $\ell \equiv 5$ mod $6$, \vspace{.5ex}
			\item $s$ is not isolated in $\bG^*$, $s \in Z(\bL^*)^F$, $\bL^*$ has a component of type $E_6$ or $E_7$ and $\ell \equiv 2$ mod $3$, \vspace{.5ex}
			\item $s$ is not isolated in $\bG^*$, $s \notin Z(\bL^*)^F$, $\bL^*$ has a component of type $E_7$, $a_{\bL^*}(s) = 4$ and $\ell \equiv 3$ mod $4$, or \vspace{.5ex}
			\item $s$ is not isolated in $\bG^*$, $s \notin Z(\bL^*)^F$, $\bL^*$ has a component of type $E_6$ or $E_7$, $a_{\bL^*}(s) = 3$ and $\ell \equiv 2$ mod $3$, \vspace{.5ex}
		\end{itemize}
		then  $ \cO (N^F/Z) \bar b'  \cong     \cO (N^F/Z)   \hat\sigma^r\! (\bar b')$  for some $ r=1,2 $.    \vspace{1ex} 
		\item  In all other cases $ \cO (N^F/Z) \bar b'  \cong     \cO (N^F/Z)   \hat\sigma\! (\bar b')$.    
	\end{itemize} 
\end{prop}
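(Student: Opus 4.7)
The first assertion is immediate from Proposition~\ref{prop:modcentralBDR}; note that since $\bG$ is simply connected of type $E_8$, $Z(\bG) = 1$, so $C_{\bG^*}(s) = C_{\bG^*}^\circ(s)$ is always connected. In particular $N^F = \bL^F$, which is trivially cyclic, and $b = b'$ whenever $s$ is isolated in $\bG^*$ (since then $\bL^* = \bG^*$). Thus the proof reduces entirely to bounding the minimal $r$ for which $\cO(N^F/Z)\hat\sigma^r(\bar b') \cong \cO(N^F/Z)\bar b'$.

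I would split into three subcases. If $s$ is isolated in $\bG^*$, then by hypothesis $C_{\bG^*}^\circ(s)$ has all classical components, so Theorem~\ref{prop:BDRresult} applies with $\bL^* = N^* = \bG^*$ and yields the bound $r = r_{\bG^{*F}}(s)$. Because $Z(\bG^{*F}) = 1$, $a_{\bG^{*F}}(s) = o(s)$, and $r$ is the multiplicative order of $\ell$ modulo $o(s)$. The classification of isolated $\ell'$-elements in $E_8$ (\cite[Section~5]{B2}, \cite[Table~6.2]{Ta}), combined with our hypothesis on the centralizer, restricts $o(s)$ to $\{2,3,4,5,6\}$. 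A direct computation then yields $r = 4$ in the case $o(s) = 5$, $\ell \equiv 2,3 \pmod 5$; $r = 2$ in the four listed subcases on $o(s) \in \{3,4,5,6\}$; and $r = 1$ otherwise.

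For the non-isolated cases I would proceed as in the proof of Proposition~\ref{prop:E7}. If $s \in Z(\bL^*)^F$, apply Theorem~\ref{prop:central}: the claim reduces to bounding the minimal $a$ fixing every unipotent character of $\bL^F$, and by \cite[Table~1]{G} this equals $2$ exactly when $\bL^*$ carries an $E_6$ or $E_7$ component and $\ell \equiv 2 \pmod 3$, and equals $1$ otherwise. If $s \notin Z(\bL^*)^F$, invoke Theorem~\ref{prop:BDRresult}: here $r = r_{\bL^{*F}}(s)$ is the multiplicative order of $\ell$ modulo $a_{\bL^*}(s)$, and the possible values of $a_{\bL^*}(s)$ come from isolated elements in the simple factors of $\bL^*$. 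The main obstacle will be verifying, in the cases where $\bL^*$ carries an $E_6$ or $E_7$ component, that $C_{\bL^*}^\circ(s)$ still has all classical components so that Theorem~\ref{prop:BDRresult} is applicable; this requires inspecting, for each relevant value of $a_{\bL^*}(s) \in \{3,4\}$, the centralizer of an isolated element of the corresponding order in the exceptional component, and confirming that the enumerated congruence conditions on $\ell$ exhaust all situations in which $r_{\bL^{*F}}(s) > 1$.
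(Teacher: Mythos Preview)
Your proposal is correct and follows essentially the same three-case split as the paper's proof: Proposition~\ref{prop:modcentralBDR} for the Morita equivalence, then Theorem~\ref{prop:BDRresult} in the isolated case, Theorem~\ref{prop:central} in the non-isolated central case, and Theorem~\ref{prop:BDRresult} again in the non-isolated non-central case, with the bounds on $r$ coming from the orders of the relevant isolated elements.

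The only place where you hedge and the paper does not is the final verification that $C_{\bL^*}^\circ(s)$ has all classical components when $\bL^*$ carries an $E_6$ or $E_7$ factor. The paper simply asserts this, citing \cite[Section~5]{B2}. The point is that since $s \notin Z(\bL^*)^F$, the projection of $s$ to any exceptional simple factor of $\bL^*/Z(\bL^*)$ is a non-trivial isolated element there, and by the tables in \cite{B2} every non-trivial isolated element of $E_6$ or $E_7$ has connected centraliser with only classical components. So your anticipated obstacle dissolves on inspection, and your argument goes through exactly as the paper's does.
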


\begin{proof}
	First suppose that $s$ is isolated in $\bG^*$. If $o(s) = 5$ and $\ell \equiv 2 $ or $ 3 $ mod $5$ then $r_{\bG^{*F}}(s) = 4$; if $o(s) = 3$ and $\ell \equiv 2 $ mod $3$, or $o(s) = 4$ and $\ell \equiv 3 $ mod $4$, or $o(s) = 5$ and $\ell \equiv 4 $ mod $5$, or $o(s) = 6$ and $\ell \equiv 5$ mod $6$ then $r_{\bG^{*F}}(s) = 2$; otherwise $r_{\bG^{*F}}(s) =1 $. As we are assuming that $C_{\bG^*}(s)$ has all classical components, we can then apply Theorem~\ref{prop:BDRresult}.

	Now suppose that $s$ is not isolated in $\bG^*$ and $s \in Z(\bL^*)^F$. If $\ell \equiv 2 $ mod $3$ and $\bL^*$ has a component of type $E_6$ or $E_7$, then the minimal $a$ such that $^{\hat \sigma ^a} \chi' = \chi'$ for all unipotent characters $\chi'$ of $\textnormal{Irr}(\bL^F)$ is 2. Otherwise the minimal such $a$ is 1, and the result follows from Theorem~\ref{prop:central}. 

	Finally, suppose that $s$ is not isolated in $\bG^*$ and $s \notin Z(\bL^*)^F$. If $\bL^*$ has a component of type $E_7$, $\ell \equiv 3 $ mod $ 4$ and $a_{\bL^*}(s)= 4$ or if $\bL^*$ has a component of type $E_6$ or $E_7$, $\ell \equiv 2 $ mod $3$ and $a_{\bL^*}(s) = 3$, then $r_{\bL^{*F}}(s) = 2$. Otherwise, $r_{\bL^{*F}}(s) = 1$. 	Thus since $C_{\bL^*}^{\circ}(s)$ has all classical components, the result follows from Theorem~\ref{prop:BDRresult}.
\end{proof}

\subsection{Isolated blocks of $E_8$ with $C_{\bG^*}(s)$ of non-classical type}\label{subsec:E8}
In this subsection we assume that  $\bG = E_8$.  We will deal with the non-trivial isolated semisimple elements $s \in \bG^{*F}$ which are not covered by Proposition~\ref{prop:E81}.

\begin{Notation}\label{not:mathcalCB}  We let $\mathcal{C}_b$ denote the set of blocks in $\cE_{\ell}(\bG^F, s)$ which are Galois conjugates of $b$, that is, blocks of the form ${\hat \sigma ^m} (b) $, $ m \in {\mathbb N} $.  
\end{Notation}

\begin{lem}
	\label{lem:orders}
Set $|\mathcal{C}_b| = m$ and $r = r_{\bG^{*F}}(s)$. Then $\hat \sigma^{r}(b)  $ is a block in $ \cE_{\ell}\left(\bG^F, s\right)$ and $ b = \hat \sigma ^{n} (b)$ for some $n \leq  rm $. 
\end{lem}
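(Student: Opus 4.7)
The plan is to first show that $\hat\sigma^r(b)$ lies in $\cE_\ell(\bG^F, s)$, and then extract the bound on $n$ from a straightforward orbit-counting argument on the finite set $\mathcal{C}_b$.

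The key input for the first assertion is that $\bG = E_8$ is both simply connected and of adjoint type, so $Z(\bG^*) = 1$ and therefore $a_{\bG^{*F}}(s) = o(s)$. By the definition of $r = r_{\bG^{*F}}(s)$ this gives $\ell^r \equiv 1 \pmod{o(s)}$, hence the literal equality $s^{\ell^r} = s$ in $\bG^{*F}$. Now let $\chi \in \Irr(b)$ and write $\chi \in \cE(\bG^F, t)$ with $t_{\ell'}$ $\bG^{*F}$-conjugate to $s$. Iterating Lemma~\ref{lem:lusztiggalois}(ii) $r$ times yields ${}^{\hat\sigma^r}\!\chi \in \cE(\bG^F, t^{\ell^r})$, and since $(t^{\ell^r})_{\ell'} = (t_{\ell'})^{\ell^r}$ is $\bG^{*F}$-conjugate to $s^{\ell^r} = s$, we conclude ${}^{\hat\sigma^r}\!\chi \in \cE_\ell(\bG^F, s)$. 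Combining this with Lemma~\ref{lem:charactergalois}, which identifies $\Irr(\hat\sigma^r(b))$ with $\{{}^{\hat\sigma^r}\!\chi : \chi \in \Irr(b)\}$, gives $\Irr(\hat\sigma^r(b)) \subseteq \cE_\ell(\bG^F, s)$, so $\hat\sigma^r(b)$ is a block in $\cE_\ell(\bG^F, s)$, proving the first assertion.

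For the second assertion, $\hat\sigma^r$ then acts as a permutation on the finite set of blocks of $\cO\bG^F$ lying in $\cE_\ell(\bG^F, s)$. The $\hat\sigma^r$-orbit of $b$ is $\{\hat\sigma^{rj}(b) : j \geq 0\}$, and by construction each member of this orbit is a Galois conjugate of $b$ in $\cE_\ell(\bG^F, s)$, i.e.\ an element of $\mathcal{C}_b$. Hence the orbit has some size $k \leq m$, and $\hat\sigma^{rk}(b) = b$, so we may take $n := rk \leq rm$.

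I do not anticipate a substantial obstacle. The whole argument is bookkeeping around the identity $s^{\ell^r} = s$, which in turn rests only on $Z(\bG^*) = 1$ for $E_8$; everything else is an application of Lemmas~\ref{lem:lusztiggalois} and~\ref{lem:charactergalois} plus the observation that a finite permutation has orbits of bounded size.
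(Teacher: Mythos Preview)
Your approach is essentially the paper's, but there is a small technical slip in the first step. Lemma~\ref{lem:lusztiggalois}(ii) is stated only for semisimple $\ell'$-elements, so it does not apply to your arbitrary $t \in \bG^{*F}$; and in fact the conclusion ${}^{\hat\sigma}\chi \in \cE(\bG^F, t^{\ell})$ is false in general when $t$ has nontrivial $\ell$-part, because $\hat\sigma$ was chosen to \emph{fix} $\ell$-power roots of unity rather than raise them to the $\ell$-th power. The fix is immediate: just take $\chi \in \Irr(b) \cap \cE(\bG^F, s)$, which is nonempty by the Brou\'e--Michel--Hiss result recalled before Example~\ref{ex:idempotentcoefficients}, and apply Lemma~\ref{lem:lusztiggalois}(ii) directly with the $\ell'$-element $s$ in place of $t$. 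The rest of your argument then goes through unchanged.

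For the second assertion the paper argues marginally differently: it observes that the stabiliser of $\cE(\bG^F, s)$ in $\langle\hat\sigma\rangle$ has the form $\langle\hat\sigma^u\rangle$ with $u \mid r$, identifies $\mathcal{C}_b$ \emph{exactly} as the $\langle\hat\sigma^u\rangle$-orbit of $b$, and obtains $n = um \leq rm$. Your version, bounding the $\langle\hat\sigma^r\rangle$-orbit inside $\mathcal{C}_b$, is an equally valid route to the same bound.
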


\begin{proof}  
	By Lemma~\ref{lem:lusztiggalois} (ii) the  action of  the group $\langle \hat \sigma  \rangle  $ on $\Irr (\bG^F)$  induces an   action on the set of Lusztig series $\cE (\bG^F, t)$, where $t$ runs over the $\bG^{*F}$-conjugacy classes of $\ell'$-elements  of $\bG^{*F}$. Further, since $Z(\bG^{*F})=1 $, $a_{\bG^{*F}} (s)  =o(s)$ and  it follows that  $s^{\ell^r}= s$. Thus   the stabiliser in  $\langle \hat \sigma  \rangle$  of $\cE(\bG^F,  s)$ is of the form $ \langle \hat \sigma ^u \rangle  $, for some  non-negative integer $u$ dividing $r$. In particular, the first assertion is proved. Now, the set of $\ell$-blocks of $\bG^F$  contained in $\cE_{\ell} (\bG^F, s)$ is $ \langle \hat \sigma ^u \rangle  $ invariant and ${\mathcal  C}_b $ is the $\langle \hat \sigma ^u \rangle  $-orbit of $b$ under the action of $ \langle \hat \sigma ^u \rangle $ on this set, hence $ b = \hat \sigma ^{um} (b) $, proving the second assertion.
\end{proof}

\begin{prop}\label{prop:E82}
		\label{prop:isolatedE8}
		Let $\bG$ be a simple algebraic group of type $E_8$  and suppose that   $1\ne s \in \bG^{*F}$  is  an isolated semisimple $\ell'$-element such that $C_{\bG^*}(s)$ has an exceptional component. Then $o(s)=2 $ or $o(s)= 3$. If $o(s) = 3$ then $\cO \bG^F b  \cong  \cO \bG^F \hat \sigma ^r (b)  $  for some $ r \leq  4 $ and if $o(s) = 2$ then $\cO \bG^F b  \cong  \cO \bG^F \hat \sigma ^r (b)$ for some $ r\leq 2 $. 
\end{prop}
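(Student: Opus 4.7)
The plan is to first pin down the possible values of $o(s)$ via the classification of isolated semisimple elements in $E_8$, and then bound the length of the $\hat\sigma$-orbit of $b$ by combining the Jordan decomposition with Geck's rationality results for unipotent characters of exceptional groups, exactly as already used in Proposition~\ref{prop:unipotent}.

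By the classification of (quasi-)isolated semisimple elements in $E_8$, see for example \cite[Table 2]{B2} or \cite[Table 6.2]{Ta}, among the non-trivial isolated semisimple elements of $\bG^*$ the only ones whose centraliser has an exceptional component are those of order $2$, for which $C_{\bG^*}(s)$ is of type $A_1\cdot E_7$, and those of order $3$, for which $C_{\bG^*}(s)$ is of type $A_2\cdot E_6$. This establishes $o(s)\in\{2,3\}$. Since $E_8$ has trivial centre, $Z(\bG^*)^F=1$, so $a_{\bG^{*F}}(s)=o(s)$ and therefore $r:=r_{\bG^{*F}}(s)$ equals the multiplicative order of $\ell$ modulo $o(s)$: $r=1$ when $o(s)=2$ (since $\ell$ is odd), and $r\in\{1,2\}$ when $o(s)=3$ (since $\ell\ne 3$). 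Invoking Lemma~\ref{lem:orders} then gives $b=\hat\sigma^{um}(b)$ with $u\mid r$ and $m=|\mathcal{C}_b|$, so it suffices to show $um\le 2$ when $o(s)=2$ and $um\le 4$ when $o(s)=3$, since the equality of blocks $b=\hat\sigma^{um}(b)$ trivially yields the required isomorphism.

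The heart of the argument is the bound on $m$. Fix $\chi\in\textnormal{Irr}(b)$ and choose a semisimple $s'\in\bG^{*F}$ with $\chi\in\cE(\bG^F,s')$ and $s'_{\ell'}$ conjugate to $s$. Because $Z(\bG)$ is connected, Jordan decomposition associates to $\chi$ a unipotent character of $C_{\bG^*}(s')^F$, and $C^\circ_{\bG^*}(s')$ is a reductive subgroup of $C_{\bG^*}(s)$ whose exceptional components are (sub)components of $E_7$ (if $o(s)=2$) or of $E_6$ (if $o(s)=3$). By \cite[Table 1]{G}, the unipotent characters of finite reductive groups with exceptional components of type at most $E_7$ are $\hat\sigma^2$-fixed, while those with exceptional components of type at most $E_6$ are $\hat\sigma$-fixed; classical unipotent characters are rational. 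Combining this with Lemma~\ref{lem:thetatildeinG}(ii) (which, since $Z(\bG^*)^F=1$, guarantees $^{\hat\sigma^r}\theta=\theta$ for the tori characters entering $\chi$), together with Lemma~\ref{lem:charactergalois}, one deduces that $\hat\sigma^{2}(b)=b$ in the case $o(s)=2$ and $\hat\sigma^{2r}(b)=b$ with $2r\le 4$ in the case $o(s)=3$, giving the claimed bounds.

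The main obstacle is the transfer of the rationality statement for unipotent characters of $C_{\bG^*}(s')^F$ to characters of $\bG^F$ in $\cE_\ell(\bG^F,s)$: when $C_{\bG^*}(s')$ has an exceptional component one cannot invoke Lemma~\ref{lem:uniformprojections}, so one must handle the $E_6$ and $E_7$ factors directly, most likely by an ad hoc analysis building on Enguehard's description of quasi-isolated blocks of $E_8$ in combination with Geck's rationality tables. A secondary point, easily handled, is that characters in $\cE_\ell(\bG^F,s)\setminus\cE(\bG^F,s)$ come from centralisers $C_{\bG^*}(s')\subseteq C_{\bG^*}(s)$, whose exceptional components are again bounded by $E_7$ or $E_6$, so the same rationality input suffices throughout.
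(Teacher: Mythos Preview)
Your proposal identifies the right invariants ($o(s)\in\{2,3\}$, the value of $r_{\bG^{*F}}(s)$, and the appeal to Lemma~\ref{lem:orders}), but the central step --- transferring rationality of unipotent characters of $C_{\bG^*}(s')^F$ back to characters of $\bG^F$ --- is exactly the point where the argument breaks down, and you say so yourself. When $C_{\bG^*}(s')$ has an exceptional component, Lemma~\ref{lem:uniformprojections} no longer applies, so knowing that $^{\hat\sigma^r}\theta=\theta$ for all pairs $(\bT,\theta)$ and that the Jordan correspondent is $\hat\sigma^a$-fixed does not let you conclude that $\chi$ is $\hat\sigma^{\mathrm{lcm}(a,r)}$-fixed: two distinct characters in $\cE(\bG^F,s')$ can have identical uniform projections. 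What you would need is an equivariance statement for the Jordan decomposition itself under $\hat\sigma$, and that is neither proved in the paper nor cited in your proposal; the sentence ``most likely by an ad hoc analysis building on Enguehard's description'' is not a proof. Incidentally, your claim that $E_6$ unipotent characters are always $\hat\sigma$-fixed is wrong: when $\ell\equiv 2\pmod 3$ the cuspidal unipotent characters $E_6[\theta]$, $E_6[\theta^2]$ are swapped by $\hat\sigma$ (compare the paper's treatment in Proposition~\ref{prop:E7}).

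The paper's own argument avoids Jordan decomposition entirely. Instead it uses the parametrisation of blocks in $\cE_\ell(\bG^F,s)$ by $e$-cuspidal pairs $(\bM,\lambda)$ (via \cite{K/M} for bad $\ell$ and \cite{C/E4} for good $\ell$), observes that $\hat\sigma$ acts on the labelling set by $(\bM,\lambda)\mapsto(\bM,{}^{\hat\sigma}\lambda)$, and then checks case by case from the tables that for blocks with non-cyclic defect the number of labels in a given line with equal degree is at most $2$ --- hence $|\mathcal{C}_b|\le 2$. Cyclic defect is handled by Proposition~\ref{prop:cyclicklein}. This is more laborious but requires no compatibility statement between Jordan decomposition and Galois action.
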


\begin{proof} 
	By \cite[Table 1, Table 5]{K/M}, it is enough to consider $s$ in the following cases.
	
	\break
	\begin{longtable}{c|c}
		$o(s)$ 	& 	Components of $C_{\bG^*}(s)^F$\\ \hline\hline
		2		& 	$E_7 \times A_1$ \\
		3		&	$E_6 \times A_2$ \\
		3		&	${^2E}_6 \times {^2A}_2$
	\end{longtable}	
In particular, the first assertion of the proposition holds. If $b$ has cyclic defect groups, then the remaining assertion follows from Proposition \ref{prop:cyclicklein}. For  the rest of the proof we assume that $b$ has non-cyclic defect groups. If $ o(s)=2 $, then $r_{\bG^{*F}} (s) =1 $ and  if $o(s)=3$, then $r_{\bG^{*F}} (s) =2$. Thus, by Lemma~\ref{lem:orders}  it suffices to prove that  $ |\mathcal{C}_b| \leq 2$. Let $\bG^F = E_8(q)$, and if   $\ell $ is odd, let  $e$ be the order of $q$  modulo $\ell $. If  $ \ell =2 $ let $ e$ be the order of $ q$ modulo $4$. We will use the parametrisation of $\ell$-blocks of $\bG^F$  by $e$-cuspidal pairs   to obtain the desired bound on $ |\mathcal{C}_b|$.

First suppose that $\ell \leq 5$ (i.e. $\ell$ is bad for $\bG$).   By \cite[Theorem 1.2]{K/M}, the blocks in $\cE_{\ell}(\bG^F, s)$ are in bijection with $\bG^F$-conjugacy classes of pairs $(\bM, \la)$ of $\bG$ such that  $(\bM, \la) $ is an $e$-cuspidal pair of $\bG$,  and  $\la  \in \cE(\bM^F, s)$ is of quasi-central $\ell$-defect. The bijection is described via Lusztig induction: a block corresponds to a pair $ (\bM, \la)$ if and only if all irreducible constituents of $R_{\bM}^{\bG} (\la)$ lie in the block. The tables in Section 6 of \cite{K/M} list the $e$-cuspidal pairs of $\bG$. Since $\hat \sigma $ commutes with Lusztig induction, if the pair $(\bM, \la)$   corresponds to the  block $b$, then $(\bM,  \,^{\hat\sigma^m }\la)$  corresponds to $\hat\sigma^m(b)$ for $\hat  \sigma^m(b)$ in   $\mathcal{C}_b$. Further, if $g\in \bG^F$, then $^g \lambda = \lambda$ if and only if $^g(\,^{\hat \sigma^m} \! \lambda ) =\,  ^{\hat \sigma ^m} \! \lambda$, hence $(\bM, \la )$ and $ (\bM, \,^{\hat \sigma^m} \! \lambda)$ have the   same relative Weyl group. Thus, all blocks in $ \mathcal{C}_b $ correspond to the same numbered line of the tables in Section 6 of \cite{K/M}. Moreover, since  $\la $ and $\,^{\hat\sigma^m }\la $ have  the same degree, by the degree formula for Jordan correspondence (see \cite[Remark 13.24]{D/M}), it follows that if $\mathcal {C}_b $ has more than one element  then  the  $\lambda $ column of the relevant line of the table contains  at least two entries of the same degree. Finally, the only relevant lines of the tables  are those which correspond to blocks with non-cyclic and in particular  non-trivial defect. Inspection of the tables yields $|\mathcal{C}_b|\leq 2$. 

Now suppose that $\ell \geq 7$.   In this case  block distributions  are described in \cite{C/E4}.  By  \cite[Theorem 4.1]{C/E4},  the blocks of $\bG^F$  in $\cE_{\ell}(\bG^F, s)$ are in bijection with $\bG^F$-conjugacy classes of pairs $(\bM, \la)$ of $\bG$ such that  $(\bM, \la)$ is an $e$-cuspidal pair of $\bG$ and with $\la  \in \cE(\bM^F, s)$; the bijection is defined by the same condition  on Lusztig induction as for bad $\ell$ above. Further, if $ (\bM, \la) $ is an $e$-cuspidal pair for $\bG$ and $\alpha \in \cE(C_{\bM^*}(t)^F, 1) $ is in Jordan correspondence with $\la$, then $(C_{\bM^*} (t)^F,  \alpha )$ is a unipotent $e$-cuspidal pair for $C_{\bG^*}(t)$  and the $\ell$-block of $\bG^F$  corresponding to  $ (\bM, \la)$ and the unipotent $\ell$-block of  $C_{\bG^*} (t)^F $  corresponding to $ (C_{\bM^*} (t)^F, \alpha)$   have  isomorphic defect groups  \cite[Theorem 4.1, Proposition 5.1]{C/E4}.  Now, if the multiplicity of the $e$th-cyclotomic polynomial $\Phi_e$ in the polynomial order of $C_{\bG^*}(s)^F$ is at most $1$, then the Sylow $\ell$-subgroups are cyclic, and  consequently every $\ell $-block of $ C_{\bG^*}(s)^F$ has cyclic defect. Thus, we may assume that this multiplicity is at least $2$; that is, $e$ is one of $ 1, 2, 3, 4, 6$.  

Suppose that $e = 1$ or $4$.   We   refer   again to the tables  of  \cite[Section 6]{K/M}  for a list of  $e$-cuspidal pairs noting that  now  unnumbered lines  also correspond to blocks.   By the  same considerations as for the bad primes case   it follows that   any two  blocks  in   $\mathcal{C}_b $  correspond to the same (numbered or unnumbered) line of the table  and that   if $  |\mathcal {C}_b| >1  $ then  the  $\lambda $  column of the relevant line  of   the  table    contains  at least two entries of the same degree  and of  positive  $\ell$-defect.  As before, we obtain $|\mathcal{C}_b| \leq 2$.
The results for $e = 2$ follow by Ennola duality from the $e= 1$ case \cite[Section 3A]{B/M/M}.

Now suppose that $e = 3$. The results for $e = 6$ will again follow by Ennola duality. The third column of the following table lists pairs $(C_{\bM^*}(s)^F, \alpha)$ where $(C_{\bM^*}(s), \alpha)$ is a unipotent $3$-cuspidal pair of $C_{\bG^*}(s)$. These were calculated using CHEVIE \cite{CHEVIE} and GAP \cite{GAP3}.  
	
\footnotesize
	\begin{longtable}{ c| c| c | c  }
		\caption{Unipotent $3$-cuspidal pairs of $C_{\bG^*}(s)$ } \\
		
		Row
		& $C_{\bG^*}(s)^F$
		& $(C_{\bM^*}(s)^F, \alpha)  $
		& Defect groups \\ \hline\hline
		
		
		$1$
		& $E_7(q)A_1(q)$
		& $\left\{ \begin{array}{c}
		(\Phi_3^3.A_1(q), 1 \otimes 1) \\
		(\Phi_3^3.A_1(q), 1 \otimes \phi_{11})
		\end{array} \right.$
		& Not cyclic		\\
		
		$2$
		& $E_7(q)A_1(q)$
		& $\left\{ \begin{array}{c}
		(\Phi_1\Phi_3 .{^3\!D}_4(q).A_1(q), {^3\!D}_4[-1] \otimes 1) \\
		(\Phi_1\Phi_3 .{^3\!D}_4(q).A_1(q), {^3\!D}_4[-1] \otimes \phi_{11})
		\end{array} \right.$
		& Cyclic
		\\

		$3$
		& $E_7(q)A_1(q)$
		& $\left\{ \begin{array}{c}
		(\Phi_3.A_5(q).A_1(q), \phi_{42} \otimes 1) \\
		(\Phi_3.A_5(q).A_1(q), \phi_{42} \otimes  \phi_{11}) \\
		(\Phi_3.A_5(q).A_1(q), \phi_{2211} \otimes 1) \\
		(\Phi_3.A_5(q).A_1(q), \phi_{2211} \otimes  \phi_{11})
		\end{array} \right.$
		& Cyclic
		\\

		$4$
		& $E_7(q)A_1(q)$
		& $\left\{ \begin{array}{c}
		(E_7(q).A_1(q), 10 \mbox{ chars}\otimes 1) \\
		(E_7(q).A_1(q), 10 \mbox{ chars} \otimes  \phi_{11})
		\end{array} \right.$
		& Trivial
		\\

		\hline \hline


		$5$
		& $E_6(q)A_2(q)$
		& $\left( \Phi_3^4, 1\right)$  
		& Not cyclic
		\\

		$6$
		& $E_6(q)A_2(q)$
		& $\left( \Phi_3^2 . {^3\!D_4}(q), {^3\!D_4}[-1] \right)$  
		& Not cyclic
		\\
		
		$7$
		& $E_6(q)A_2(q)$
		& $\left\{ \begin{array}{c}
		\left( \Phi_3 . {E}_6(q), \phi_{81,6}\right)\\
		\left( \Phi_3  . {E}_6(q), \phi_{81,10} \right) \\
		\left( \Phi_3 . {E}_6(q), \phi_{90,8} \right) 			
		\end{array} \right. $  
		& Cyclic
		\\
		
		\hline \hline
		
		
		
		$8$
		& $^2\!E_6(q){^2A}_2(q)$
		& $\left\{ \begin{array}{c}
		\left(\Phi_3^2 \Phi_6. {^2\!A_2(q)}, 1 \otimes 1 \right)  \\
		\left(\Phi_3^2 \Phi_6. {^2\!A_2(q)}, 1 \otimes \phi_{21} \right) \\
		\left(\Phi_3^2 \Phi_6. {^2\!A_2(q)}, 1 \otimes \phi_{111} \right)
		\end{array} \right. $
		& Not cyclic
		\\
		
		$9$
		& $^2E_6(q){^2A}_2(q)$
		& $\left\{ \begin{array}{c}
		\left({^2\!E_6(q)} .{^2\!A_2(q)}, 9 \mbox{ chars}  \otimes 1 \right)  \\
		\left({^2\!E_6(q)}. {^2\!A_2(q)},  9 \mbox{ chars}  \otimes \phi_{21} \right) \\
		\left({^2\!E_6(q)} .{^2\!A_2(q)},  9 \mbox{ chars}  \otimes \phi_{111} \right)
		\end{array} \right. $
		& Trivial
		\\
		
		\hline
		
	\end{longtable} 

\normalsize

Again,  all blocks   in ${\mathcal C}_b  $ correspond to the same  line of the table,  and  the corresponding  $\alpha $'s     have  the  same degree. Since the only relevant lines are those  with  non-cyclic entry in the last  column, $|\mathcal{C}_b| = 1$.
\end{proof}

 \section{Defining Characteristic and Ree and   Suzuki Groups.} \label{sec:reesuz} In this section $p$ denotes a prime number and $\bG$ a simple, simply connected group defined over $\overline \FF_p $. Let $F :\bG \to \bG $ be an endomorphism a power of which is a Frobenius morphism, allowing for the case that $\bG^F$ is a Ree or Suzuki group. We freely use the notation of Sections \ref{sec:alggroups} and \ref{sec:BDR} in this context.

 \begin{prop}
	\label{prop:defchar}  Suppose that $p=\ell $.
	 Let $b$ be a block of $\cO \textbf{G}^F$, $Z \leq Z(\bG^F)$ and let $\bar b$ be the block of $\cO(\bG^F/Z)$ dominated by $b$. Then $\cO (\bG^F/Z) \hat \sigma (\bar b)  \cong \cO (\bG^F / Z) \bar b$.
\end{prop}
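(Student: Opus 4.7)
The strategy is to invoke the classical theorem of Humphreys: in defining characteristic $\ell = p$, every $\ell$-block of $\cO\bG^F$ has defect group either trivial (defect zero) or equal to a Sylow $\ell$-subgroup of $\bG^F$ (full defect). By first applying Lemma~\ref{lem:domblocks}(ii) and (iv) to absorb the $\ell'$-part of $Z$ (which preserves block algebras up to isomorphism and commutes with $\hat{\sigma}$), we may assume $Z$ is an $\ell$-subgroup of $Z(\bG^F)$. We then treat the two defect-group cases separately.

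In the defect zero case, $\cO\bG^F b \cong M_n(\cO)$ is a matrix algebra over $\cO$ for some $n$. Since every central $\ell$-subgroup of a finite group is contained in every defect group, we have $Z \leq D_b = 1$, forcing $Z = 1$ and $\bar{b} = b$. The block $\cO\bG^F \hat{\sigma}(b)$ is likewise a matrix algebra of the same rank (because $\hat{\sigma}$ preserves character degrees), so $\cO\bG^F b \cong \cO\bG^F \hat{\sigma}(b)$ and the conclusion is immediate.

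In the full defect case, Humphreys' description parametrises the full-defect blocks by characters of $Z(\bG^F)_{\ell'}$, so $b = b_{\zeta}$ for a unique character $\zeta$, and Lemma~\ref{lem:charactergalois} combined with Lemma~\ref{lem:lingalois} yields $\hat{\sigma}(b_{\zeta}) = b_{\zeta^{\ell}}$. To produce an $\cO$-algebra isomorphism $\cO\bG^F b_{\zeta} \cong \cO\bG^F b_{\zeta^{\ell}}$ fixing $Z$ pointwise, we pass to a regular embedding $i: \bG \hookrightarrow \widetilde{\bG}$. Via Lemma~\ref{lem:duality}(ii) applied to $\widetilde{\bG}$, a suitable element of $Z(\widetilde{\bG}^*)^F$ provides a linear character of $\widetilde{\bG}^F$ of $\ell'$-order implementing the Galois action $\zeta \mapsto \zeta^{\ell}$ on central characters. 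Twisting along this character, in exact analogy with the proof of Theorem~\ref{prop:BDRresult}, produces the desired isomorphism; it is trivial on $Z$ because the twist is by an $\ell'$-character while $Z$ is an $\ell$-group. Lemma~\ref{lem:domblocks}(iv) then delivers the claimed isomorphism of block algebras after dominating by $Z$.

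The principal obstacle lies in the full defect case, namely the construction of a concrete $\cO$-algebra isomorphism between $\cO\bG^F b_{\zeta}$ and $\cO\bG^F \hat{\sigma}(b_{\zeta})$ that restricts to the identity on the central $\ell$-subgroup $Z$. The technique is a direct adaptation of the twisting procedure developed for the non-defining characteristic cases in Section~\ref{sec:BDR}, simplified by the absence of the Bonnaf\'{e}--Dat--Rouquier reduction step.
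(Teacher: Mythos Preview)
Your proposal has a genuine gap in the full defect case. The twisting mechanism you borrow from Theorem~\ref{prop:BDRresult} requires a linear character $\eta$ of $\bG^F$ whose restriction to $Z(\bG^F)$ equals $\zeta^{\ell-1}$ (so that the twist $g \mapsto \eta(g^{-1})g$ carries $b_{\zeta}$ to $b_{\zeta^{\ell}}$). You propose to obtain $\eta$ by restricting a linear character $\tilde{\eta}$ of $\widetilde{\bG}^F$ coming from Lemma~\ref{lem:duality}(ii). But $\bG$ is simple and simply connected, so generically $\bG^F$ is perfect, and then every linear character of $\bG^F$ --- in particular every restriction $\tilde{\eta}|_{\bG^F}$ --- is trivial. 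Concretely, take $\bG = \mathrm{SL}_3$, $q = 4$, $\ell = p = 2$: here $Z(\bG^F) \cong C_3$, there exist $\zeta$ with $\zeta^{\ell} = \zeta^{2} \neq \zeta$, yet $\mathrm{SL}_3(4)$ is perfect and admits no nontrivial linear character to twist by. So the isomorphism you claim cannot be produced this way. (Note also that the machinery of Section~\ref{sec:alggroups} is set up under the standing hypothesis $p \neq \ell$, so invoking Lemma~\ref{lem:duality}(ii) here is already outside its stated scope.)

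The paper's proof uses a different mechanism: instead of a character twist it uses a \emph{group automorphism} of $\bG^F$, namely a power $\varphi = F_{\ell}^{\phi(m)-1}$ of an $\FF_{\ell}$-split Steinberg endomorphism, and shows (following \cite[Theorem 4.1]{F}) that the induced $\cO$-algebra automorphism sends $b$ to $\hat{\sigma}(b)$. A field automorphism acts nontrivially on $Z(\bG^F)$ even when $\bG^F$ is perfect, which is exactly what is needed. The passage to $\bG^F/Z$ is then handled by observing that $Z(\bG^F)$ is an $\ell'$-group in defining characteristic (\cite[Table 24.2]{M/T}), so Lemma~\ref{lem:domblocks}(ii) applies directly --- your reduction to ``$Z$ an $\ell$-group'' is valid but then forces $Z = 1$, a point you appear not to have noticed.
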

\begin{proof}	
 If $Z(\bG^F) \leq C_2$, $Z(\bG^F) \cong C_2 \times C_2$ or if $b$ is a principal or Steinberg block, then as shown in \cite[Theorem 4.1]{F}, $\hat \sigma(b) = b$. 
	
Suppose that $Z(\textbf{G}^F) \cong C_m$ for some $m>2$ coprime to $\ell$ and assume that $b$ is not the principal or the Steinberg block. Let $\varphi = F_\ell^{\phi(m)-1}$ be the group automorphism of $\bG^F$ defined in \cite[Theorem 4.1]{F} with $F_\ell$ an $\FF_\ell$-split Steinberg endomorphism of $\bG$ and $\phi$ the Euler totient function. Let $\varphi'$ be the $\cO$-algebra  isomorphism induced by $\varphi$. Then, applying the arguments of \cite[Theorem 4.1]{F} to $\varphi'$ instead of to the $k$-algebra isomorphism induced by $\varphi$, it follows that $\varphi' (b) = \hat \sigma(b)$. The restriction $\varphi'|_{\cO \bG^F b} : \cO \bG^F b \rightarrow \cO \bG^F \hat \sigma(b) $ is also an $\cO$-algebra isomorphism, hence $\cO \bG^F b \cong \cO \bG^F \hat \sigma (b)$ as $\cO$-algebras. 

Now let $Z$ be a central subgroup of $\bG^F$. Then $Z$ is an $\ell'$-group by \cite[Table 24.2]{M/T} so by Lemma~\ref{lem:domblocks} (ii), $\cO \bG^F b \cong   \cO(\bG^F/Z) \bar b  $ and $\cO \bG^F \hat \sigma (b) \cong   \cO(\bG^F/Z) \overline{\hat{\sigma}(b)}  $. Since $\overline{\hat \sigma (b)} = \hat \sigma (\bar b)$ by Lemma~\ref{lem:domblocks} (iv), it follows that $\cO (\bG^F/Z)  \bar b \cong   \cO(\bG^F/Z) \hat \sigma(\bar b)  $. 
\end{proof}

 The next proposition deals with the Suzuki and Ree groups in  non-defining characteristic.
 \pagebreak 
  \begin{prop} \label{prop:suzukiree}     Let $ b$ be a block of $\cO \bG^F$.
  \begin{enumerate}[(i)]  \item   Suppose that $  \ell  \ne p =2$,  $\bG $ is  of type $B_2 $ and $ \bG^F  $ is  the  Suzuki group $\,^2B_2  (2^{2n+1} )  $.        Then $\cO \bG^F \hat \sigma  (b)  \cong \cO \bG^F b$.
\item  Suppose that  $  \ell \ne p =3 $, $\ell $,  $\bG $ is  of type $G_2 $ and $ \bG^F  $ is  the   Ree  group $\,^2 G_2  (3^{2n+1} )  $.        Then $\cO \bG^F \hat \sigma  (b)  \cong \cO \bG^F b$.
\item    Suppose  that  $  \ell  \ne  p=2$,  $\bG $ is  of type $F_4 $ and $ \bG^F  $ is  the   Ree  group $\,^2F_4  (2^{2n+1} )  $.     There exists an $F$-stable  Levi subgroup $\bL$ of $ \bG$ and a block $c$ of $\bL^F$ such that     $ \cO \bG^Fb$ is Morita equivalent to  $\cO \bL^F c $  and  $\cO \bL^F c \cong  \cO \bL^F \hat \sigma^r (c)$  for some $r \leq  2$.  
\end{enumerate}
\end{prop}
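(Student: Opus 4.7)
The proof naturally splits into the three parts. For (i) and the odd-$\ell$ case of (ii) I would exploit the restricted torus structure of the Suzuki and small Ree groups to force cyclic defect and invoke Proposition~\ref{prop:cyclicklein}. For the remaining $\ell=2$ case of (ii) I would fall back on Ward's explicit character table of $^2G_2(q)$. For (iii) I would apply the Bonnaf\'e-Dat-Rouquier reduction via Remark~\ref{rem:reesuzuki} and then use the rationality machinery of Section~\ref{sec:FGLT}.

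For (i), the $F$-stable maximal tori of $\bG^F = {}^2B_2(q)$ are cyclic of orders $q-1$, $q+\sqrt{2q}+1$ and $q-\sqrt{2q}+1$; these three integers are coprime to $q$ and pairwise coprime, so for $\ell$ odd a Sylow $\ell$-subgroup of $\bG^F$ lies inside one of these cyclic tori and is cyclic. By Proposition~\ref{prop:cyclicklein} we obtain $\textit{sf}(\cO\bG^F b)=1$, from which $\cO\bG^F\hat\sigma(b)\cong\cO\bG^F b$ is immediate. The same reasoning applies to $^2G_2(q)$ for odd $\ell\ne 3$: the maximal tori are cyclic of orders $q-1$, $q+1$, $q\pm\sqrt{3q}+1$, whose odd parts are pairwise coprime, so Sylow $\ell$-subgroups are cyclic and Proposition~\ref{prop:cyclicklein} finishes this case. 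The remaining case $\ell=2$ in (ii) is more delicate since the Sylow $2$-subgroup of $^2G_2(q)$ is elementary abelian of order $8$ (as $q\equiv 3\pmod 8$ forces $(q-1)(q+1)$ to have $2$-part exactly $8$), putting it just outside the scope of Proposition~\ref{prop:cyclicklein}. Here I would consult Ward's explicit character table: the irrational values of $\mathrm{Irr}({}^2G_2(q))$ lie in small cyclotomic extensions of $\mathbb{Q}$, and $\hat\sigma$ acts on odd-order roots of unity as $\zeta\mapsto\zeta^2$ while fixing $\pm 1$. A direct case check of the $2$-block distribution would verify that every $2$-block is a union of $\hat\sigma$-orbits in $\mathrm{Irr}(\bG^F)$, so $\hat\sigma(b)=b$ and hence $\cO\bG^F\hat\sigma(b)\cong\cO\bG^F b$.

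For (iii), since $\bG = F_4$ is simultaneously simply-connected and adjoint, $Z(\bG)=1$, and by Steinberg's theorem $C_{\bG^*}(s)=C^\circ_{\bG^*}(s)$ for every semisimple $s\in\bG^{*F}$; in the BDR setup this forces $N^F=\bL^F$. Theorem~\ref{thm: BDR}, applied in the form permitted by Remark~\ref{rem:reesuzuki}, together with the descent step of Proposition~\ref{prop:modcentralBDR} (taking $Z=1$), then produces a Morita equivalence between $\cO\bG^F b$ and $\cO\bL^F c$, where $\bL$ is dual to the minimal Levi $\bL^*$ containing $C^\circ_{\bG^*}(s)$ and $c=b'$. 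To bound $r$ with $\cO\bL^F c\cong\cO\bL^F\hat\sigma^r(c)$, I would use that every Levi subgroup of $F_4$ has only classical components (types $A$, $B$, $C$), so $C^\circ_{\bL^*}(s)$ has all classical components; Theorem~\ref{prop:BDRresult} (or Theorem~\ref{prop:central} in the central case $s\in Z(\bL^*)^F$) then applies, and $r_{\bL^{*F}}(s)\le 2$ follows from a short case analysis of isolated elements in $F_4$ (possible orders $2$, $3$, $4$) combined with the classification of isolated elements in classical groups, which constrains $a_{\bL^{*F}}(s)$ to lie in $\{1,2,3,4\}$ and gives $r\le 2$ since $\ell\ne 2$.

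The main obstacle will be the $\ell=2$ case in (ii): Linckelmann's theorem (Proposition~\ref{prop:cyclicklein}) just fails to cover elementary abelian defect of order $8$, so the verification must be done by hand on Ward's character table, tracking the $\hat\sigma$-action on the small cyclotomic fields that appear. By comparison the enumeration of twisted Levis needed in (iii) is routine once the BDR extension of Remark~\ref{rem:reesuzuki} is in place.
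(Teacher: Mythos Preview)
Your approach to (i) and to the odd-$\ell$ part of (ii) matches the paper's. There are two places where you diverge or leave gaps.

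For (ii) with $\ell=2$, you miss a simplification: the paper observes (citing \cite[Theorem 5.7]{F}) that every \emph{non-principal} $2$-block of $^2G_2(q)$ has cyclic or Klein four defect, so Proposition~\ref{prop:cyclicklein} still applies, while the principal block is trivially $\hat\sigma$-fixed. No direct inspection of Ward's table is needed. Your proposed character-table check would also work, but it is extra labour and you have not actually carried it out.

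For (iii), your assertion that Theorem~\ref{prop:BDRresult} applies because ``every Levi subgroup of $F_4$ has only classical components'' glosses over a genuine obstruction: in $^2F_4$ a Levi of Dynkin type $B_2$ can have rational type $^2B_2$, and the cuspidal unipotent characters of $^2B_2(q)$ are \emph{not} determined by their uniform projections, so Lemma~\ref{lem:uniformprojections}---and hence Lemma~\ref{lem:conjofchi} and Theorem~\ref{prop:BDRresult}---fails in that case. The paper handles this by noting that when $C_{\bG^*}(s)$ has a $^2B_2$ component one in fact has $C_{\bG^*}(s)=\bL^*$, i.e.\ $s\in Z(\bL^*)^F$, and then applies Theorem~\ref{prop:central} with $a=2$ (the cuspidal unipotents of $^2B_2$ are $\hat\sigma^2$-fixed). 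Your parenthetical about the central case happens to land on the right tool, but you did not identify the $^2B_2$ failure, and your bound analysis conflates the $r_{\bL^{*F}}(s)$ of Theorem~\ref{prop:BDRresult} with the $a$ of Theorem~\ref{prop:central}: for proper Levis of $F_4$ isolated elements satisfy $a_{\bL^{*F}}(s)\le 2$, giving $r_{\bL^{*F}}(s)=1$ since $\ell$ is odd; the value $r=2$ arises solely through the $^2B_2$ route via Theorem~\ref{prop:central}. You also omit the unipotent case $s=1$, where $\bL=\bG$; the paper dispatches this exactly as in (i) and (ii), by reducing non-principal unipotent blocks to cyclic or Klein four defect.
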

\begin{proof}  
If $b$   is the principal block, then $\hat \sigma (b) = b$ and there is nothing to prove.  Thus we may assume that $b$ is non-principal. First suppose that $\bG^F$ is  as in (i) or (ii).    By the proof of \cite[Theorem 5.7]{F}, $b$ has  either  cyclic or Klein four defect groups and we are done by  Proposition~\ref{prop:cyclicklein}.  

Assume from now on that $\bG^F $ is of type $\,^2F_4 (2^{2n+1})$. For $s$ a semisimple  element of $\ell'$-order of $\bG^{*F}$, $ \cE_{\ell}(\bG^F, s)$ is a union of $\ell $-blocks of $\bG^F$ (see \cite[p. 113]{H1}). Moreover, Theorem~\ref{thm: BDR} and hence Proposition \ref{prop:modcentralBDR} continue to hold with $ \bL^* $ and $\bL $ as in Section 5. We note that since $ Z(\bG)  =1 $, $N^F=\bL^F$ and $ Z =1 $.

Let $s \in \bG^{*F}$ be a semisimple $\ell'$-element such that $b \in \cE_{\ell} (\bG^F, s) $. If $ s =1 $, then again by the proof of \cite[Theorem 5.7]{F}, $b$ has either cyclic or Klein four defect groups  or $b$ is the principal block  and in these cases we are done as above (with $\bL =\bG$). So we may assume that $s\ne 1 $ and hence that $\bL$ is a proper Levi subgroup of $\bG$. In particular every simple component of $\bL$ and of $C_{\bG^*}(s) $  has classical type. If $C_{\bG^*}(s) $ contains no rational component of type $ \,^2B_2 $, then the conclusion of Lemma~\ref{lem:uniformprojections} and hence of Theorem \ref{prop:BDRresult} holds for $s$ and $b$. Further, since all  components of $\bL$ are of classical type, $a_{\bL^*}(s) \leq  2 $  whence  $r_{\bL^*}(s) =1 $. Thus, we obtain $\cO \bL^F c \cong  \cO \bL^F \hat \sigma (c)$. If $C_{\bG^*}(s) $ has a  rational component of type $ \,^2B_2 $, then $ C_{\bG^*}(s) =\bL^*$ and the conclusion of Theorem \ref{prop:central} holds. Moreover, $^{\hat \sigma ^2}\! \chi' = \chi'$ for all unipotent characters $\chi' \in \textnormal{Irr}(\bL^{F})$. Hence $\cO \bL^F c \cong  \cO \bL^F \hat \sigma ^2(c)$.
\end{proof}

\section{Sporadic groups, exceptional covering groups and  alternating   groups.  }  \label{sec:analogues}
We  give  further analogues of  results of  \cite{F}   over $\cO$. Let $b$ be a block of $\cO G$ for a finite group $G$. 

\begin{prop}\label{prop:sporadic}   Suppose that $G$ is a   covering group of   sporadic  simple group   or  the Tits  group.  Then  $\cO G\hat \sigma(b) \cong \cO Gb$.  \end{prop}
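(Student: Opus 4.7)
The plan is to reduce the statement to showing $\hat{\sigma}(b) = b$ as central idempotents of $\cO G$, which of course gives $\cO G \hat{\sigma}(b) = \cO G b$. By Lemma~\ref{lem:charactergalois}, this is equivalent to showing that $\mathrm{Irr}(b)$ is stable under the map $\chi \mapsto {^{\hat{\sigma}}}\chi$, a purely character-theoretic condition that can be checked from the character table of $G$.

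First I would peel off the easy cases. If the defect groups of $b$ are cyclic, or if $\ell = 2$ and the defect groups are Klein four, then Proposition~\ref{prop:cyclicklein} gives $\textit{sf}(\cO G b) = 1$, which in particular implies $\cO G b \cong \cO G \hat{\sigma}(b)$ as $\cO$-algebras and we are done for these blocks. Thus one may assume for the remainder that the defect groups of $b$ are non-cyclic and, when $\ell = 2$, not Klein four.

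For the remaining blocks, the strategy is to invoke the first author's analysis in \cite{F}, where the analogous statement over $k$ was settled by inspection of the character tables in the Atlas (and in GAP's character table library) for each covering group of a sporadic simple group and for the Tits group. Concretely, one verifies for each such group $G$ and each prime $\ell$ that every $\ell$-block of positive non-cyclic (and non-Klein-four) defect contains an irreducible character which takes values in $\mathbb{Q}_\ell$ (equivalently, all of whose non-zero character values are fixed by $\hat{\sigma}$): indeed, $\hat{\sigma}$ acts trivially on $\ell$-power roots of unity by construction and as the $\ell$th power map on $\ell'$-roots of unity, so $^{\hat{\sigma}}\chi = \chi$ for any such character $\chi$. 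For such a $\chi \in \mathrm{Irr}(b)$, Lemma~\ref{lem:charactergalois} gives $\chi \in \mathrm{Irr}(\hat{\sigma}(b))$, and since each irreducible character lies in a unique block, $\hat{\sigma}(b) = b$.

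The only real work lies in the last step, and the main obstacle is organisational rather than conceptual: one must run through the list of sporadic groups, their Schur covers, and the relevant primes $\ell$, checking in each case that the ``good character'' exists in every block of non-cyclic defect. This is precisely the case-by-case inspection carried out in \cite{F} over $k$, and the passage from $k$ to $\cO$ is automatic via the bijection $\pi$ between blocks of $\cO G$ and blocks of $k G$, which intertwines $\hat{\sigma}$ and $\sigma$ (Lemma~\ref{lem:charactergalois}).
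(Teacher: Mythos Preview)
Your proposal is correct and follows essentially the same approach as the paper: dispose of blocks with cyclic (or Klein four) defect via Proposition~\ref{prop:cyclicklein}, then invoke the GAP/Atlas checks from \cite{F} to conclude $\hat\sigma(b)=b$ for the remaining blocks. The paper's own proof is even terser---it singles out only the cyclic case before deferring to the checks in \cite{F}---and does not spell out the precise character-theoretic criterion being verified; your expansion of that criterion is a reasonable reconstruction, though the exact test used in \cite{F} (cf.\ the remark opening Section~\ref{sec:FGLT}) is that the block contains characters whose \emph{sum} is rational-valued, a slightly more flexible condition than the existence of a single $\hat\sigma$-fixed character.
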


\begin{proof} If $b$ has cyclic defect,  we apply Proposition \ref{prop:cyclicklein}. For the remaining cases, 
the checks done in \cite{F} in GAP \cite{GAP4}  show that $\hat \sigma(b) = b$.
\end{proof}

 Next we treat the exceptional covering groups  of simple groups. The list  in \cite{F} was incomplete so we include a full proof over $\cO$ for the complete list here.

\begin{prop}\label{prop:exceptionalcovers}
	Let $G$ be one of the following exceptional covering groups 
	\begin{quote}
		$2.L_2(4)$, $2.L_3(2)$, $2.L_3(4)$, $4_1.L_3(4)$, $4_2.L_3(4)$, $6.L_3(4)$, $4^2.L_3(4)$, $12_1.L_3(4)$, $12_2.L_3(4)$,  $(4^2 \times 3).L_3(4)$, $2.L_4(2)$, $2.U_4(2)$, $2.U_6(2)$,  \break $6.U_6(2)$, $2^2.U_6(2)$, $(2^2 \times 3).U_6(2)$, $3.A_6$, $6.A_6$, $2.S_6(2)$, $2.Sz(8)$, $2^2.Sz(8)$, $2.O_8^+(2)$, $2^2.O_8^+(2)$, $2.G_2(4)$, $2.F_4(2)$, $2.{^2\!E}_6(2)$,  $6.{^2\!E_6}(2)$, $2^2.{^2\!E}_6(2)$, $(2^2 \times 3).{^2\!E}_6(2)$,  $3_1.U_4(3)$, $3_2.U_4(3)$, $6_1.U_4(3)$, $6_2.U_4(3)$, $3^2.U_4(3)$, $12_1.U_4(3)$, $12_2.U_4(3)$, $(3^2 \times 4).U_4(3)$, $3.O_7(3)$, $6.O_7(3)$, $3.G_2(3)$, $3.A_7$, $6.A_7$.
	\end{quote}
	Then  $\cO  G b    \cong  \cO G \hat \sigma (b)$.
\end{prop}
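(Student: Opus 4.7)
The plan is to mirror the proof of Proposition~\ref{prop:sporadic}, while exploiting the observation (made explicit before Proposition~\ref{prop:unipotent}) that the character-theoretic arguments of \cite{F}, carried out originally over $k$, apply equally well over $\cO$: if the sum of the characters in some subset of $\textnormal{Irr}(b)$ is rational-valued, then $\hat\sigma(b)=b$, and \emph{a fortiori} $\cO G\hat\sigma(b)=\cO Gb$. So the objective reduces to showing, for each block $b$ of each group $G$ in the list, that either $\hat\sigma(b)=b$ or that some other structural argument yields the isomorphism.

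First I would dispense with the easy blocks. For any block $b$ whose defect groups are cyclic, or Klein four when $\ell=2$, Proposition~\ref{prop:cyclicklein} gives $\textit{sf}(\cO Gb)=1$; in particular $\hat\sigma(b)=b$ and there is nothing to prove. This leaves, for each group in the list, only the finitely many blocks of non-cyclic (and non-Klein-four in characteristic~$2$) defect.

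For the remaining blocks I would use a reduction followed by a direct computation. Many of the groups $G$ in the list are of the form $G=Z.H$ where $Z$ is a central $\ell'$-subgroup and $H$ is a quasi-simple group already handled by the results of Sections~\ref{sec:FGLT} and~\ref{sec:reesuz} (e.g.\ $G=6.U_6(2)$ has $Z=C_3$ central of order coprime to any relevant $\ell$, with quotient $2.U_6(2)$ of Lie type). Then Lemma~\ref{lem:domblocks}(ii) identifies $\cO Gb\cong\cO(G/Z)\bar b$ and commutes with $\hat\sigma$ by Lemma~\ref{lem:domblocks}(iv), so the statement descends from the quotient; when the quotient is a quasi-simple group of Lie type the assertion that $\cO(G/Z)\hat\sigma(\bar b)\cong\cO(G/Z)\bar b$ follows from the propositions of Section~\ref{sec:FGLT}. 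The blocks that survive this reduction are those of non-cyclic defect in the ``top" covering group itself, and for each of the finitely many pairs $(G,b)$ that remain, one verifies by a GAP \cite{GAP4} computation on the (known) character table of $G$ that the Galois orbit $\{{}^{\hat\sigma^m}\chi\mid\chi\in\Irr(b)\}$ coincides with $\Irr(b)$, equivalently $\hat\sigma(b)=b$, by checking that the sum of the characters in $\Irr(b)$ (or a suitable Galois-closed sub-sum) is rational-valued in the sense of Lemma~\ref{lem:charactergalois}.

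The conceptual content is therefore already contained in the earlier sections, and the main (purely) obstacle is organisational: compiling the correct list of exceptional covering groups (the list in \cite{F} being incomplete, as the authors note), and checking the non-cyclic-defect blocks of each group on the list. Since the list is finite and all relevant character tables are available in the GAP character table library, this is a mechanical rather than a conceptual difficulty, and the verification parallels that carried out in Proposition~\ref{prop:sporadic}.
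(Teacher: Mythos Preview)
Your proposal has a genuine gap in the final verification step. You claim that for each surviving pair $(G,b)$ one can check in GAP that $\hat\sigma(b)=b$ by finding a rational-valued sum of characters in $\Irr(b)$. But this is not always true: the paper's own proof shows that there are blocks $b$ with non-cyclic defect for which $\hat\sigma(b)\neq b$. In those cases the required isomorphism $\cO Gb\cong\cO G\hat\sigma(b)$ is obtained not by fixing the idempotent, but by exhibiting a group automorphism of $G$ (realised as conjugation by an element of an overgroup $\hat G$ with $G\unlhd\hat G$) that carries $b$ to $\hat\sigma(b)$. One checks in GAP that a single block $\hat b$ of $\hat G$ covers both $b$ and $\hat\sigma(b)$. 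Moreover, for the two extremal cases $(4^2\times 3).L_3(4)$ with $\ell=2$ and $(3^2\times 4).U_4(3)$ with $\ell=3$, even this fails directly: one must pass to $G/Z(G)_\ell$, find the relevant automorphism there, and then invoke the fact that every automorphism of a quasi-simple group lifts to its universal cover.

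There is also a problem with your reduction step. Your example of $6.U_6(2)$ with $Z=C_3$ and quotient $2.U_6(2)$ ``of Lie type'' is misleading: the generic Schur multiplier of $U_6(2)$ is $C_3$, so $2.U_6(2)$ is itself an exceptional cover and is \emph{not} of the form $\bG^F/Z$ treated in Section~\ref{sec:FGLT}. More generally, quotienting an exceptional cover by a central $\ell'$-subgroup often lands you on another group in the same list, not outside it. And even when the quotient \emph{is} handled by Section~\ref{sec:FGLT}, those results (for types $D$, $G_2$, $F_4$, $E_6$) only give $\cO(G/Z)\bar b\cong\cO(G/Z)\hat\sigma^r(\bar b)$ with $r\leq 2$, whereas the proposition asserts $r=1$. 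The paper's reduction (via Lemma~\ref{lem:domblocks}) is more modest: it only assumes $Z(G)_{\ell'}$ is cyclic, staying within the list, and then does the direct case-by-case check with the automorphism argument above.
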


\begin{proof}  
	We may assume that $b$ has non-cyclic defect groups. By Lemma \ref{lem:domblocks} we may also assume that $Z(G)_{\ell'} $ is cyclic. 
	
	We see from checking in GAP \cite{GAP4} that in most cases the collections of $\ell$-blocks of $G$ with equal, non-cyclic defect and the same number and degrees of characters, none of which are rational valued, have size 1. Where there exists such a collection of size greater than 1, it has size 2. Suppose that $b_1$ and $b_2$ are non-principal $\ell$-blocks of $G$ with equal, non-cyclic defect and the same number and degrees of characters, none of which are rational valued. Then in all but 2 situations, it is possible to check directly in GAP \cite{GAP4} that there exists a single block $\hat{b}$ of a finite group $\hat{G}$ with $G \unlhd \hat{G}$ such that $\hat{b}$ covers both $b_1$ and $b_2$. Hence $\cO G b_1 \cong \cO G b_2$  via conjugation by an element of $\hat G$.  
	
	Finally, suppose that we are in one of the two remaining cases --  that is, either $\ell = 2$ and $G =  (4^2 \times 3).L_3(4)$, or $\ell = 3$ and $ G =(3^2 \times 4).U_4(3)$. Suppose that $b$ is a non-principal $\ell$-block of $ G$ with non-cyclic defect groups which contains no rational valued character and let $\bar b$ be the unique block of $G/Z(G)_{\ell}$ dominated by $b$. Then if $\ell=2  $ and $G =  (4^2 \times 3).L_3(4)$, $\bar b $ is one of two $2$-blocks  of $3.L_3(4)$ which are faithful  on  the  Sylow $3$-subgroup of  $G/Z(G)$. If $\ell=3  $ and $ G =  (3^2 \times 4).U_4(3) $ then  $\bar b $  is  one of two   $3$-blocks  of  $4.U_4(3) $ which are faithful  on  the   Sylow  $2$-subgroup of $G/Z(G)$. In both cases, by  a GAP  check  there exists a single block $\hat{b}$ of a finite group $\hat{G}$ with $G/Z(G)_{\ell}  \unlhd \hat{G}$ covering both the relevant blocks of $G/Z(G)_{\ell} $.  In other words, there exists  an automorphism of  $G/Z(G)_{\ell} $ sending $\bar b $ to   $\hat \sigma (\bar b) $.   Further in both cases, $G$   is  a universal covering group of $G/Z(G)_{\ell} $.   Since  every  automorphism of a  quasisimple finite group lifts to  an automorphism of its universal covering group (see Section 5.1  of  \cite{G/L/SIII}), it follows from Lemma  \ref{lem:domblocks} that $\cO G b \cong \cO G \hat \sigma (b)$. 
\end{proof}

\begin{prop}\label{prop:alt}
	Let $G$ be a quasi-simple finite group such that $G/Z(G)$ is an alternating group.  Then $\cO G  b \cong \cO G \hat \sigma ( b) $.
\end{prop}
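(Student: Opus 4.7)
My plan is to follow the template used throughout Sections~\ref{sec:FGLT}--\ref{sec:analogues}: reduce via the results already proved to a short list of cases, then use a well-chosen overgroup in which characters or entire blocks are visibly $\hat\sigma$-stable, and transfer the conclusion down to $G$ using the covering and domination arguments of Lemma~\ref{lem:coveringsameblock} and Lemma~\ref{lem:domblocks}. By Proposition~\ref{prop:cyclicklein} I may assume that $b$ has defect group neither cyclic nor (for $\ell=2$) Klein four. Since the Schur multiplier of $A_n$ is $C_2$ for $n \geq 5$ with $n \notin \{6,7\}$, and the exceptional covers $3.A_{6,7}$ and $6.A_{6,7}$ are already handled by Proposition~\ref{prop:exceptionalcovers}, it remains to treat $G = A_n$ and $G = 2.A_n$ for $n \geq 5$.

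For $G = A_n$, the classical rationality of the ordinary irreducible characters of $S_n$ implies via Lemma~\ref{lem:charactergalois} that every block of $\cO S_n$ is fixed by $\hat\sigma$. Letting $B$ be the unique block of $\cO S_n$ covering $b$, two cases arise. If $B$ covers $b$ alone, then $\Irr(b) = \Irr(A_n \,|\, \Irr(B))$ is $\hat\sigma$-stable, giving $\hat\sigma(b) = b$. Otherwise $B$ covers exactly two blocks $b, b'$ of $\cO A_n$ that are swapped by conjugation by any $g \in S_n \setminus A_n$; then $\hat\sigma(b) \in \{b, b'\}$, and the required isomorphism $\cO A_n b \cong \cO A_n \hat\sigma(b)$ is either the identity or inner conjugation by $g$.

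For $G = 2.A_n$ I split by the parity of $\ell$. When $\ell = 2$ the central subgroup $Z = Z(G) \cong C_2$ is an $\ell$-group, so Lemma~\ref{lem:domblocks}(iii) gives a canonical bijection between blocks of $\cO G$ and blocks of $\cO A_n$, which by Lemma~\ref{lem:domblocks}(iv) commutes with $\hat\sigma$. The already settled $A_n$ case then produces either the equality $\hat\sigma(b) = b$, or an isomorphism $\cO A_n \bar b \cong \cO A_n \hat\sigma(\bar b)$ realised by conjugation by some $g \in S_n \setminus A_n$; I would lift $g$ to an element $\tilde g$ of a double cover $\widetilde S_n$ containing $G$, and tracking domination shows that conjugation by $\tilde g$ supplies the desired isomorphism $\cO G b \cong \cO G \hat\sigma(b)$. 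When $\ell$ is odd, $Z$ is an $\ell'$-group; for blocks $b$ of $\cO G$ with non-zero image in $\cO A_n$ Lemma~\ref{lem:domblocks}(ii) yields an actual isomorphism of $\cO$-algebras to the dominated block, and again the $A_n$ case applies.

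The remaining cases are the faithful (spin) blocks of $2.A_n$ in odd characteristic. Here I would embed $G$ in a double cover $\widetilde S_n$ of $S_n$ containing $2.A_n$ as an index-two subgroup and rerun the covering/conjugation argument used for $A_n \subset S_n$. The single missing ingredient is that every spin block $B$ of $\cO \widetilde S_n$ is $\hat\sigma$-stable, and this is where I expect the main obstacle to lie: unlike in $S_n$, individual spin characters of $\widetilde S_n$ are not in general rational-valued. I would extract the stability at block level from the combinatorial classification of spin $\ell$-blocks of $\widetilde S_n$ for odd $\ell$ via $\bar\ell$-cores of strict partitions (following Humphreys and Cabanes), by observing that the $\hat\sigma$-action on faithful characters permutes the characters attached to each $\bar\ell$-core. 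This is essentially the content of the Morita Frobenius computation for covers of alternating groups carried out over $k$ in \cite{F}, and should lift without change to $\cO$. Once this step is in place, the proof closes exactly as in the $A_n \subset S_n$ case.
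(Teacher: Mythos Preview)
Your approach is correct and is at heart the same as the paper's: pass to an overgroup $\hat G$ with $G \lhd \hat G$ and $\hat G/Z(\hat G)$ a symmetric group, use that every block $\hat b$ of $\cO\hat G$ with non-cyclic defect satisfies $\hat\sigma(\hat b)=\hat b$ (this is the content of \cite[Theorem~3.1]{F}, which you invoke in the spin case), and conclude that $b$ and $\hat\sigma(b)$ are both covered by $\hat b$, hence are conjugate by an element of $\hat G$.

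The difference is purely organisational. The paper carries out this argument in a single paragraph, uniformly for all quasi-simple $G$ with $G/Z(G)$ alternating: it picks $\hat G$ once, cites \cite[Theorem~3.1]{F} for $\hat\sigma(\hat b)=\hat b$, and is done. Your case split by $G=A_n$ versus $2.A_n$, by parity of $\ell$, and by faithfulness of $b$, together with the detours through Lemma~\ref{lem:domblocks}(ii),(iii),(iv), is unnecessary: the covering argument via $\hat G$ already handles $2.A_n$ for both parities of $\ell$ and for both faithful and non-faithful blocks, and even the exceptional covers $3.A_{6,7}$, $6.A_{6,7}$, since a suitable $\hat G$ exists in every case. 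In particular you never need to reduce $2.A_n$ to $A_n$ and lift back. What your longer argument buys is a more self-contained treatment of the $S_n$ and $A_n$ cases (via rationality of $S_n$-characters) without citing \cite{F}; but since you cite \cite{F} anyway for the spin blocks of $\widetilde S_n$, you may as well cite it uniformly and shorten the proof to the paper's three lines.
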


\begin{proof}
	Let $\hat G$ denote a finite group such that $G \lhd \hat G$ and $\hat G / Z(\hat G)$ is a symmetric group, and let $\hat b$ denote a block of $\cO \hat G$ covering $b$. Since we may assume that  $b$  and hence  $\hat b$ does not have cyclic defect groups,  by Lemma~\ref{lem:domblocks} (i) and the proof of \cite[Theorem 3.1]{F}, $\hat \sigma (\hat b) = \hat b$. It follows that $b$ and $\hat \sigma(b)$ are both covered by $\hat b$ and therefore $\cO G b \cong \cO G \hat \sigma(b)$  via   conjugation by an element of $\hat G$. 
\end{proof}

\section{Proof of main theorems}\label{sec:proof}

Theorem~\ref{thm:newmaintheorem} is part of the following result.

\begin{thm}\label{thm:overallresult} 
	Let $G$ be a quasi-simple finite group and let $\bar{G} = G/Z(G)$. Let $b$ be a block of $\cO G$  and let $D$ be a defect group of $\cO Gb$. There exists  a finite group $N$ and a block $c$ of $\cO N$  such that   $ \cO Gb$ is Morita equivalent to  $\cO Nc $  and  $\cO Nc \cong  \cO N \hat \sigma^r (c)$  for some $r \leq  4 $.   Moreover,	
	\begin{enumerate}[(i)]
		\item if $\bar{G}$ is not a finite group of Lie type of type $E_8$ in characteristic $p \neq \ell$ then $r \leq  2 $, and 
		\item if $\bar{G} $ is a sporadic group, an alternating group, a finite group of Lie type in characteristic $\ell$ or a finite group of Lie type of type $A$, $B$ or $C$ in characteristic $p \neq \ell$, then $r = 1$.	
	\end{enumerate}
	Consequently, $\textit {mf }\!(kG\pi(b) ) \leq \textit{mf }\!(\cO Gb) \leq 4 $  and  $ \textit{sf }\!(\cO G b) \leq 4 |D|^2 ! $.    Unless $\bar{G}$ is a finite group of Lie type of type $E_8$ in characteristic $p \neq \ell$, $\textit{mf }\!(\cO Gb)  \leq 2$. If $\bar{G}$ is one of the groups in (ii), then  $\textit {mf }\!(\cO Gb)  =1 $.  
\end{thm}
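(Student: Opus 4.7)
The plan is to run a case analysis over the possible isomorphism types of $\bar G = G/Z(G)$ using the classification of finite simple groups, invoking in each case the appropriate result from Sections~\ref{sec:FGLT}, \ref{sec:reesuz} or \ref{sec:analogues}, and finally reading off the Frobenius number bounds from Proposition~\ref{prop:ELresults}. As a preliminary step, I would eliminate the trivial cases via Proposition~\ref{prop:cyclicklein}: if the defect groups of $b$ are cyclic, or if $\ell = 2$ and they are Klein four, one takes $N = G$, $c = b$, $r = 1$.

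For $\bar G$ a sporadic simple group or the Tits group, an exceptional covering group, an alternating group, or a finite group of Lie type in defining characteristic, I would take $N = G$ and $c = b$ and apply Propositions~\ref{prop:sporadic}, \ref{prop:exceptionalcovers}, \ref{prop:alt}, and \ref{prop:defchar} respectively to conclude $\cO G b \cong \cO G \hat\sigma(b)$, giving $r = 1$. The Suzuki and Ree groups in non-defining characteristic are disposed of by Proposition~\ref{prop:suzukiree}: for $\tw{2}B_2$ and $\tw{2}G_2$ one again has $r = 1$ with $N = G$, while for $\tw{2}F_4$ the proposition produces a Levi Morita correspondent with $r \leq 2$.

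The main bulk is the case where $\bar G$ is a finite group of Lie type in non-defining characteristic $p \neq \ell$. Here I would first reduce to the simply-connected case: writing $G$ as a central quotient of $\bG_{sc}^F$ and splitting the centre into its $\ell'$- and $\ell$-parts, Lemma~\ref{lem:domblocks}(ii),(iii) lets one identify $\cO G b$ with a dominated block $\bar b$ of $\cO(\bG_{sc}^F/Z')$ for some $\ell$-subgroup $Z' \leq Z(\bG_{sc}^F)$. With $\bG$ simple and simply-connected, I would apply the Bonnaf\'{e}--Dat--Rouquier machinery (Theorem~\ref{thm: BDR} and Proposition~\ref{prop:modcentralBDR}): for a semisimple $\ell'$-element $s \in \bG^{*F}$ with $b \in \cE_\ell(\bG^F, s)$, assuming $N^F/\bL^F$ is cyclic, $\bar b$ is Morita equivalent to a block $\bar{b'}$ of $\cO(N^F/Z')$. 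I would then set $N = N^F/Z'$, $c = \bar{b'}$, and apply the appropriate one of Propositions~\ref{prop:unipotent}, \ref{prop:ABC}, \ref{prop:D}, \ref{prop:G2F4E6}, \ref{prop:E7}, \ref{prop:E81} according to the Dynkin type of $\bG$. For types $A$, $B$, $C$ Proposition~\ref{prop:ABC} gives $r = 1$; the case in which $N^F/\bL^F$ fails to be cyclic (only type $D_m$ with $m$ even, necessarily $\ell$ odd) is already absorbed into Proposition~\ref{prop:D}(ii), giving $N = G$ and $r \leq 2$.

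The main obstacle, and the only source of the bound $r \leq 4$, is the short list of isolated semisimple elements $s \in \bG^{*F}$ for $\bG$ of type $E_8$ whose centraliser has an exceptional component; these lie outside the hypothesis of Proposition~\ref{prop:E81}, and one cannot descend via BDR to a strictly smaller Levi. Here I would invoke Proposition~\ref{prop:E82} directly (with $N = G$, $c = b$), where the size of the Galois orbit $\mathcal{C}_b$ is bounded by a case-by-case analysis of $e$-cuspidal pairs in tandem with Lemma~\ref{lem:orders}; combined with $r_{\bG^{*F}}(s) \leq 2$ for such $s$, this yields the desired $r \leq 4$. Once the existence of $N, c$ and the case-by-case bound on $r$ is in hand, the Frobenius number bounds $\textit{mf}(\cO G b) = \textit{mf}(\cO N c) \leq r \leq 4$ and $\textit{sf}(\cO G b) = \textit{sf}(\cO N c) \leq |D|^2! \cdot f(\cO N c) \leq 4|D|^2!$ follow immediately from parts (i), (ii) and (iii) of Proposition~\ref{prop:ELresults}, with the finer refinements in (i) and (ii) of the theorem tracking the refinements of $r$ in the case analysis.
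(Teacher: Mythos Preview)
Your proposal is correct and follows essentially the same case analysis as the paper's proof: the same reduction to $\bG^F/Z$ with $Z$ an $\ell$-group via Lemma~\ref{lem:domblocks}, the same invocation of Propositions~\ref{prop:unipotent}--\ref{prop:E82} by Dynkin type (with the non-cyclic $N^F/\bL^F$ case in type $D$ handled internally by Proposition~\ref{prop:D}(ii)), and the same derivation of the Frobenius number bounds from Proposition~\ref{prop:ELresults}. The preliminary elimination of cyclic and Klein four defect via Proposition~\ref{prop:cyclicklein} is harmless but unnecessary here, since those cases are already absorbed into the individual propositions you cite.
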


\begin{proof}  
	We first consider the claim that there exists a finite group $N$ and a block $c$ of $\cO N$ such that $\cO Gb$ is Morita equivalent to $\cO N c$ and $\cO N c \cong \cO N \hat \sigma^r(c)$ with bounds for $r$ as in the statement. For $G$  an exceptional covering group  or  $\bar{G}$  an alternating group, a sporadic group, the Tits group, or a finite group of Lie type in characteristic $\ell$ (including the Suzuki and Ree groups), then the result holds with $N=G$ and $c=b$ by   Propositions~\ref{prop:exceptionalcovers}, \ref{prop:sporadic}, \ref{prop:alt}, and \ref{prop:defchar} respectively. Proposition~\ref{prop:suzukiree}  proves the   result  for $\bar G$ a  Suzuki  or Ree group   in characteristic $p \neq \ell$. Finally,   suppose that  $G$ is a    non-exceptional cover    of a finite group of Lie type  in characteristic $p \neq \ell$. Then  $G  =\bG^F/Z$   for $ \bG$  a simple simply-connected algebraic group   over  $ \overline \FF_p $, $ F :\bG \to \bG $ a Frobenius morphism and   $Z$ a central subgroup of $\bG^F$. Hence  the result holds by Propositions~\ref{prop:unipotent}, \ref{prop:ABC}, \ref{prop:D}, \ref{prop:G2F4E6}, \ref{prop:E7}, \ref{prop:E81}  and \ref{prop:E82}. Note that although we assume  in Section~\ref{sec:FGLT}  that $Z$ is an $\ell$-group, this is enough to show that the results hold in general by Lemma~\ref{lem:domblocks}.  This proves the first part of the theorem.

	Now $\textit{f }\!(\cO Nc) \leq r $. Hence,  
	\[ \textit{mf }\!(kG \pi(b) ) \leq \textit{mf }\!(\cO Gb)  = \textit{mf }\!(\cO Nc)  \leq \textit{f }\!(\cO Nc) \leq r, \]
	where the first   and second inequalities hold by    Proposition~\ref{prop:ELresults} (i), and the equality holds by  Proposition~\ref{prop:ELresults} (ii) and the fact that $\cO Gb$ and $\cO Nc$ are Morita equivalent. This proves the assertion about Morita Frobenius numbers.

	Finally, since   Morita equivalence between  blocks of finite group algebras preserves orders of defect groups,  the defect groups of  $\cO N c$  have order $|D|$, hence  
	\[\textit{sf }\!(\cO  Gb)     =  \textit{sf }\!(\cO  Nc)  \leq   \textit{f }\!(\cO Nc) |D|^2! \leq r |D|^2!  \leq 4   |D|^2!\]
	where the equality holds by Proposition~\ref{prop:ELresults}  (iii) and the first inequality holds by  Proposition~\ref{prop:ELresults}  (i). 
\end{proof}

Proof of Theorem~\ref{thm:speciallinear}:
\begin{proof}  
	The result follows from the first part of Theorem~\ref{thm:newmaintheorem}, \cite[Theorem 8.6]{H/K2} and    \cite[Theorem~1.4]{K}.
\end{proof}

Proof of Theorem~\ref{thm:sideshow}:
\begin{proof}
	This follows from the first part of Theorem~\ref{thm:newmaintheorem} and Lemma~\ref{lem:definedoverFl2}.
\end{proof}

\bibliographystyle{acm}
\bibliography{BIBLIOGRAPHY}

\end{document}